\documentclass[12pt,reqno]{amsart}
\usepackage{amsmath,a4wide}
\usepackage{xfrac}
\usepackage{stmaryrd,mathrsfs,bm,amsthm,mathtools,yfonts,amssymb,color}
\usepackage{xcolor}
\usepackage{courier}

\begingroup
\newtheorem{theorem}{Theorem}[section]
\newtheorem{lemma}[theorem]{Lemma}
\newtheorem{proposition}[theorem]{Proposition}
\newtheorem{corollary}[theorem]{Corollary}
\endgroup

\theoremstyle{definition}
\newtheorem{definition}[theorem]{Definition}
\newtheorem{remark}[theorem]{Remark}
\newtheorem{ipotesi}[theorem]{Assumption}

\numberwithin{equation}{section}
\numberwithin{subsection}{section}

\newcommand{\dv}{{\textup {div}}}
\newcommand{\ph}{\varphi}

\newcommand{\sing}{{\rm Sing}}
\newcommand{\reg}{{\rm Reg}}
\newcommand\supp{{\rm spt}}
\newcommand\res{\mathop{\hbox{\vrule height 7pt width .3pt depth 0pt
\vrule height .3pt width 5pt depth 0pt}}\nolimits}
\newcommand\ser{\mathop{\hbox{\vrule height .3pt width 5pt depth 0pt
\vrule height 7pt width .3pt depth 0pt}}\nolimits}

\newcommand{\mass}{{\mathbf{M}}}
\newcommand{\cone}{{\times\hspace{-0.6em}\times\,}}
\newcommand{\bOmega}{{\mathbf{\Omega}}}
\newcommand{\de}{\partial}
\newcommand{\bI}{{\mathbf{I}}}
\def\a#1{\left\llbracket{#1}\right\rrbracket}
\newcommand{\bE}{{\mathbf{E}}}
\newcommand{\gr}{{\rm Gr}}

\newcommand{\im}{{\rm Im}}
\newcommand{\bT}{\mathbf{T}}
\newcommand{\cH}{{\mathcal{H}}}

\def\I#1{{\mathcal{A}}_{#1}}
\newcommand{\Iq}{{\mathcal{A}}_Q}
\newcommand{\Iqs}{{\mathcal{A}}_Q(\R^{n})}
\newcommand{\bG}{{\mathbf{G}}}
\newcommand{\cG}{{\mathcal{G}}}
\newcommand{\etaa}{{\bm{\eta}}}
\newcommand{\D}{\textup{Dir}}
\newcommand{\card}{{\textrm{card}}}


\newcommand{\bB}{{\mathbf{B}}}
\newcommand{\bC}{{\mathbf{C}}}

\def\Xint#1{\mathchoice
{\XXint\displaystyle\textstyle{#1}}%
{\XXint\textstyle\scriptstyle{#1}}%
{\XXint\scriptstyle\scriptscriptstyle{#1}}%
{\XXint\scriptscriptstyle\scriptscriptstyle{#1}}%
\!\int}
\def\XXint#1#2#3{{\setbox0=\hbox{$#1{#2#3}{\int}$ }
\vcenter{\hbox{$#2#3$ }}\kern-.6\wd0}}
\def\mint{\Xint-}



\newcommand\N{{\mathbb N}}

\newcommand\C{{\mathbb C}}
\newcommand\R{{\mathbb R}}

\newcommand{\eps}{{\varepsilon}}
\newcommand{\bA}{\mathbf{A}}

\newcommand{\Lip}{{\rm {Lip}}}

\newcommand{\p}{{\mathbf{p}}}

\renewcommand{\P}{{\mathbf{P}}}

\newcommand{\gira}{{\mathfrak{B}}}
\newcommand{\disco}{{\mathfrak{D}}}
\newcommand{\bV}{{\mathbf{V}}}


\newcommand\bmo{{\bm m}_0}

\newcommand{\cM}{{\mathcal{M}}}

\newcommand{\bL}{{\mathbf{L}}}
\newcommand{\phii}{{\bm{\varphi}}}
\newcommand{\Phii}{{\bm{\Phi}}}
\newcommand{\bc}{{\bm{c}}}

\newcommand{\cC}{{\mathcal{C}}}


\newcommand{\bW}{{\mathbf{W}}}

\newcommand{\Psii}{{\bm{\Psi}}}
\newcommand{\bK}{{\mathbf{K}}}
\newcommand{\bD}{{\mathbf{D}}}
\newcommand{\bH}{{\mathbf{H}}}
\newcommand{\bbI}{{\mathbf{\bar I}}}
\newcommand{\bF}{{\mathbf{F}}}
\newcommand{\bSigma}{{\mathbf{\Sigma}}}

\newcommand{\bLambda}{{\mathbf{\Lambda}}}

\DeclareMathAlphabet{\mathpzc}{OT1}{pzc}{m}{it}
\newcommand{\NN}{{\mathpzc{N}}}
\newcommand{\FF}{{\mathpzc{F}}}
\newcommand{\LL}{{\mathpzc{L}}}
\newcommand{\GG}{{\mathpzc{G}}}
\newcommand{\HH}{{\mathpzc{H}}}
\newcommand{\UU}{{\mathpzc{U}}}
\newcommand{\VV}{{\mathpzc{V}}}

\newcommand{\Thetaa}{{\bm{\Theta}}}


\title[$2$-dimensional regularity theory]{Regularity theory for $2$-dimensional almost minimal currents III: blowup}
\author{Camillo De Lellis, Emanuele Spadaro and Luca Spolaor}

\begin{document}

\begin{abstract}
We analyze the asymptotic behavior of a $2$-dimensional integral current which is almost minimizing in a suitable sense at a singular point. Our analysis is the second half of an argument which shows the discreteness of
the singular set for the following three classes of $2$-dimensional currents: area minimizing in Riemannian manifolds, semicalibrated and spherical cross sections of $3$-dimensional
area minimizing cones.
\end{abstract}

\maketitle

This paper is the fourth and last in a series of works aimed at establishing an optimal regularity theory
for $2$-dimensional integral currents which are almost minimizing in a suitable sense. Building upon the monumental
work of Almgren \cite{Alm}, Chang in \cite{Chang} established that $2$-dimensional area-minimizing currents in
Riemannian manifolds are classical minimal surfaces, namely they are regular (in the interior) except for a discrete
set of branching singularities. The argument of Chang is however not entirely complete since a key starting point 
of his analysis, the existence of the so-called ``branched center manifold'', is only sketched in the appendix of
\cite{Chang} and requires the understanding (and a suitable modification) of the most involved portion of the monograph \cite{Alm}. 

An alternative proof of Chang's theorem has been found by Rivi\`ere and Tian in \cite{RT1} for the special case
of $J$-holomorphic curves. Later on the approach of Rivi\`ere and Tian has been generalized by Bellettini and Rivi\`ere
in \cite{BeRi} to handle a case which is not covered by \cite{Chang}, namely that of special Legendrian cycles in
$\mathbb S^5$.

Meanwhile the first and second author revisited Almgren's theory giving a much shorter version of his program
for proving that area-minimizing currents are regular up to a set of Hausdorff codimension $2$, cf. \cite{DS1,DS2,DS3,DS4,DS5}.
In this note and its companion papers \cite{DSS2,DSS3} we build upon the latter works in order to give a
complete regularity theory which includes both the theorems of Chang and Bellettini-Rivi\`ere as special cases. 
In order to be more precise, we introduce the following terminology (cf. \cite[Definition 0.3]{DSS1}).

\begin{definition}\label{d:semicalibrated}
Let $\Sigma \subset R^{m+n}$ be a $C^2$ submanifold and $U\subset \R^{m+n}$ an open set.
\begin{itemize}
  \item[(a)] An $m$-dimensional integral current $T$ with finite mass and $\supp (T)\subset \Sigma\cap U$ is area-minimizing in $\Sigma\cap U$
if $\mass(T + \partial S)\geq \mass(T)$ for any $m+1$-dimensional integral current $S$ with $\supp (S) \subset \subset \Sigma\cap U$.
 \item[(b)] A semicalibration (in $\Sigma$) is a $C^1$ $m$-form $\omega$ on $\Sigma$ such that 
  $\|\omega_x\|_c \leq 1$ at every $x\in \Sigma$, where $\|\cdot \|_c$ denotes the comass norm on $\Lambda^m T_x \Sigma$. 
  An $m$-dimensional integral current $T$ with $\supp (T)\subset \Sigma$ is {\em semicalibrated} by $\omega$ if $\omega_x (\vec{T}) = 1$ for $\|T\|$-a.e. $x$.
  \item[(c)]  An $m$-dimensional integral current $T$ supported in $\partial \bB_{\bar R} (p) \subset \R^{m+n}$ is a {\em spherical cross-section of an area-minimizing cone} if ${p\cone T}$ is area-minimizing. 
\end{itemize}
\end{definition}

In what follows, given an integer rectifiable current $T$, we denote by $\reg (T)$ the subset of $\supp (T)\setminus \supp (\partial T)$ consisting of those points $x$ for which there is a neighborhood $U$ such that $T\res U$ is a (costant multiple of) a regular submanifold. Correspondingly, $\sing (T)$ is
the set $\supp (T)\setminus (\supp (\partial T)\cup \reg (T))$. Observe that $\reg (T)$ is relatively open in $\supp (T) \setminus \supp (\partial T)$
and thus $\sing (T)$ is relatively closed.
The main result of this and the works \cite{DSS2,DSS3} is then the following

\begin{theorem}\label{t:finale}
Let $m=2$ and $T$ be as in (a), (b) or (c) of Definition \ref{d:semicalibrated}. Assume in addition that
$\Sigma$ is of class $C^{3,\eps_0}$ (in case (a) and (b)) and $\omega$ of class $C^{2,\eps_0}$ (in case (b)) for some positive $\eps_0$. Then $\sing (T)$
is discrete.
\end{theorem}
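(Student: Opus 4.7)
Fix $p_0 \in \sing(T)$; I will show $p_0$ is isolated in $\sing(T)$. By the preliminary analysis in \cite{DSS1}, every tangent cone of $T$ at $p_0$ is an integer multiplicity $2$-plane; if the multiplicity equals $1$ then Allard's theorem delivers regularity at $p_0$, so I may assume $\Theta(T,p_0)=Q\geq 2$ and that every tangent cone has the form $Q\,\a{\pi_0}$ for a $2$-plane $\pi_0$. I then import the branched center manifold constructed in \cite{DSS2}: a $C^{3,\alpha}$ surface $\cM$ passing through $p_0$ with a branch point there, together with a $Q$-valued normal approximation $N:\cM\to \Iq$ whose graph matches $T$ to superlinear order near $p_0$ and which satisfies the $Q$-valued Dirichlet Euler--Lagrange system up to a superlinear error.

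On $\cM_r:=\cM\cap \bB_r(p_0)$ introduce
\[
H(r)=\int_{\partial \cM_r}|N|^2,\qquad D(r)=\int_{\cM_r}|DN|^2,\qquad I(r)=\frac{rD(r)}{H(r)},
\]
and, exploiting almost Dir-minimality of $N$ together with the $C^{3,\alpha}$ regularity of $\cM$, prove an almost-monotonicity of the form $(\log I)'(r)\geq -Cr^{\gamma-1}$. This yields a limit $I_0:=\lim_{r\to 0^+}I(r)\geq 1$, the strict lower bound being a consequence of $p_0\in\sing(T)$. Set $N_r(x):=h(r)^{-1}N(rx)$ with $h(r)^2:=r^{-1}H(r)$; a compactness argument based on the almost Dir-minimality extracts, along any subsequence $r_k\downarrow 0$, a nontrivial $I_0$-homogeneous Dir-minimizer $u_\infty:\R^2\to \Iq$ with $\etaa\circ u_\infty\equiv 0$.

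The central obstacle is to upgrade the subsequential limit into uniqueness of $u_\infty$. My plan is to prove a Lojasiewicz-type decay
\[
|I(r)-I_0|\leq C\,r^{\delta}
\]
and to differentiate the normalized rescalings to obtain an estimate of the form $\|\partial_t N_{e^{-t}}\|_{L^2(\sphere^1)}^2 \lesssim |I(e^{-t})-I_0|$, which, combined with the Lojasiewicz decay, yields a Cauchy-type bound on $N_r$ over dyadic annuli and forces the full limit $\lim_{r\to 0^+} N_r = u_\infty$. The proof rests on a frequency-gap principle for planar $Q$-valued Dir-minimizers (the admissible homogeneity degrees are discrete and the linearized operator at $u_\infty$ has a spectral gap transverse to the finite-dimensional family of $I_0$-homogeneous minimizers) together with a sharp differential inequality for $I(r)$ quantifying the deviation of $N_r$ from that family. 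The delicacy lies in absorbing the branch of $\cM$, the superlinear (but not vanishing) error terms in the perturbed Euler--Lagrange system, and in making the estimate uniform down to $r=0$.

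Once $u_\infty$ is shown to be unique, the rigidity of homogeneous $Q$-valued Dir-minimizers closes the argument. A nontrivial $I_0$-homogeneous $Q$-valued Dir-minimizer on $\R^2$ with vanishing average is an algebraic branch: its singular set is exactly $\{0\}$ and distinct sheets separate at the precise rate $r^{I_0}$. If $q_k\in \sing(T)\setminus\{p_0\}$ accumulated at $p_0$, set $r_k:=|q_k-p_0|$ and consider the rescaled singular points $(q_k-p_0)/r_k$; up to a subsequence they converge to some $q_\infty$ with $|q_\infty|=1$, which a standard graph/approximation argument lifts to a singularity of $u_\infty$ at $q_\infty\neq 0$, contradicting the rigidity above. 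Hence $p_0$ is isolated in $\sing(T)$ and Theorem \ref{t:finale} follows.
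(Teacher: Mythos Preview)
Your overall strategy parallels the paper's in its first stages, but the final contradiction step contains a genuine gap---precisely the gap that the paper's \emph{inductive} structure (Theorem~\ref{t:induttivo}) is designed to close. The problem is the claim that a sequence $q_k\in\sing(T)$ with $q_k\to p_0$ would, after rescaling by $r_k=|q_k-p_0|$, force a singularity of $u_\infty$ at some $q_\infty\neq 0$. This is false in general: the homogeneous blow-up $u_\infty$ may decompose as $\frac{Q}{Q'}\sum_j \a{h_j}$ with $Q'<Q$, i.e.\ $Q'$ distinct sheets each carrying multiplicity $Q/Q'\geq 2$. Away from the origin $u_\infty$ is then a perfectly smooth $Q'$-valued graph, yet the current $T$, once localized near one such sheet at scale $r_k$, is an almost-minimizer of density $Q/Q'\geq 2$ and can carry its own branch points---singularities corresponding to higher-order sheet collisions that are invisible at the scale $r_k^{I_0}$ of the first blow-up. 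No ``standard graph/approximation argument'' lifts $q_k$ to a singularity of $u_\infty$, and the contradiction does not go through.

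The paper handles exactly this scenario by iteration: the unique blow-up is used not to conclude regularity directly, but to build a \emph{new} admissible $\bar Q Q'$-branching (with $\bar Q Q' > \bar Q$) whose horned neighborhood traps $\supp(T)$; one then restarts the entire machinery---center manifold, normal approximation, frequency, blow-up---relative to this finer branching (Theorem~\ref{t:induttivo}(b) and Section~\ref{s:2_pezzi}). Since the branching order is bounded above by $\Theta(T,p_0)$, the iteration terminates, and at termination alternative (a) of Theorem~\ref{t:induttivo} yields regularity in a punctured neighborhood. Your single-step argument would succeed only when $Q'=Q$ already at the first blow-up, which cannot be guaranteed a priori. (A secondary remark: the paper obtains the decay $|\bI(r)-I_0|\leq Cr^\lambda$ not via a Lojasiewicz inequality but through an explicit harmonic competitor and the inner/outer variation identities, Sections~\ref{s:harmonic}--\ref{s:decay}; your route is a plausible alternative for that step but does not by itself repair the structural gap above.)
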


Clearly Chang's result is covered by case (a). As for the case of special Lagrangian cycles considered by Bellettini and Rivi\`ere in
\cite{BeRi} observe that they form a special subclass of both (b) and (c). Indeed these cycles arise as spherical cross-sections of $3$-dimensional special lagrangian cones: as such they are then spherical cross sections of area-minimizing cones but they are also semicalibrated by a specific smooth form
on $\mathbb S^5$.

Following the Almgren-Chang program, Theorem \ref{t:finale} will be established through a suitable ``blow-up argument'': this argument is presented here but requires
several tools. The first important tool is the theory of multiple-valued functions, for which we
will use the results and terminology of the papers \cite{DS1,DS2}. The second tool is a suitable approximation result for area-minimizing currents with graphs of multiple valued functions, which for the case at hand has been established in the preceding
note \cite{DSS2}. 
The last tool is the so-called ``branched center manifold'': this has been constructed in the paper \cite{DSS3}. We note in passing that all our arguments use heavily the uniqueness
of tangent cones for $T$. This result is a, by now classical, theorem of White for area-minimizing $2$-dimensional currents in the euclidean
space, cf. \cite{Wh}. Chang extended it to case (a) in the appendix of \cite{Chang}, whereas Pumberger and Rivi\`ere covered case (b) in \cite{PuRi}. 
A general derivation of these results for a wide class of almost minimizers has been given in \cite{DSS1}: the theorems in there
cover, in particular, all the cases of Definition \ref{d:semicalibrated}.

The proof of Theorem \ref{t:finale} is based, as in \cite{Chang}, on an induction statement, cf. Theorem \ref{t:induttivo} below. Although Theorem \ref{t:induttivo} is already stated in \cite{DSS3}, we will recall it in the next section, where we also show how it implies
Theorem \ref{t:finale}. This and the previous paper \cite{DSS3} can be hence thought as the two halves in the proof of Theorem \ref{t:induttivo}.  After introducing some terminology, in Section \ref{s:2_pezzi} we will state the first half in Theorem \ref{t:cm} (which is proved in \cite{DSS3}) and the second half in Theorem \ref{t:bu}, and we will then show how they fit together to prove Theorem \ref{t:induttivo}. The remaining sections (the biggest portion of this note) are then dedicated to prove Theorem \ref{t:bu}.

\smallskip

{\bf Acknowledgments}  The research of Camillo De Lellis and Luca Spolaor has been supported by the ERC grant RAM (Regularity for Area Minimizing currents), ERC 306247.

\section{Preliminaries and the main induction statement}

\subsection{Basic notation and first main assumptions}
For the notation concerning submanifolds
$\Sigma\subset \R^{2+n}$ we refer to \cite[Section 1]{DS3}. With $\bB_r (p)$ and $B_r (x)$ we denote, respectively, the open ball with radius $r$ and center $p$ in $\mathbb R^{2+n}$ and the open ball with radius $r$ and center $x$ in $\mathbb R^2$.  $\bC_r (p)$ and $\bC_r (x)$ will always denote the cylinder $B_r (x)\times \mathbb R^n$, where $p=(x,y)\in \R^2\times \R^n$. We will often need to consider cylinders whose bases are parallel to other $2$-dimensional planes, as well as balls in $m$-dimensional affine planes. We then introduce the notation $B_r (p, \pi)$ for $\bB_r (p) \cap (p+\pi)$ and $\bC_r (p, \pi)$ for $B_r (p, \pi) + \pi^\perp$. 
$e_i$ will denote the unit vectors in the standard basis,
$\pi_0$ the (oriented) plane $\R^2\times \{0\}$ and $\vec \pi_0$
the $2$-vector $e_1\wedge e_2$ orienting it.
Given an $m$-dimensional plane $\pi$, we denote by $\p_\pi$ and $\p_\pi^\perp$ the
orthogonal projections onto, respectively, $\pi$ and its orthogonal complement $\pi^\perp$. 
For what concerns integral currents we use the definitions and the notation of \cite{Sim}. Since $\pi$ is used recurrently for $2$-dimensional planes, the $2$-dimensional area of the unit circle in $\R^2$ will be denoted by $\omega_2$. 

By \cite[Lemma 1.1]{DSS2} in case (b) we can assume, without loss of generality, that the ambient manifold $\Sigma$ coincides with
the euclidean space $\R^{2+n}$. In the rest of the paper we will therefore always make the following

\begin{ipotesi}\label{ipotesi_base}
$T$ is an integral current of dimension $2$ with bounded support and it satisfies one of the three conditions (a), (b) or (c) in Definition \ref{d:semicalibrated}. Moreover
\begin{itemize}
\item In case (a),
$\Sigma\subset\R^{2+n}$ is a $C^{3, \eps_0}$
submanifold of dimension $2 + \bar n = 2 + n - l$, which is the graph of an entire
function $\Psi: \R^{2+\bar n}\to \R^l$ and satisfies the bounds
\begin{equation}\label{e:Sigma}
\|D \Psi\|_0 \leq c_0 \quad \mbox{and}\quad \bA := \|A_\Sigma\|_0
\leq c_0,
\end{equation}
where $c_0$ is a positive (small) dimensional constant and $\eps_0\in ]0,1[$. 
\item In case (b) we assume that $\Sigma = \R^{2+n}$ and that the semicalibrating form $\omega$ is $C^{2, \eps_0}$. 
\item In case (c) we assume that $T$ is supported in $\Sigma = \partial \bB_R (p_0)$ for some $p_0$ with $|p_0|=R$, so that $0\in \partial \bB_R (p_0)$. We assume also that $T_0 \partial \bB_R (p_0)$ is $\R^{2+n-1}$ (namely $p_0= (0, \ldots, 0, \pm |p_0|)$ and we let $\Psi: \R^{2+n-1}\to \R$ be a smooth extension to the whole space of the function which describes $\Sigma$ in $\bB_2 (0)$. We assume then that \eqref{e:Sigma} holds, which is equivalent to the requirement that $R^{-1}$ be sufficiently small.
\end{itemize}
\end{ipotesi}

In addition, since the conclusion of Theorem~\ref{t:finale} is local, by \cite[Proposition 0.4]{DSS1}
we can also assume to be always in the following situation.

\begin{ipotesi}\label{ipotesi_base_2}
In addition to Assumption \ref{ipotesi_base} we assume the following:
\begin{itemize}
\item[(i)] $\partial T \res \bC_2 (0, \pi_0)=0$;
\item[(ii)] $0\in \supp (T)$ and the tangent cone
at $0$ is given by $\Theta (T, 0) \a{\pi_0}$ where $\Theta (T, 0)\in \N\setminus \{0\}$;
\item[(iii)] $T$ is irreducible in any neighborhood $U$ of $0$ in the following sense:
it is not possible to find $S$, $Z$ non-zero integer rectifiable
currents in $U$ with $\partial S = \partial Z =0$ (in $U$), $T= S+Z$ and $\supp (S)\cap \supp (Z)=\{0\}$.
\end{itemize}
\end{ipotesi}

In order to justify point (iii), observe that we can argue as in the proof of \cite[Theorem 3.1]{DSS1}: assuming that in a certain neighborhood $U$ 
there is a decomposition $T= S+Z$ as above, it follows from \cite[Proposition 2.2]{DSS1} that both $S$ and $Z$ fall in one of the classes of Definition \ref{d:semicalibrated}. In turn this implies that $\Theta (S, 0), \Theta (Z, 0)\in \N\setminus \{0\}$ and thus $\Theta (S, 0) < \Theta (T, 0)$.
We can then replace $T$ with either $S$ or $Z$. Let $T_1=S$ and argue similarly if it is not irreducibile: obviously we can apply one more time the argument above and find a $T_2$ which satisfies all the requirements and has $0<\Theta (T_2, 0) < \Theta (T_1, 0)$. This process must stop after at most $N = \Theta (T, 0)$ steps: the final current is then necessarily irreducible.

\subsection{Branching model} We next introduce an object which will play a key role in the rest of our work, because it is the basic local model of the
singular behavior of a $2$-dimensional area-minimizing current: for each positive natural number $Q$ we will denote by $\gira_{Q,\rho}$ the flat Riemann surface which is a disk with a conical singularity, in the origin, of angle $2\pi Q$ and radius $\rho>0$. More precisely we have

\begin{definition}\label{d:Riemann_surface}
$\gira_{Q,\rho}$ is topologically an open $2$-dimensional disk, which we identify with the topological space $\{(z,w)\in \mathbb C^2 : w^Q=z, |z|<\rho\}$. For each $(z_0, w_0)\neq 0$ in $\gira_{Q,\rho}$ we consider the connected component $\disco (z_0, w_0)$ of $\gira_{Q,\rho}\cap \{(z,w):|z-z_0| < |z_0|/2\}$ which contains $(z_0, w_0)$. We then consider the smooth manifold given by the atlas 
\[
\{(\disco (z,w)), (x_1, x_2)): (z,w)\in \gira_{Q,\rho} \setminus \{0\}\}\, ,
\] 
where $(x_1, x_2)$ is the function which gives the real and imaginary part of the first complex coordinate of a generic point of $\gira_{Q,\rho}$. On such smooth manifold we consider the following flat Riemannian metric: on each $\disco (z,w)$ with the chart $(x_1, x_2)$ the metric tensor is the usual euclidean one $dx_1^2+dx_2^2$. Such metric will be called the {\em canonical flat metric} and denoted by $e_Q$.
The coordinates $(x_1, x_2) = z$ will be called {\em standard flat coordinates}.
\end{definition}

When $Q=1$ we can extend smoothly the metric tensor to the origin and we obtain the usual euclidean $2$-dimensional disk.
For $Q>1$ the metric tensor does not extend smoothly to $0$, but we can nonetheless complete the induced geodesic distance on $\gira_{Q,\rho}$ in a neighborhood of $0$: for $(z,w)\neq 0$ the distance to the origin will then correspond to $|z|$. The resulting metric space is a well-known object in the literature, namely a flat Riemann surface with an isolated conical singularity at the origin (see for instance \cite{Zorich}). Note that for each $z_0$ and $0 <r\leq \min \{|z_0|, \rho -|z_0|\} $ the set $\gira_{Q, \rho}\cap \{|z-z_0|<r\}$ consists then of $Q$ nonintersecting $2$-dimensional disks, each of which is a geodesic ball of $\gira_{Q, \rho}$ with radius $r$ and center $(z_0, w_i)$ for some $w_i\in \C$ with $w_i^Q = z_0$. We then denote each of them by $B_r (z_0,w_i)$ and treat it as a standard disk in the euclidean $2$-dimensional plane (which is correct from the metric point of view). We use however the same notation for the distance disk $B_r (0)$, namely for the set $\{(z,w): |z|<r\}$, although the latter is {\em not isometric} to the standard euclidean disk. Since this might be create some ambiguity, we will use the specification $\R^2 \supset B_r (0)$ when referring to the standard disk in $\R^2$. 

\subsection{Admissible $Q$-branchings}
When one of (or both) the parameters $Q$ and $\rho$ are clear from the context, the  corresponding subscript (or both) will be omitted. We will consider repeatedly functions $u$ defined on $\gira$. We will always treat each point of $\gira$ as an element of $\C^2$, mostly using $z$ and $w$ for the horizontal and vertical complex coordinates. Often $\C$ will be identified with $\R^2$ and thus the coordinate $z$ will be treated as a two-dimensional real vector, avoiding the more cumbersome notation $(x_1, x_2)$.

\begin{definition}[$Q$-branchings]\label{d:admissible}
Let $\alpha\in ]0,1[$, $b>1$, $Q\in \N\setminus \{0\}$ and $n\in \N\setminus \{0\}$. 
An admissible $\alpha$-smooth and $b$-separated $Q$-branching in $\R^{2+n}$ (shortly a $Q$-branching) is the graph
\begin{equation}
\gr (u) := \{(z, u(z,w)): (z,w) \in \gira_{Q, 2\rho}\} \subset \R^{2+n}\, 
\end{equation}
of a map $u: \gira_{Q,2\rho} \to \mathbb R^n$ satisfying the following assumptions.
For some constants $C_i>0$ we have
\begin{itemize}
\item $u$ is continuous, $u\in C^{3,\alpha}$ on $\gira_{Q, \rho} \setminus \{0\}$ and $u(0)=0$;
\item $|D^j u (z,w)|\leq C_i |z|^{1-j+\alpha}$ $\forall (z,w)\neq 0$ and $j\in \{0,1,2,3\}$;
\item $[D^3 u]_{\alpha, B_r (z,w)} \leq C_i |z|^{-2}$ for every $(z,w)\neq 0$ with $|z|=2r$;
\item If $Q>1$, then there is a positive constant $c_s\in ]0,1[$ such that
\begin{equation}\label{e:separation}
\min \{|u (z,w)-u(z,w')|: w\neq w'\} \geq 4 c_s |z|^b \qquad \mbox{for all $(z,w)\neq 0$.}
\end{equation}
\end{itemize}
The map $\Phii (z,w):= (z, u (z,w))$ will be called the {\em graphical parametrization} of the $Q$-branching.
\end{definition}

Any $Q$-branching as in the Definition above is an immersed disk in $\R^{2+n}$ and can be given a natural structure as integer rectifiable current, which will be denoted by $\bG_u$. For $Q=1$ a map $u$ as in Definition \ref{d:admissible} is a (single valued) $C^{1,\alpha}$ map $u: \R^2\supset B_{2\rho} (0)\to \R^n$. Although the term branching is not appropriate in this case, the advantage of our setup is that $Q=1$ will not be a special case in the induction statement of Theorem \ref{t:induttivo} below.  Observe that for $Q>1$
the map $u$ can be thought as a $Q$-valued map $u: \R^2\supset B_\rho (0) \to \Iqs$, setting $u(z)= \sum_{(z,w_i)\in \gira} \a{u(z,w_i)}$ for $z\neq 0$ and $u (0) = Q\a{0}$. The notation $\gr (u)$ and $\bG_u$ is then coherent with the corresponding objects defined in \cite{DS2} for general $Q$-valued maps.

\subsection{The inductive statement} Before coming to the key inductive statement, we need to introduce some more terminology.

\begin{definition}[Horned Neighborhood]\label{d:horned} Let $\gr (u)$ be a $b$-separated $Q$-branching.
For every $a>b$ we define the {\em horned neighborhood} $\bV_{u,a}$ of $\gr (u)$ to be
\begin{equation}
\bV_{u, a} := \{(x,y)\in \R^2\times \R^n: \exists (x,w) \in \gira_{Q, 2\rho} \mbox{ with } |y-u(x,w)|< c_s |x|^a\}\, ,
\end{equation}
where $c_s$ is the constant in \eqref{e:separation}.
\end{definition}

\begin{definition}[Excess]\label{d:excess}
Given an $m$-dimensional current $T$ in $\mathbb R^{m+n}$ with finite mass, its {\em excess} in the ball $\bB_r (x)$ and in the cylinder $\bC_r (p, \pi')$ with respect to the $m$-plane $\pi$ are
\begin{align}
\bE(T,\bB_r (p),\pi) &:= \left(2\omega_m\,r^m\right)^{-1}\int_{\bB_r (p)} |\vec T - \vec \pi|^2 \, d\|T\|\\
\bE (T, \bC_r(p, \pi'), \pi) &:= \left(2\omega_m\,r^m\right)^{-1}\int_{\bC_r (p, \pi')} |\vec T - \vec \pi|^2 \, d\|T\|\, .
\end{align}
For cylinders we omit the third entry when $\pi=\pi'$, i.e. $\bE (T, \bC_r (p, \pi)) := \bE (T, \bC_r (p, \pi), \pi)$.
In order to define the spherical excess we consider $T$ as in Assumption \ref{ipotesi_base} and we say that
$\pi$ {\em optimizes the excess} of $T$ in a ball $\bB_r (x)$ if
\begin{itemize}
\item In case (b)
\begin{equation}\label{e:optimal_pi}
\bE(T,\bB_r (x)):=\min_\tau \bE (T, \bB_r (x), \tau) = \bE(T,\bB_r (x),\pi);
\end{equation} 
\item In case (a) and (c) $\pi\subset T_x \Sigma$ and 
\begin{equation}\label{e:optimal_pi_2}
\bE(T,\bB_r (x)):=\min_{\tau\subset T_x \Sigma} \bE (T, \bB_r (x), \tau) = \bE(T,\bB_r (x),\pi)\, .
\end{equation} 
\end{itemize}
\end{definition}
Note in particular that, in case (a) and (c), $\bE (T, \bB_r (x))$ differs from the quantity defined in \cite[Definition 1.1]{DS5}, where, although $\Sigma$ does not coincide with the ambient euclidean space, $\tau$ is allowed to vary among {\em all} planes, as in case (b). Thus a notation more consistent with that of \cite{DS5} would be, in case (a) and (c), $\bE^\Sigma (T, \bB_r (x))$. However, the difference is a minor one and we prefer to keep our notation simpler.

Our main induction assumption is then the following

\begin{ipotesi}[Inductive Assumption]\label{induttiva}
$T$ is as in Assumption \ref{ipotesi_base} and \ref{ipotesi_base_2}. 
For some constants $\bar Q\in \mathbb N\setminus \{0\}$ and $0<\alpha< \frac{1}{2\bar Q}$ there is an $\alpha$-admissible $\bar Q$-branching
$\gr (u)$ with $u: \gira_{\bar{Q}, 2}\to \R^n$ such that
\begin{itemize}
\item[(Sep)] If $\bar Q>1$, $u$ is $b$-separated for some $b>1$; a choice of some $b>1$ is fixed also in the case
$\bar Q =1$, although in this case the separation condition is empty.
\item[(Hor)] $\supp (T) \subset \bV_{u,a}\cup\{0\}$ for some $a>b$;
\item[(Dec)] There exist $\gamma >0$ and a $C_i>0$ with the following property. Let $p = (x_0,y_0)\in \supp (T)\cap \bC_{\sqrt{2}} (0)$ and $4 d := |x_0|>0$, let $V$ be the connected component of 
$\bV_{u,a}\cap \{(x,y): |x-x_0| < d\}$ containing $p$ and let $\pi (p)$ be the plane tangent to $\gr (u)$ at the only point of the form $(x_0, u(x_0, w_i))$ which is contained in $V$. Then
\begin{equation}\label{e:effetto_energia}
\bE (T\res V,\bB_\sigma (p), \pi (p)) \leq C_i^2 d^{2\gamma -2} \sigma^2 \qquad \forall \sigma\in \left[{\textstyle{\frac{1}{2}}}d^{(b+1)/2}, d\right]\, .
\end{equation}
\end{itemize}
\end{ipotesi}

The main inductive step is then the following theorem, where we denote by $T_{p,r}$ the rescaled current $(\iota_{p,r})_\sharp T$, through the map $\iota_{p,r} (q) := (q-p)/r$.

\begin{theorem}[Inductive statement]\label{t:induttivo}
Let $T$ be as in Assumption \ref{induttiva} for some $\bar Q = Q_0$. Then,
\begin{itemize}
\item[(a)] either $T$ is, in a neighborhood of $0$, a $Q$ multiple of a $\bar{Q}$-branching $\gr (v)$;
\item[(b)] or there are $r>0$ and $Q_1>Q$ such that $T_{0, r}$ satisfies Assumption \ref{induttiva} with $\bar Q = Q_1$.
\end{itemize}
\end{theorem}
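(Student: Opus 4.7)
The plan is to derive Theorem \ref{t:induttivo} by combining the two subsidiary results announced in the introduction: Theorem \ref{t:cm} (proved in the companion paper \cite{DSS3}), which supplies an $\alpha$-admissible $\bar Q$-branching $\cM = \gr(\varphi)$ adapted to $\gr(u)$ together with a $\bar Q$-valued Lipschitz normal approximation $N : \cM \to \R^{2+n}$ whose graph approximates $T$ to superlinear order on dyadic annuli around $0$; and Theorem \ref{t:bu}, the blowup theorem, which is the subject of the present note. Granted these two, Theorem \ref{t:induttivo} reduces to a straightforward dichotomy on $N$: if $N$ vanishes identically in a neighborhood of $0$ then $T$ locally coincides with $\Theta(T,0)\,\bG_\varphi$ and alternative (a) holds with $v = \varphi$; otherwise the blowup theorem will deliver the rescaling $T_{0,r}$ and the strictly larger integer $Q_1$ required by alternative (b).

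I would prove Theorem \ref{t:bu} along the lines of Almgren's blowup scheme, adapted to the conical geometry of $\gira_{\bar Q, 2\rho}$. After recording the $L^2$ control on $N$ and $|DN|$ provided by Theorem \ref{t:cm} and by the decay \eqref{e:effetto_energia}, I would introduce an Almgren-type frequency function
\begin{equation*}
I_N(r) \;=\; \frac{r\int_{\{|z|<r\}} |DN|^2\,d e_{\bar Q}}{\int_{\{|z|=r\}} |N|^2\,d\cH^1}
\end{equation*}
and establish an almost monotonicity inequality of the form $I_N(r) \leq (1+Cs^\delta)\,I_N(s) + Cs^\delta$ for $0<r<s$, by combining the first variation of $\mass(T)$ with the minimal-surface system satisfied by $\cM$. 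From this monotonicity, $I_0 := \lim_{r\to 0^+} I_N(r)$ exists and is a strictly positive real number. Normalizing $N$ by a suitable $L^2$-mean at scale $r$ and letting $r \to 0$, I would extract a nontrivial $I_0$-homogeneous $\D$-minimizing $\bar Q$-valued map $N_\infty$ on $\gira_{\bar Q, 1}$. The classification of such homogeneous minimizers (coming from the general theory of \cite{DS1}), combined with the separation hypothesis (Sep) and the irreducibility in Assumption \ref{ipotesi_base_2}(iii), forces the sheets of $N_\infty$ to cluster over a strictly finer $Q_1$-branching with $Q_1 > \bar Q$. A quantitative version of this clustering, together with the approximation estimates of $T$ by $N$, then produces the scale $r > 0$ and an admissible $Q_1$-branching $\tilde u$ such that $T_{0,r}$ satisfies Assumption \ref{induttiva} with $\bar Q = Q_1$.

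The main obstacle is establishing the almost monotonicity of $I_N$ in this branched setting, where three error sources interact: the (small but nonzero) curvature of $\cM$, the ambient almost-minimality defect controlled by $\bA$ and, in case (b) of Definition \ref{d:semicalibrated}, by the $C^{2,\eps_0}$ semicalibration $\omega$, and the singular conical geometry of the base $\gira_{\bar Q, 2\rho}$ at the branching point. The exponent $\gamma > 0$ in \eqref{e:effetto_energia} and the condition $\alpha < 1/(2\bar Q)$ are tailored precisely so that these error terms can be summed over dyadic annuli $\{2^{-k-1}<|z|<2^{-k}\}$ and absorbed into the derivative of a suitably corrected frequency. A secondary but also nontrivial technical point is verifying that the blowup limit $N_\infty$ is globally $\D$-minimizing on $\gira_{\bar Q}$ despite the fact that both the Lipschitz norm of $N$ and the approximation exponent of $T$ by $\bG_N$ degenerate as one approaches $0$; this calls for a quantitative almost $\D$-minimality at each intermediate scale before passing to the limit.
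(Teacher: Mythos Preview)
Your outline matches the paper's strategy: reduce Theorem \ref{t:induttivo} to Theorems \ref{t:cm} and \ref{t:bu}, prove \ref{t:bu} via an almost-monotone frequency function and extraction of an $I_0$-homogeneous $\D$-minimizing blowup $g$, and then use the irreducibility of $T$ to pass from $g$ to a strictly finer branching. One terminological correction: the normal approximation $\NN$ is $Q$-valued, not $\bar Q$-valued, where $Q\bar Q = \Theta(T,0)$; the domain $\gira_{\bar Q}$ already encodes the $\bar Q$ sheets of the center manifold, and $\NN$ records the $Q$ sheets of $T$ over each of them.

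The part you gloss over --- ``a quantitative version of this clustering, together with the approximation estimates of $T$ by $N$, then produces the scale $r>0$ and an admissible $Q_1$-branching'' --- is in fact the bulk of the paper's proof of Theorem \ref{t:induttivo} proper (Section 2.4), and requires more than the approximation estimates. Three distinct arguments are needed. First, one must show $\supp(T)$ lies in a horned neighborhood of the graph of $\Psii + n$ (with $n$ the projection of $g$ to the normal bundle of $\cM$): for points in $\supp(T)\cap\supp(\bT_\FF)$ this follows from the uniform estimate \eqref{e:uniform convergence}, but for points in $\supp(T)\setminus\supp(\bT_\FF)$ one needs the monotonicity formula combined with the mass-difference bound \eqref{e:diff masse}. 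Second, irreducibility of $T$ forces $g$ to be $\frac{Q}{Q'}$ copies of a single irreducible piece, and the condition $\etaa\circ g \equiv 0$ (part of the output of Theorem \ref{t:bu}) is what guarantees $Q'>1$; the separation hypothesis (Sep) plays no role here. Third, and most substantially, the decay condition (Dec) for the new branching is not a formal consequence of the earlier estimates: it requires the curvilinear-excess expansion of \cite[Proposition 3.4]{DS2} together with the sharp annular energy bound \eqref{e:energia N}, which in turn is one of the conclusions of Theorem \ref{t:bu} and comes from the rate-of-convergence estimate $|\bI(r)-I_0|\leq C r^\lambda$ of Theorem \ref{t:decay}, not merely from the existence of the limit $I_0$.
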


Theorem \ref{t:finale} follows then easily from Theorem \ref{t:induttivo} and \cite{DSS1}. 

\subsection{Proof of Theorem \ref{t:finale}} As already mentioned, without loss of generality we can assume that Assumption \ref{ipotesi_base} holds, cf. \cite[Lemma 1.1]{DSS1} (the bounds on $\bA$ and $\Psi$ can be achieved by a simple scaling argument). Fix now a point $p$
in $\supp (T)\setminus \supp (\partial T)$. Our aim is to show that $T$ is regular in a punctured neighborhood of $p$. Without loss of generality we can assume that $p$ is the origin. Upon suitably decomposing $T$ in some neighborhood of $0$ we can easily assume that (I) in Assumption \ref{induttiva} holds, cf. the argument of Step 4 in the proof of \cite[Theorem 3.1]{DSS1}. Thus, upon suitably rescaling and rotating $T$ we can assume that $\pi_0$ is the unique tangent cone to $T$ at $0$, cf. \cite[Theorem 3.1]{DSS1}. In fact, by \cite[Theorem 3.1]{DSS1} $T$ satisfies Assumption \ref{induttiva} with $\bar{Q}=1$: it suffices to chose $u\equiv 0$ as admissible smooth branching. If $T$ were not regular in any punctured neighborhood of $0$, we could then apply Theorem \ref{t:induttivo} inductively to find a sequence of rescalings $T_{0, \rho_j}$ with $\rho_j\downarrow 0$ which satisfy Assumption \ref{induttiva} with $\bar Q=Q_j$ for some strictly increasing sequence of integers.
  It is however elementary that the density $\Theta (0, T)$ bounds $Q_j$ from above, which is a contradiction.

\section{The branched center manifold and the blow-up theorem}\label{s:2_pezzi}

From now on we fix $T$ satisfying Assumption \ref{induttiva}. Observe that, without loss of generality, we are always free to rescale homothetically our current $T$ with a factor larger than $1$ and ignore whatever portion falls outside $\bC_2 (0)$. We will do this several times, with factors which will be assumed to be sufficiently large. Hence, if we can prove that something holds in a sufficiently small neighborhood of $0$, then we can assume, withouth loss of generality, that it holds on $\bC_2$.
For this reason we can assume that the constants $C_i$ in Definition \ref{d:admissible} and Assumption \ref{induttiva} is as small as we want.
In turns this implies that there is a well-defined orthogonal projection $\P: \bV_{u,a}\cap \bC_1 \to \gr (u)\cap \bC_2$, which is a $C^{2,\alpha}$ map. 
We next recall \cite[Lemma 2.1]{DSS3}:

\begin{lemma}\label{l:density}
Let $T$ and $u$ be as in Assumption \ref{induttiva} for some $\bar  Q$. Then the nearest point projection $\P: \bV_{u,a} \cap \bC_1 \to \gr (u)$ is a well-defined $C^{2, \alpha}$ map. In addition there is $Q\in \N\setminus \{0\}$ such that $\Theta (0, T) = Q \bar{Q}$ and the unique tangent cone to $T$ at $0$ is $Q\bar{Q} \a{\pi_0}$. Finally,
after possibly rescaling $T$, $\Theta (p, T) \leq Q$ for every $p\in \bC_2\setminus \{0\}$ and, for every $x\in B_2 (0)$, each
connected component of $(x\times \R^n) \cap \bV_{u,a}$ contains at least one point of $\supp (T)$.
\end{lemma}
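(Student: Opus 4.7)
The plan proceeds in four steps, one per assertion of the lemma.

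\emph{The projection.} I would build $\P$ locally by the implicit function theorem. Fix $(x,y)\in \bV_{u,a}\cap \bC_1\setminus\{0\}$, set $d:=|x|/4$, and let $V$ be the connected component of $\bV_{u,a}\cap \bC_d(x)$ containing $(x,y)$. By the separation \eqref{e:separation} together with $a>b$, only one sheet of $\gr(u)$ over $B_d(x)$ enters $V$, the other $\bar Q-1$ sheets lying at distance $\gtrsim d^b\gg d^a$ from it. Rescaling by $d^{-1}$, the bounds in Definition~\ref{d:admissible} turn the relevant sheet into a $C^{3,\alpha}$ graph with small $C^{2,\alpha}$ norm, and the horn into a $d^{a-1}$-thin tubular neighborhood of it; the implicit function theorem then gives a $C^{2,\alpha}$ nearest-point projection in the rescaled picture. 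Patching these local constructions and setting $\P(0):=0$ produces the desired map on $\bV_{u,a}\cap \bC_1$, continuous at $0$ by the pinching of the horn.

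\emph{Density at the origin.} By Assumption~\ref{ipotesi_base_2}(ii) the unique tangent cone at $0$ is $\Theta(T,0)\a{\pi_0}$ with $\Theta(T,0)\in \N\setminus\{0\}$; the task is to show $\Theta(T,0)=Q\bar Q$ for a positive integer $Q$. On each ball $\bB_\sigma(p)$ with $p\in \gr(u)\setminus\{0\}$ and $\sigma$ much smaller than $|\p_{\pi_0}(p)|$, the excess estimate (Dec) combined with the Lipschitz approximation theorem of \cite{DSS2} represents $T\res \bB_\sigma(p)$ as the graph of an integer $Q_p$-valued function over the tangent plane to $\gr(u)$ at $p$, modulo a set of negligible mass. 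By upper semicontinuity of the density and its integer nature, $p\mapsto Q_p$ is locally constant on $\gr(u)\setminus\{0\}$. Since monodromy around $0$ cyclically permutes the $\bar Q$ local branches of $\gira_{\bar Q,2\rho}$, the surface $\gr(u)\setminus\{0\}$ is connected, forcing $Q_p\equiv Q$ for a single positive integer $Q$. Letting $\sigma\downarrow 0$ and summing the contributions of the $\bar Q$ local branches converging to $0$ yields the tangent cone density $\Theta(T,0)=Q\bar Q$.

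\emph{Density upper bound.} Once $Q$ is identified, the same local $Q$-valued graph description, combined with monotonicity of $r\mapsto r^{-2}\|T\|(\bB_r(p))$ and upper semicontinuity of the density, gives $\Theta(p,T)\le Q$ for every $p\in \bC_2\setminus\{0\}$; the preliminary rescaling of $T$ is used to make (Dec) applicable up to scale $1$ so that the $Q$-valued approximation holds uniformly on $\bC_2$.

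\emph{Nonemptiness on every sheet.} Since $Q\ge 1$, the $Q$-valued graph representation on each local branch of $\gr(u)\setminus\{0\}$ already guarantees that $\supp(T)$ meets every connected component of $(x\times\R^n)\cap \bV_{u,a}$ for each $x\in B_2(0)\setminus\{0\}$ (for $x=0$ the claim is vacuous, as $0\notin\bV_{u,a}$). Irreducibility, Assumption~\ref{ipotesi_base_2}(iii), provides the safeguard against any pathological global decomposition of $T$ that would leave one branch empty. The main obstacle is the second step: the excess decay (Dec) must be tight enough at every dyadic scale down to the horn thickness to license the integer-valued approximation on each local branch, and only then does connectedness of the branched Riemann surface amalgamate the $\bar Q$ a priori distinct multiplicities into a single $Q$.
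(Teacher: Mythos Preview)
The paper does not prove this lemma: it is explicitly recalled as \cite[Lemma~2.1]{DSS3} and stated without argument, so there is no in-paper proof to compare against. Your four-step outline is a plausible reconstruction of the standard argument one would expect in \cite{DSS3}: the tubular-neighborhood construction of $\P$ via the decay bounds of Definition~\ref{d:admissible}, the identification of a constant sheet multiplicity $Q$ by combining (Dec) with a Lipschitz approximation and the connectedness of $\gr(u)\setminus\{0\}$, and then the density bound and nonemptiness as consequences.

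Two places where your sketch is thin and would need care in a full write-up: first, in Step~2 the passage from ``$Q_p\equiv Q$ on every local branch'' to ``$\Theta(T,0)=Q\bar Q$'' is cleanest done by computing the push-forward $(\p_{\pi_0})_\sharp T$ and invoking the constancy theorem in $\bC_r$, rather than by ``letting $\sigma\downarrow 0$ and summing''; the latter phrasing hides the mass estimate you actually need. Second, your Step~4 invokes irreducibility as a ``safeguard'', but in fact the nonemptiness on each sheet follows already from the $Q$-valued graph structure with $Q\geq 1$; irreducibility is not needed here (it is used elsewhere, cf.\ Step~2 of Section~\ref{s:2_pezzi}). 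These are refinements rather than gaps; the overall strategy is sound.
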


Since we will assume during the rest of the paper that the above discussion applies, we summarize the relevant conclusions in the following

\begin{ipotesi}\label{piu_fogli}
$T$ satisfies Assumption \ref{induttiva} for some $\bar{Q}$ and with $C_i$ sufficiently small. $Q\geq 1$
is an integer, $\Theta (0, T) = Q \bar{Q}$ and $\Theta (p, T) \leq Q$ for all $p\in \bC_2\setminus \{0\}$. 
\end{ipotesi}

The overall plan to prove Theorem \ref{t:induttivo} is then the following:
\begin{itemize}
\item[(CM)] We construct first a branched center manifold, i.e. a second admissible smooth branching $\phii$ on $\gira_{\bar{Q}}$, and a corresponding $Q$-valued map $N$ defined on the normal bundle of $\gr (\phii)$, which approximates $T$ with a very high degree of accuracy (in particular more accurately than $u$) and whose average $\etaa\circ N$ is very small;
\item[(BU)] Assuming that alternative (a) in Theorem \ref{t:induttivo} does not hold, we study the asymptotic behavior of $N$ around $0$ and use it to build a new admissible smooth branching $v$ on some $\gira_{k \bar{Q}}$ where $k\geq 2$ is a factor of $Q$: this map will then be the one sought in alternative (b) of Theorem \ref{t:induttivo} and a suitable rescaling of $T$ will lie in a horned neighborhood of its graph.
\end{itemize}
The first part of the program is the one achieved in \cite{DSS3}, whereas the second part will be completed in this note. Note that, when $Q=1$, from (BU) we will conclude that alternative (a) necessarily holds: this will be a simple corollary of the general case, but we observe that it could also be proved resorting to the classical Allard's regularity theorem.

\subsection{Smallness condition} In several occasions we will need that the ambient manifold $\Sigma$ is suitably flat and that the excess of the current $T$ is suitably small. This can, however, be easily achieved after scaling. More precisely we recall \cite[Lemma 2.3]{DSS3}. 

\begin{lemma}\label{l:piccolezza}
Let $T$  be as in the Assumptions \ref{induttiva} and \ref{piu_fogli}.
After possibly rescaling, rotating and modifying $\Sigma$ outside $\bC_2 (0)$ we can assume that, in case (a) and (c) of Definition \ref{d:semicalibrated},
\begin{itemize}
\item[(i)] $\Sigma$ is a complete submanifold of $\R^{2+n}$;
\item[(ii)] $T_0 \Sigma = \R^{2+\bar{n}}\times \{0\}$ and, $\forall p\in \Sigma$, $\Sigma$ is the graph of a $C^{3,\eps_0}$ map $\Psi_p: T_p \Sigma \to (T_p\Sigma)^\perp$.
\end{itemize}
Under these assumptions, we denote by $\bc$ and $\bmo$ the following quantities
\begin{align}
&\bc := \sup \{\|D\Psi_p\|_{C^{2,\eps_0}}:p\in \Sigma\}\qquad \mbox{in the cases (a) and (c) of Definition \ref{d:semicalibrated}}\\
&\bc :=\|d\omega\|_{C^{1,\eps_0}} \qquad \qquad\;\;\qquad\qquad\mbox{in case (b) of Definition \ref{d:semicalibrated}}\\
&\bmo:= \max \left\{ \bc^2, \bE (T, \bC_2, \pi_0), C_i^2, c_s^2\right\}\, ,
\end{align}
where $C_i$ and $c_s$ are the constants appearing in Definition \ref{d:admissible} and Assumption \ref{induttiva}. Then, for any $\eps_2>0$,
after possibly rescaling the current by a large factor, we can assume
\begin{equation}\label{e:smallness}
\bmo \leq \eps_2\, .
\end{equation}
\end{lemma}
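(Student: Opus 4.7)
The plan is to verify the three conclusions in turn, with the main content lying in the quantitative smallness estimate \eqref{e:smallness}. The key observation is that by Lemma \ref{l:density} the unique tangent cone of $T$ at $0$ is the flat multiplicity $Q\bar Q$ plane $Q\bar Q \a{\pi_0}$, while the admissible $\bar Q$-branching $u$ vanishes at $0$ with the polynomial decay rates prescribed in Definition \ref{d:admissible}; hence, upon zooming in, both the ambient geometry and the branching data become arbitrarily flat and small.

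Before rescaling we perform an orthogonal rotation of $\R^{2+n}$ to align $\pi_0$ with $\R^2\times\{0\}$ and, in cases (a) and (c), $T_0\Sigma$ with $\R^{2+\bar n}\times\{0\}$. For conclusions (i) and (ii), in case (a) $\Sigma$ is already the complete graph of $\Psi:\R^{2+\bar n}\to\R^l$, and once its curvature is small (as ensured by the upcoming rescaling) the implicit function theorem produces the global graph $\Psi_p$ over each $T_p\Sigma$. In case (c) the sphere $\Sigma$ is cut off outside $\bC_2(0)$ and its local graph description is extended smoothly to all of $\R^{2+n-1}$, producing a complete $C^{3,\eps_0}$ submanifold with the desired global-graph property; since $\supp T\subset \bC_{\sqrt{2}}(0)$, the modification does not touch $T$ itself.

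For the smallness estimate we pass to $T_{0,r}$ with $r$ small and examine the four summands in $\bmo$. First, $\bE(T_{0,r},\bC_2,\pi_0)=\bE(T,\bC_{2r},\pi_0)\to 0$ as $r\downarrow 0$, by the flat tangent-cone property at $0$ combined with the standard comparability of cylindrical and spherical excesses for almost minimizers. Second, the rescaled ambient $r^{-1}\Sigma$ has curvature of size $r\bA$, with graphical representations $\Psi_p^r(y):=r^{-1}\Psi_p(ry)$ (respectively, the rescaled semicalibration in case (b)) satisfying $\|D\Psi_p^r\|_{C^{2,\eps_0}}\leq C r^{1+\eps_0}\|D\Psi_p\|_{C^{2,\eps_0}}$, so $\bc$ decays as $r^{1+\eps_0}$. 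Third, the rescaled branching
\[
u_r(z,w):=r^{-1}\,u(rz,\,r^{1/\bar Q}w)
\]
satisfies by the chain rule $|D^j u_r(z,w)|\leq C_i\, r^{\alpha}\,|z|^{1-j+\alpha}$ for $j\in\{0,1,2,3\}$ and the analogous bound on $[D^3 u_r]_\alpha$, so the effective Definition \ref{d:admissible} constant is $C_i r^\alpha$; moreover the separation becomes $|u_r(z,w)-u_r(z,w')|\geq 4 c_s\, r^{b-1}\,|z|^b$, so $c_s$ is replaced by $c_s r^{b-1}$. Since $\alpha,\eps_0>0$ and $b>1$, each summand is made smaller than $\eps_2$ by taking $r$ small.

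The technical point that must be verified is that the rescaled pair $(u_r,T_{0,r})$ continues to satisfy Assumptions \ref{induttiva} and \ref{piu_fogli}, so that the induction hypothesis survives the scaling. This reduces to checking that the horned-neighborhood inclusion $\supp(T_{0,r})\subset \bV_{u_r,a}\cup\{0\}$ and the decay estimate (Dec) are invariant under the coupled scaling $(z,w,y)\mapsto(rz,r^{1/\bar Q}w,ry)$; both follow from a direct change-of-variables computation, using that $\bV_{u,a}$, the separation scale $d^{(b+1)/2}$, and the radius $\sigma$ all scale consistently with the powers of $|z|$ appearing in (Hor) and (Dec). Irreducibility (iii) of Assumption \ref{ipotesi_base_2} is immediate, since any nontrivial decomposition of $T_{0,r}$ pulls back to one of $T$. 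Combining these observations with the rotation and extension performed at the outset yields (i), (ii), and \eqref{e:smallness} simultaneously.
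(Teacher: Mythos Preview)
The paper does not actually prove Lemma~\ref{l:piccolezza}: it is merely \emph{recalled} from the companion paper \cite[Lemma~2.3]{DSS3}, and no proof environment appears here. Your reconstruction of the argument is therefore not competing against anything in the present text, but it is a faithful sketch of the intended scaling argument and is essentially correct.

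One minor inaccuracy worth flagging: your stated decay rate for $\bc$ is off. With $\Psi_p^r(y)=r^{-1}\Psi_p(ry)$ one has $D\Psi_p^r(y)=D\Psi_p(ry)$, so $\|D\Psi_p^r\|_{C^0}$ does not pick up any power of $r$ from the chain rule alone; what makes it small is that $D\Psi_p(0)=0$, giving $\|D\Psi_p^r\|_{C^0(B_2)}\le Cr$, while the higher derivatives gain further factors of $r$. The overall bound is thus of order $r$, not $r^{1+\eps_0}$, for the lowest-order term. This does not affect the conclusion, since any positive power of $r$ suffices for \eqref{e:smallness}. The scaling of $C_i$ by $r^\alpha$ and of $c_s$ by $r^{b-1}$, as well as the invariance of (Hor), (Dec) and irreducibility under the coupled rescaling, are correctly identified.
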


In order to carry on the plan outlined in the previous subsection, it is convenient to use a different parametrization of $Q$-branchings.
If we remove the origin, any admissible $Q$-branching is a Riemannian submanifold of $\R^{2+n}\setminus \{0\}$: this gives a Riemannian tensor $g := \Phii^\sharp e$ (where $e$ denotes the euclidean metric on $\R^{2+n}$) on the puntured disk $\gira_{Q, 2\rho}\setminus \{0\}$. Note that in $(z,w)$ the difference between the metric tensor $g$ and the canonical flat metric $e_Q$ can be estimated by (a constant times) $|z|^{2\alpha}$: thus, as it happens for the canonical flat metric $e_Q$, when $Q>1$ it is not possible to extend the metric $g$ to the origin. However, using well-known arguments in differential geometry, we can find a conformal map from $\gira_{Q, r}$ onto a neighborhood of $0$ which maps the conical singularity of $\gira_{Q,r}$ in the conical singularity of the $Q$-branching. 
In fact, we need the following accurate estimates for such a map, cf. \cite[Proposition 2.4]{DSS3}:

\begin{proposition}[Conformal parametrization]\label{p:conformal}
Given an admissible $b$-separated $\alpha$-smooth $Q$-branching $\gr (u)$ with $\alpha <1/(2Q)$
there exist a constant $C_0 (Q, \alpha)>0$, a radius $r>0$ and functions $\Psii\colon \gira_{Q,r} \to \gr (u)$
and  $\lambda\colon \gira_{Q,r} \to \R_+$
such that
\begin{itemize}
\item[(i)] $\Psii$ is a homeomorphism of $\gira_{Q,r}$ with a neighborhood of $0$ in $\gr (u)$;
\item[(ii)] $\Psii\in C^{3,\alpha} (\gira_{Q,r}\setminus \{0\})$, with the estimates
\begin{align}
|D^l \big(\Psii(z,w) - (z,0)\big)| \leq & C_0 C_i |z|^{1+\alpha-l} \qquad \mbox{for $l=0,\dots,3$, $z\neq 0$}\, ,\label{e:conformal2}\\
[D^3 \Psii]_{\alpha, B_r (z,w)} \leq & C_0 C_i |z|^{-2} \qquad \mbox{for $z\neq 0$ and $r = |z|/2$}\, ;\label{e:conformal3}
\end{align}
\item[(iii)] $\Psii$ is a conformal map with conformal factor $\lambda$, namely, if we denote by $e$ the ambient euclidean metric
in $\R^{2+n}$ and by $e_Q$ the canonical euclidean metric of $\gira_{Q,r}$, 
\begin{equation}\label{e:conformal}
g:= \Psii^\sharp e = \lambda\, e_Q\, \qquad \mbox{on $\gira_{Q,r}\setminus \{0\}$.}
\end{equation}
\item[(iv)] The conformal factor $\lambda$ satisfies
\begin{align}
|D^l (\lambda -1) (z,w)| \leq &C_0 C_i |z|^{2\alpha-l} \qquad \mbox{for $l=0,1,\ldots, 2$}\label{e:conformal4}\\
[D^2 \lambda]_{\alpha, B_r (z,w)} \leq & C_0 C_i |z|^{\alpha-2}\qquad \mbox{for $z\neq 0$ and $r = |z|/2$}\, .\label{e:conformal5}
\end{align}
\end{itemize}
\end{proposition}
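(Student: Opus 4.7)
The strategy is to solve a nonlinear Beltrami-type problem on the branched disk $\gira_{Q,2\rho}$ to produce a self-homeomorphism $\tau$ such that $\Psii := \Phii \circ \tau$ is conformal with respect to $e_Q$ on the source and to the euclidean metric $e$ on $\R^{2+n}$, where $\Phii(z,w) = (z, u(z,w))$ is the graphical parametrization. Throughout we use two charts on $\gira_{Q,2\rho}$: the standard flat chart $(x_1, x_2) = z$, in which $e_Q = |dz|^2$ and the origin is a conical singularity of angle $2\pi Q$; and the uniformizing chart $w \in \C$ defined by $z = w^Q$, in which $\gira_{Q,2\rho}$ becomes an ordinary disk and $w = 0$ is a regular point of the underlying topology, but $e_Q = Q^2|w|^{2Q-2}|dw|^2$ is singular there. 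The first chart is the natural one for reading off the final estimates \eqref{e:conformal2}--\eqref{e:conformal5}; the second is where we actually solve the PDE, since one cannot run standard elliptic theory at a conical singularity.

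First compute $g := \Phii^\sharp e$ on $\gira_{Q,2\rho}\setminus\{0\}$: in the flat chart $g_{ij}(z) = \delta_{ij} + \partial_i u \cdot \partial_j u$, so Definition \ref{d:admissible} gives $|D^l(g_{ij} - \delta_{ij})(z,w)| \leq C C_i^2 |z|^{2\alpha - l}$ for $l = 0,1,2$ with the matching Hölder bound on $D^2 g$. Writing $g = \sigma\bigl(|dz|^2 + \mathrm{Re}(\mu\, d\bar z^2)\bigr)$ with $\sigma = \tfrac12(g_{11}+g_{22})$ and Beltrami coefficient $\mu = (g_{11}-g_{22}-2ig_{12})/(g_{11}+g_{22})$, the same weighted bounds hold for $\sigma - 1$ and $\mu$. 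Passing to the uniformizing chart, the holomorphy of $z = w^Q$ preserves the modulus of the Beltrami coefficient, so $|\mu_w(w)| \leq C C_i^2 |w|^{2Q\alpha}$ with analogous derivative bounds, and by hypothesis $2Q\alpha < 1$. We seek $F(w) = w + v(w)$ with $F(0)=0$ solving $\bar\partial_w F = \mu_w\, \partial_w F$, equivalently
\[
v = \cT\bigl(\mu_w(1+\partial v)\bigr), \qquad \cT f(w) := -\tfrac{1}{\pi}\int f(\zeta)(\zeta - w)^{-1}\, dA(\zeta)\, .
\]
In the weighted Hölder norm
\[
\|v\|_\ast := \sup_w |w|^{-1-2Q\alpha}|v| + \sum_{l=1}^{3}\sup_w |w|^{l-1-2Q\alpha}|D^l v| + \sup_w |w|^{2-2Q\alpha}[D^3 v]_{\alpha, B_{|w|/2}(w)}\, ,
\]
the classical gain of one order of decay for $\cT$ (integrability of the Cauchy kernel against $\mu_w$ being guaranteed by $2Q\alpha < 1$) and the boundedness of the Beurling transform $\partial\cT$ on the corresponding Hölder class show that $v \mapsto \cT(\mu_w(1+\partial v))$ is a contraction once $C_i$ is small enough, giving the unique fixed point with $\|v\|_\ast \leq C C_i^2$.

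Let $\tau$ be the self-map of $\gira_{Q,2\rho}$ which in the uniformizing chart equals $F$, and define $\Psii := \Phii \circ \tau$. By construction $\tau^\sharp g = \lambda\, e_Q$ with $\lambda(z,w) = \sigma(\tau(z,w))\cdot Q^{-2}|w|^{2-2Q}|\partial_w F(w)|^2$, proving (iii). Since $z = w^Q$ and $|F^Q - w^Q| \lesssim |v|\cdot|w|^{Q-1}$, the bound $|v(w)| \leq C C_i^2 |w|^{1+2Q\alpha}$ converts to $|\tau_z - z|\leq C C_i^2 |z|^{1+2\alpha}$ in the flat chart, and together with the bounds on $u$ one obtains
\[
|\Psii(z,w)-(z,0)| \leq |\tau_z - z| + |u(\tau(z,w))| \leq C C_i^2 |z|^{1+2\alpha} + C C_i |z|^{1+\alpha} \leq C_0 C_i |z|^{1+\alpha}
\]
for $C_i$ small, which is \eqref{e:conformal2} for $l=0$; the higher-order estimates \eqref{e:conformal2}--\eqref{e:conformal3} and the estimates \eqref{e:conformal4}--\eqref{e:conformal5} on $\lambda$ follow by the same conversion, combining $\|v\|_\ast\leq C C_i^2$ with the weighted regularity of $u$ and $\sigma$. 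Finally, injectivity and openness of $\Psii$ follow from invariance of domain together with the nondegeneracy of $D\Psii$ on $\gira_{Q,r}\setminus\{0\}$.

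The main obstacle is the weighted analysis of the Beltrami equation: the coefficient $\mu_w$ is only $C^{2Q\alpha}$ at $w = 0$ and never Lipschitz (since $2Q\alpha < 1$), so standard elliptic theory at a regular interior point cannot be applied directly, and one must build a weighted Hölder framework in which both $\cT$ and $\partial\cT$ act boundedly and deliver the expected one-order gain of decay from $\mu_w$ to $v$. The hypothesis $\alpha < 1/(2Q)$ is exactly what makes the Cauchy kernel integrable against $\mu_w$ near $0$ and rules out logarithmic obstructions to the decay $v(w) = O(|w|^{1+2Q\alpha})$; this ultimately matches the sharp exponents appearing in \eqref{e:conformal2}--\eqref{e:conformal5}.
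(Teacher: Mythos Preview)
The paper does not actually prove this proposition; it is imported from \cite[Proposition~2.4]{DSS3}. Your strategy --- pull back the euclidean metric by the graphical parametrization $\Phii$, pass to the uniformizing chart $z=w^Q$, solve a Beltrami equation there, and read off weighted estimates --- is the standard route to isothermal coordinates near a conical singularity and is almost certainly the argument of \cite{DSS3}.

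There is, however, a direction error. If $F$ solves $\bar\partial_w F=\mu_w\,\partial_w F$ with $\mu_w$ evaluated at the \emph{source} point, then $F^\sharp|d\xi|^2=|F_w|^2\,|dw+\mu_w\,d\bar w|^2$, which is conformal to the pullback of $g$ to the $w$-chart; hence it is $(F^{-1})^\sharp g$, not $F^\sharp g$, that is conformal to the flat metric. Thus $\tau$ in the uniformizing chart must be $F^{-1}$ (so that $\Psii=\Phii\circ F^{-1}$), or else one should solve the companion equation $\bar\partial_w G=-\mu_w(G)\,\overline{\partial_w G}$ and set $\tau=G$. Either fix leaves the contraction scheme and the weighted bounds essentially unchanged, since the inverse of a map with $|F(w)-w|\leq CC_i^2|w|^{1+2Q\alpha}$ enjoys the same decay; but as written your formula for $\lambda$ and the claim $\tau^\sharp g=\lambda\,e_Q$ do not hold. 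A smaller point: invariance of domain plus nondegeneracy of $D\Psii$ gives only local injectivity; the global injectivity of $\Psii$ on $\gira_{Q,r}$ follows instead from that of $F$ (equivalently $F^{-1}$), which is immediate from $|F(w)-w|\ll|w|$.
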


\begin{definition}
A map $\Psii$ as in Proposition \ref{p:conformal} will be called a {\em conformal parametrization} of an admissible $Q$-branching.  
\end{definition}

\subsection{The center manifold and the approximation} We are now ready to state the two ``halves'' of Theorem \ref{t:induttivo}. The first one is 
\cite[Theorem 2.6]{DSS3}, which we recall here for the reader's convenienve.

\begin{theorem}[Center Manifold Approximation]\label{t:cm}
Let $T$ be as in Assumptions~\ref{induttiva} and \ref{piu_fogli}.
Then there exist $\eta_0, \gamma_0, r_0,C>0$, $b>1$, an admissible $b$-separated $\gamma_0$-smooth $\bar Q$-branching $\cM$, a corresponding conformal parametrization $\Psii: \gira_{\bar Q,2} \to \cM$ 
and a $Q$-valued map $\NN: \gira_{\bar Q,2} \to \I{Q}(\R^{2+n})$ with the following properties:
\begin{itemize}
\item[(i)] $\bar Q Q = \Theta (T, 0)$ and 
\begin{align}
|D(\Psii (z,w) -(z,0))|\leq &\, C \bmo^{1/2} |z|^{\gamma_0}\\ 
|D^2 \Psii (z,w)|+ |z|^{-1} |D^3 \Psii (z,w)| \leq &\, C \bmo^{\sfrac{1}{2}} |z|^{\gamma_0-1}\, ;
\end{align}
in particular, if we denote by $A_\cM$ the second fundamental form of $\cM\setminus \{0\}$,
\[
|A_\cM (\Psii (z,w))| + |z|^{-1} |D_{\cM} A_\cM (\Psii (z,w))| \leq C \bmo^{\sfrac{1}{2}} |z|^{\gamma_0-1}\, .
\]
\item[(ii)] $\NN\,_i (z,w)$ is orthogonal to the tangent plane, at $\Psii (z,w)$, to $\cM$.
\item[(iii)] If we define $S:= T_{0,r_0}$, then
$\supp (S)\cap \bC_1 \setminus \{0\}$ is contained in a suitable horned neighborhood of the $\bar Q$-branching, where the orthogonal projection
$\P$ onto it is well-defined. Moreover, for every $r\in]0,1[$ we have
\begin{align}\label{e:Ndecay}
\|\NN\vert_{B_r}\|_0 + \sup_{p\in \supp (S) \cap \P^{-1} (\Psii(B_r))} |p- \P (p)|\leq C \bmo^{\sfrac{1}{4}} r^{1+\sfrac{\gamma_0}{2}}\, .
\end{align}
\item[(iv)] If we define
\[
\bD (r) := \int_{B_r} |D\NN|^2
\quad \text{and}\quad
\bH (r) := \int_{\de B_r} |\NN|^2\, ,
\]
\[
\bF(r) := \int_0^r\frac{\bH(t)}{t^{2-\,\gamma_0}}\,dt
\quad\text{and}\quad \bLambda(r):= \bD(r)+\bF(r)\, ,
\]
then the following estimates hold for every $r\in ]0,1[$:
\begin{align}
\Lip (\NN\vert_{B_r}) \leq & C \min\{\bLambda^{\eta_0}(r), \bmo^{\eta_0}r^{\eta_0}\} \label{e:Lip_N}\\
\bmo^{\eta_0} \,\int_{B_r}  |z|^{\gamma_0 -1} |\etaa \circ \NN(z,w)| 
\leq & C\, \bLambda^{\eta_0}(r)\,\bD(r)+C\,\bF(r)\, .\label{e:media_pesata}
\end{align}
\item[(v)] Finally, if we set
\[
\FF(z,w):= \sum_i\a{\Psii(z,w) + \NN\,_i(z,w)}\, ,
\]
then
\begin{align}
\|S-\bT_\FF\|\big(\P^{-1}(\Psii(B_r))\big) \leq  & C\,\bLambda^{\eta_0}(r)\,\bD(r)+ C\,\bF(r)\, . \label{e:diff masse} 
\end{align}
\end{itemize}
\end{theorem}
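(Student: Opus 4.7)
The strategy is to transfer the center manifold construction of \cite{DS3,DS4} from the flat disk to the branched Riemann surface $\gira_{\bar Q, 2}$, equipped with its canonical conical metric $e_{\bar Q}$ and the conformal parametrization of Proposition~\ref{p:conformal}. The a priori $\bar Q$-branching $\gr (u)$ supplied by Assumption~\ref{induttiva} plays a double role: it fixes the topology of the center manifold $\cM$, and it provides a tilted coordinate system at every dyadic scale through the tangent plane $\pi(x_0,w_0)$ to $\gr(u)$, whose smallness is quantified by the decay \eqref{e:effetto_energia}. In particular, each ball in $\gira_{\bar Q,2}\setminus\{0\}$ of center $(z_0,w_0)$ and radius $\sim|z_0|$ looks, after tilting via $\pi(z_0,w_0)$, like a scale at which the machinery of \cite{DS3,DS4} can be applied to $T$.

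The concrete construction proceeds through a Whitney-type decomposition of $\gira_{\bar Q,2}\setminus\{0\}$ into dyadic squares $L$ of side $\ell(L)$ comparable to $\dist(L,0)$, refined by three stopping conditions (excess, height, neighbour), exactly as in \cite{DS3}, but with thresholds of the form $\bmo\,\ell(L)^{2-2\delta}$ for a small $\delta>0$, so that only finitely many refinements occur at each scale and the non-refined squares cover the whole punctured disk. On each non-refined square $L$ one applies the strong multi-valued Lipschitz approximation of \cite{DSS2} to $T$ on the tilted cylinder $\P^{-1}(\Psii(L))$, solves a Dirichlet problem with this approximation as boundary datum, averages via $\etaa$ and tilts back, obtaining a single-valued smooth sheet. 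A $C^{3,\gamma_0}$ partition of unity subordinate to the Whitney cover glues these sheets into the map $\phii:\gira_{\bar Q,2}\to\R^n$ whose graph is $\cM$; its extension to $0$ is continuous with $\phii(0)=0$, and the powers $|z|^{\gamma_0}$ appearing in part (i) come from the dyadic telescoping of the tilting planes, whose differences sum geometrically thanks to Assumption~\ref{induttiva}(Dec). The $b$-separation of $\cM$ is inherited from (Sep) of $u$ because $\phii$ is $\bmo^{1/2}|z|^{\gamma_0}$-close to $u$ in $C^1$.

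The normal approximation $\NN$ is then defined by composing the nearest point projection $\P$ of \cite[Lemma~2.1]{DSS3} with the reading of the $Q$ local sheets of $T$ in the normal bundle of $\cM$, which makes (ii) automatic. The bounds (iii)--(v) are obtained by summation over Whitney squares: the Lipschitz estimate \eqref{e:Lip_N} and the mass error \eqref{e:diff masse} reduce to the corresponding square-by-square estimates from \cite{DSS2}, combined with the standard control on the total excess contained in the refined squares. The weighted $L^1$ bound \eqref{e:media_pesata} on $\etaa\circ\NN$ requires in addition the quadratic cancellation coming from the Euler--Lagrange equation for the mass (or for the calibrating form $\omega$ in case (b)) at the harmonic replacement, treated exactly as in \cite{DS4}, with the weight $|z|^{\gamma_0-1}$ absorbed by pairing squares of comparable size in the dyadic decomposition.

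I expect the main obstacle to be ensuring that every estimate carries a strictly positive extra power $|z|^{\gamma_0}$ near the conical singularity: the naive transcription of the flat construction only produces scale-invariant bounds, and then the weighted integrals defining $\bF$ and $\bLambda$ would diverge, making the blow-up analysis of the subsequent sections impossible to start. The gain must come from a careful interaction between three ingredients: the uniform-in-scale decay \eqref{e:effetto_energia}, which must be invoked at each dyadic annulus rather than only globally; the conformal estimates \eqref{e:conformal2}--\eqref{e:conformal5}, which let one convert Euclidean bounds on $\P$, $\Psii$ and the tilting planes into intrinsic bounds on $\gira_{\bar Q,2}$ without losing the $|z|^{\gamma_0}$ factor; and a delicate choice of the exponent $\delta$ and of the stopping thresholds so that the geometric series arising in the telescoping sums are all dominated by $\bmo\,|z|^{\gamma_0}$.
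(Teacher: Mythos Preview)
The present paper does not contain a proof of this statement: Theorem~\ref{t:cm} is explicitly quoted as \cite[Theorem~2.6]{DSS3} and is imported here only for the reader's convenience (see the sentence preceding the theorem and the discussion at the end of Section~\ref{s:2_pezzi}, where it is stated that ``the first part of the program is the one achieved in \cite{DSS3}''). There is therefore no in-paper proof to compare your proposal against.

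That said, your outline is a faithful description of the strategy one expects in \cite{DSS3}, and it matches what the paper signals: a transplant of the \cite{DS3,DS4} center manifold construction to $\gira_{\bar Q}$, using the Whitney-type decomposition, the three stopping conditions, the tilted Lipschitz approximations of \cite{DSS2}, and the conformal parametrization of Proposition~\ref{p:conformal} to handle the conical singularity. Your identification of the main difficulty---securing the extra $|z|^{\gamma_0}$ gain at every scale so that $\bF$ and $\bLambda$ are finite---is also on point, and the three ingredients you list (the decay (Dec), the conformal estimates, and the calibration of thresholds) are precisely the ones the construction must balance. As a high-level sketch this is correct; the substance of the argument, however, lies entirely in \cite{DSS3}, and a genuine verification would require checking the details there rather than in the present note.
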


The second main step is the analysis of the asymptotic behaviour of $\NN$ around the origin, which is the main focus of this
paper. 

\begin{remark}\label{r:W12}
In order to state it, we agree to define $W^{1,2}$ functions on $\gira$ in the following fashion: removing the origin $0$ from
$\gira$ we have a $C^3_{loc}$ (flat) Riemannian manifold embedded in $\R^{4}$ and we can define $W^{1,2}$ maps on it following \cite{DS1}. Alternatively we can use the conformal parametrization $\bW: \R^2 = \C \to \gira_{\bar Q}$ given by $\bW (z)= (z^{\bar Q}, z)$ and agree that $u\in W^{1,2} (\gira_{\bar Q})$ if $u\circ \bW$ is in $W^{1,2} (\R^2)$. Since discrete sets have zero $2$-capacity, it is immediate to verify that these two definitions are equivalent.

In a similar fashion, we will ignore the origin when integrating by parts Lipschitz vector fields, treating $\gira_{\bar Q}$ as a $C^1$ Riemannian manifold. It is straightforward to show that our assumption is correct, for instance removing a disk of radius $\varepsilon$ centered at the origin, integrating by parts and then letting $\varepsilon\downarrow 0$.
\end{remark}

\begin{theorem}[Blowup Analysis]\label{t:bu}
Under the assumptions of Theorem~\ref{t:cm}, the following dichotomy holds:
\begin{itemize}
\item[(i)] either there exists $s>0$ such that $\NN\vert_{B_s} \equiv Q \a{0}$;
\item[(ii)] or there exist constants $I_0>1$, $a_0, \bar r, C>0$ and an
$I_0$-homogeneous nontrivial Dir-minimizing
function $g: \gira_{\bar{Q}} \to \I{Q}(\R^{2+n})$ such that
\begin{itemize}
\item $\etaa \circ g \equiv 0$, 
\item $g = \sum_i \a{(0, \bar{g}_i , 0)}$, where $\bar{g}_i (x)\in \R^{\bar{n}}$ and $(0, \bar g_i (x), 0)\in \R^2\times \R^{\bar n} \times \R^l$,
\item and the following estimates hold:
\begin{align}
\cG\big(\NN(z,w),g(z,w)\big) \leq &\; C |z|^{I_0+a_0}\quad\qquad\qquad\qquad \forall\;(z,w) \in \gira_{Q},
\;|z|<\bar r,\label{e:uniform convergence}\\
\int_{B_{r+\rho}\setminus B_{r-\rho}}|D\NN|^2 \leq &\; C\,r^{2I_0+a_0} + C\,r^{2I_0-1}\,\rho
\qquad \forall\;  4\,\rho\leq r < 1,\label{e:energia N}\\
\bH(r) \leq &\; C\,r\,\bD(r)\quad\qquad\qquad\qquad\; \forall\; r < 1.\label{e:Poincare0}
\end{align}
\end{itemize}
\end{itemize}
\end{theorem}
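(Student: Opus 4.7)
The argument follows the classical Almgren frequency function blow-up strategy, adapted to the branched center manifold setting. Define the Almgren-type frequency
\[
\bI(r) := \frac{r\,\bD(r)}{\bH(r)}\qquad\text{for } r \in ]0,1[ \text{ with } \bH(r)>0,
\]
together with an auxiliary quantity that absorbs the $\bF(r)$ correction (essentially $r(\bD(r)+r\bF'(r))/(\bH(r)+\bF(r))$). The first step is to prove an \emph{almost-monotonicity} inequality for $\bI$: testing the first variation of the area-minimizing current $\bT_\FF$ with outer variations ($\NN \mapsto (1+\epsilon\chi)\NN$) and inner variations ($\NN \mapsto \NN(\cdot + \epsilon X)$, $X$ the radial vector on $\gira_{\bar Q}$), one derives integral identities for $\bD$ and $\bH$ with errors controlled by (a) the second fundamental form of $\cM$ (Theorem \ref{t:cm}(i)), (b) the deviation of the conformal factor $\lambda$ from $1$ (Proposition \ref{p:conformal}), (c) the ambient curvature $\bc$, and (d) the mass discrepancy \eqref{e:diff masse}. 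Crucially, \eqref{e:media_pesata} is used to absorb the errors coming from the non-vanishing average $\etaa\circ\NN$, since otherwise these would spoil monotonicity. The outcome is $\bI'(r) \geq -C\bLambda^{\eta_0}(r)\,r^{-1}$ (or an equivalent statement on the corrected frequency), which is integrable near $0$ and thus yields a finite limit $I_0 := \lim_{r\to 0^+}\bI(r) \in [1,\infty[$.

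\textbf{Dichotomy and blow-up.} If $\bH(r_0) = 0$ for some $r_0 > 0$, then a unique-continuation-type argument (essentially the almost-monotonicity of $r \mapsto \log\bH(r)$ on the region where $\bH>0$) combined with $\etaa\circ\NN$ being small will force $\NN \equiv Q\a{0}$ on $B_s$ for some $s$, giving alternative (i). Otherwise, set $\NN_k(\zeta) := \NN(r_k\zeta)/\sqrt{\bH(r_k)/r_k}$ (using the natural homogeneous rescaling on $\gira_{\bar Q}$, which at the level of the second coordinate means $w \mapsto r_k^{1/\bar Q}\cdot$), so that $\int_{\partial B_1}|\NN_k|^2 = 1$ and $\int_{B_1}|D\NN_k|^2 \leq I_0 + o(1)$. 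Extracting a subsequence converging weakly in $W^{1,2}(\gira_{\bar Q})$ and strongly in $L^2$ to a limit $g$, I would show: (a) $g$ is \emph{Dir-minimizing} by using the mass approximation \eqref{e:diff masse} to transfer competitors for $g$ into competitors for $\bT_\FF$ and then compare with $T$; (b) $g$ is $I_0$-\emph{homogeneous}, because the constancy of the frequency at the limit forces the Euler-Lagrange identity characterizing homogeneity; (c) $\etaa\circ g \equiv 0$ from \eqref{e:media_pesata}; (d) $g$ takes values in $\{0\}\times\R^{\bar n}\times\{0\}$, because $\NN_i$ is normal to $\cM$, $\cM$ is $C^1$-close to $\pi_0$ at $0$ (so the horizontal $\R^2$-component vanishes in the limit), and the $\R^l$-component vanishes because $T \subset \Sigma$ forces $\NN$ to live $\bmo^{1/2}$-close to $T_0\Sigma$ on infinitesimal scales. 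Finally $I_0 > 1$ follows because an $I_0 = 1$ blow-up would be a non-trivial harmonic map with values in a linear space and with zero average, but then the separation/horned-neighborhood assumption (Hor) combined with the orthogonality to $\cM$ would produce a tangent plane to $T$ at $0$ other than the one given by Assumption \ref{ipotesi_base_2}(ii), a contradiction.

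\textbf{Decay estimates and uniqueness of the blow-up.} Once the limit is identified, the quantitative estimates \eqref{e:uniform convergence}--\eqref{e:Poincare0} are obtained as follows. The Poincar\'e inequality \eqref{e:Poincare0} is a direct consequence of $I_0 > 1$ applied through the almost-monotonicity: $\bH(r)/r = \bD(r)/\bI(r) \leq C\bD(r)$. The annular energy estimate \eqref{e:energia N} follows from the sharp growth $\bD(r) \leq C r^{2I_0}$, which is obtained by integrating the differential inequality for $\bI$ and using $\bI(r) \geq I_0 - C r^{a_0}$ with some rate $a_0 > 0$ (the exponent $a_0$ coming from the integrable error term in the monotonicity). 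The main obstacle, and the hardest step, is \eqref{e:uniform convergence}: this is the \emph{uniqueness of the blow-up with a polynomial rate}. It requires showing that two consecutive rescalings $\NN_{r}$ and $\NN_{2r}$ differ by $O(r^{a_0})$ in $L^2(\partial B_1)$. Following the Reifenberg/Simon approach, one compares $\NN$ with the $I_0$-homogeneous extension of its boundary trace at scale $r$ through a \L ojasiewicz-type inequality for the Dirichlet energy on the class of Dir-minimizing $Q$-valued maps on the branched cone $\gira_{\bar Q}$ of fixed frequency $I_0$. Combined with the frequency almost-constancy and a dyadic iteration, this produces the uniform convergence with rate \eqref{e:uniform convergence}, and the chosen $g$ in the statement is the limit.
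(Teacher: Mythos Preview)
Your overall strategy is in the right spirit, but several steps diverge from the paper in ways that matter, and two of them are genuine gaps.

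\textbf{Order of Poincar\'e.} In the paper the Poincar\'e inequality \eqref{e:Poincare0} is proved \emph{first} and independently (Theorem~\ref{t:poincare}), using only the outer variation identity and the almost-minimizing property with a harmonic competitor. It is then used as an input throughout the frequency analysis: to control $\bF$ by $\bD$, to bound $\bLambda$ by $\bD$, and to make the error terms $\mathcal{E}_{OV}, \mathcal{E}_{IV}$ integrable. Your plan reverses this, deriving Poincar\'e from $I_0>1$ at the end. This creates a circularity risk: without Poincar\'e you cannot close the error estimates needed for the almost-monotonicity in the first place.

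\textbf{The bound $I_0>1$.} Your tangent-plane contradiction argument is both vague and unnecessary. In the paper $I_0>1$ is a one-line consequence of the a priori decay $\bD(r)\le C r^{2+2\gamma_0}$ (from \eqref{e:Lip_N}--\eqref{e:Ndecay}) combined with the decay theorem $\bD(r)\sim D_0 r^{2I_0}$, which forces $2I_0\ge 2+2\gamma_0$.

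\textbf{Uniqueness of the blow-up.} This is the main gap. The paper does \emph{not} use any \L ojasiewicz--Simon inequality; such an inequality is not available for $Q$-valued maps on the branched disk $\gira_{\bar Q}$. Instead, once the quantitative frequency decay $|\bI(r)-I_0|+|\bD(r)r^{-2I_0}-D_0|\le Cr^{\lambda}$ is established (Theorem~\ref{t:decay}), one computes directly
\[
\int_{\partial B_1}\cG(f_r,f_{r/2})^2 \;\le\; \frac{r}{2}\int_{r/2}^r\Big(\frac{\bG(t)}{t^{2I_0+1}}-2I_0\frac{\bE(t)}{t^{2I_0+2}}+I_0^2\frac{\bH(t)}{t^{2I_0+3}}\Big)dt \;\le\; Cr^{\lambda},
\]
where the middle expression is rewritten via the inner/outer variation identities and then estimated using the decay of $\bD(r)r^{-2I_0}$ and $\bI(r)-I_0$. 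A dyadic sum gives $L^2$ convergence with rate; uniform convergence is then obtained by a Campanato-type argument on the auxiliary function $h(z,w)=\cG(\NN(z,w)|z|^{-I_0},\NN(i_{1/2}(z,w))|z/2|^{-I_0})$.

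\textbf{Semicalibrated case.} You do not mention that in case (b) the frequency must be modified to $\bbI(r)=r\bOmega(r)/\bH(r)$ with $\bOmega=\bD+\bL$, where $\bL$ is a bilinear term in $\NN$ and $D\NN$ against $d\omega$. Without this correction the outer variation identity does not close with the required accuracy.
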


\begin{remark}\label{r:ultimo_step}
Note that, when $\bar{Q} = \Theta (T, 0)$, we necessarily have $Q=1$ and the second 
alternative is excluded. In particular we conclude that $T$ coincides with $\a{\cM}$ in a neighborhood of $0$ and
thus it is a regular submanifold in a {\em punctured neighborhood} of $0$.
\end{remark}

\begin{remark}\label{r:bound_energia}
By a simple dyadic argument it follows from \eqref{e:energia N} and \eqref{e:Poincare0} that
\begin{equation}\label{e:consequences}
\int_{B_r}|D\NN|^2 \leq C\,r^{2I_0}
\quad\text{and}\quad
\bF(r) \leq C\,r^{2I_0+\gamma_0}
\quad  \forall\;r < 1.
\end{equation}
\end{remark}

Below we show how to conclude Theorem~\ref{t:induttivo} from Theorem \ref{t:bu}.
The remaining part of the paper is dedicated to the proof of the latter, which will be split in six sections each corresponding to one of the following steps. 
\begin{itemize}
\item[(i)] In Section \ref{s:AM} we will deduce an almost minimizing property for the map $\NN$ in terms of its Dirichlet energy.
\item[(ii)] In Section \ref{s:harmonic} we will apply the almost minimizing property and compare the Dirichlet energy of $\NN$ with that of a suitable harmonic extension of its boundary value on any given ball.
\item[(iii)] In Section \ref{s:poincare} we use the comparison above and a first variation argument to derive a suitable Poincar\'e-type inequality for $\NN$.
\item[(iv)] In Section \ref{s:refined} we compute again the first variations of the Dirichlet energy of $\NN$ and use the Poincar\'e inequality to bound efficiently several error terms.
\item[(v)] Using the latter bounds, in Section \ref{s:decay} we will prove an almost monotonicity property for the frequency function and the existence and boundedness of its limit, which is indeed the number $I_0$ of Theorem \ref{t:bu}. The almost minimality of $\NN$ will then allow us to conclude an exponential rate of decay to this limit.
\item[(vi)] From the decay of the previous step we will capture in Section \ref{s:bu} the asymptotic behaviour of $\NN$ and show the existence of the map $g$ of Theorem \ref{t:bu}.  
 \end{itemize}
The overall strategy follows the ideas and some of the computations in \cite{Chang}. However several adjustments are needed
to carry on the proof in the cases (b) and (c) of Definition \ref{d:semicalibrated}. In particular in Section \ref{s:decay} we need to introduce a suitable modification of the usual frequency function to handle case (b).

\subsection{Proof of the inductive step} In the next sections we will prove Theorem \ref{t:bu}.
We start observing that if case (a) of Theorem~\ref{t:induttivo}
does not hold, then we are necessarily in case (ii) of
Theorem~\ref{t:bu}. Therefore we only need to prove that
Theorem~\ref{t:bu}(ii) implies Theorem~\ref{t:induttivo}(b).

We divide the proof in different steps.

\medskip

\noindent{\bf Step 1.} For  a reason which will become clear later, it is convenient to slightly modify the map $g$ to a multivalued map
$n (z,w) = \sum_i \a{n_i (z,w)}$ in such a way that $n_i (z,w)$ is orthogonal to $\cM$ at $\Psii (z,w)$. To achieve this it suffices to project
$g_i (z,w) = (0, \bar{g}_i (z,w),0)$ on the normal bundle. Observe that, by the estimates on $|A_\cM|$ and $\Psii$, we have 
\begin{align}
|g_i (z,w) - n_i (z,w)|\leq & C C_i |z|^{\gamma_0} |g_i (z,w)|\, ,\label{e:proietta_1}\\
|Dn| (z,w) \leq & |Dg| (z,w) + C C_i |z|^{\gamma_0 -1} |g| (z,w)\,\label{e:proietta_2}.
\end{align}
We introduce the function
$H: \gira_{\bar Q} \to \I{Q}(\R^{2+n})$ given by
\[
H(z,w) = \sum_{i=1}^{Q}\a{H_i(z,w)} := \sum_{i=1}^{Q}
\a{\Psii(z,w) + n_i (z,w)}\, . 
\]

Note that, since $g$ is $I_0$-homogeneous and $\D$-minimizer, by \cite[Proposition 5.1]{DS1} there is a constant $C$ such that
\begin{equation}\label{e:separazione omogenea 1}
|g_i(z,w) - g_j(z,w)| \geq 2 C\,|z|^{I_0} \quad\text{whenever }g_i(z,w) \neq g_j(z,w)\, .
\end{equation}
In fact \cite[Proposition 5.1]{DS1} is stated for maps with domain $\bC = \R^2$. However, if we define the map $\bW: \C \to \gira_{\bar Q, \infty}$ as $\bW (z) = (z^{\bar Q}, z)$, by the conformality of $\bW$ it is easy to check that $g\circ \bW$ is $\D$-minimizer and $I_0 Q$ homogeneous. 

By \eqref{e:proietta_1} and \eqref{e:separazione omogenea 1}, provided $z$ is small enough we have
\begin{equation}\label{e:separazione omogenea}
|H_i(z,w) - H_j(z,w)| \geq C\,|z|^{I_0} \quad\text{whenever }H_i(z,w) \neq H_j(z,w).
\end{equation}
Let $\bar a\in ]0, a_0[$ be a constant to be fixed momentarily and 
$\zeta := I_0 + \sfrac{\bar a}{2}>1$. Set
\[
\bV_{H, \zeta} := \big\{ H_i(z,w) + p \in \R^{2+n} :
|p|< |z|^\zeta,\; i=1,\ldots, Q\big\}.
\]
We claim that there exists $s>0$ such that $\supp(T) \cap \bB_s\setminus \{0\} \subset \bV_{H,\zeta}$.

In order to prove this claim, we distinguish two cases.
First we consider any point $p \in \supp(T) \cap \supp(\bT_\FF)\setminus \{0\}$. In this case $p = \Psii(z,w) + \NN\, _i(z,w)$ for some 
$(z,w) \in \gira_{\bar Q}\setminus \{0\}$ and 
for some $i=1,\ldots, Q$.
Without loss of generality, by \eqref{e:uniform convergence} we can assume
$|\NN\,_i(z,w) - g_i(z,w)| \leq   C |z|^{I_0+\bar a}$, i.e.
\begin{align}\label{e:uniform graph}
|p-H_i(z,w)| & = |\NN\,_i (z,w) - n_i (z,w)|\leq |\NN\,_i (z,w)-g_i (z,w)|+ |g_i (z,w)- n_i (z,w)|\nonumber\\
&\leq
C |z|^{I_0+\bar a} + C|z|^{\gamma_0+I_0},
\end{align}
which in particular implies
$\supp(T) \cap \supp(\bT_\FF) \cap \bB_s \subset \bV_{H,\zeta}$
if $s$ is sufficiently small and we impose $\frac{\bar a}{2}< \gamma_0$.

For the second case we consider a point $p \in \supp(T) \setminus \supp(\bT_\FF)$
and assume by contradiction that $p \not\in \bV_{H,\zeta}$.
In particular, in view of \eqref{e:uniform graph} we have that 
\begin{equation}\label{e:grafico_escluso}
B:=\bB_{\frac{|z|^{\zeta}}{2}}(p) \cap \supp(\bT_\FF) = \emptyset
\end{equation}
if $|z|$ is sufficiently small.
By the monotonicity formula we know that $\|T\|(B) \geq C\, |z|^{2\zeta}$;
nevertheless since $B \subset \P^{-1}(B_{2|z|}\setminus B_{\sfrac{|z|}{2}})$, \eqref{e:grafico_escluso}
implies $\|T\| (B) \leq \|T - \bT_{\FF}\| (B)$ and 
from \eqref{e:diff masse} and \eqref{e:consequences} we conclude
$\|T\|(B) \leq C\, |z|^{2I_0+2\,\kappa}$ with $\kappa = \min\{2\,\eta_0\,I_0, \gamma_0\}$.
This gives a contradiction if $\bar a < 2\kappa$.

\medskip

\noindent{\bf Step 2.}
From the previous step we can infer that $g$
is a constant multiple of an irreducible
function, namely there exists $Q'>0$ such
that $\card(g(z,w)) = Q'$ for every $(z,w) \neq (0,0)$ and 
there exists a continuous map $h: \gira_{\bar Q Q'} \to \R^{2+n}$ such that
\begin{equation}\label{e:irreducibility}
g(z,w) = \frac{Q}{Q'} \sum_{\tilde z = z, \; \tilde w^{Q'} = w} \a{h(\tilde z, \tilde w)}.
\end{equation}
If this is not the case, by  
\cite[Proposition~5.1]{DS1} we can decompose $g$
in the superposition of irreducible functions, i.e.
there exists a unique decomposition $g=\sum_{j=1}^J k_j g_j$
where $g_j:\gira_{Q} \to \I{q_j}(\R^{n})$ are Dir-minimizing
$I_0$-homogeneous functions, for some choice of positive integers $J,k_j, q_j$ such that $\sum_{j=1}^Jk_jq_j=Q$.

Denoting by $H^j$ the corresponding maps
\[
H^j(z,w) := \sum_{l=1}^{q_j}\a{\Psii(z,w) + (n^j)_l(z,w)}
\]
and by $\bV_{H^j,\zeta}$ the corresponding horned neighborhoods
\[
\bV_{H^j, \zeta} := \big\{ (H^j)_l(z,w) + p \in \R^{2+n} :
|p|< |z|^\zeta,\; l=1,\ldots, q_j\big\},
\]
it follows from \eqref{e:separazione omogenea} that the closures of the
$V_{\zeta, H_i}$ intersect only at the origin.
Setting $T_i := T\res V_{\zeta, H_i}$, we infer that $T= \sum_i T_i$
with $\supp(T_i) \cap \supp(T_j) = \{0\}$, against the irreducibility of $T$.
Note that, since $\etaa\circ g=0$ it also follows that $Q'>1$.

Having established \eqref{e:irreducibility}, let us define $\Thetaa:\gira_{\bar QQ'} \to \R^{2+n}$ as
\[
\Thetaa(\tilde z,\tilde w) := \Psii (\tilde z, \tilde w^{Q'}) + h^n (\tilde z, \tilde w)
\quad \forall\;(\tilde z, \tilde w) \in \gira_{\bar QQ'}\, ,
\]
where $h^n (\tilde{z}, \tilde w)$ is the projection of $h (\tilde{z}, \tilde{w})$ on the space normal to
$\cM$ at the point $\Psii (\tilde{z}, \tilde w^{Q'})$. 
It follows that
$\im(H) = \im(\Thetaa)$ is an admissible $\bar Q Q'$-branching (the H\"older regularity for the graphical parametrization follows from the fact that $I_0 >1$). 
Moreover, from the homogeneity of $g$ we easily infer that $\im (\Thetaa)$
is $I_0$-separated (for a suitable constant $c_s$).
Note that for $\zeta':=I_0+\bar a/4$ and $s$ sufficiently small
$\bV_{H,\zeta}\cap\bB_s\subset \bV_{\Thetaa,\zeta'}\cap \bB_s$.

\medskip

\noindent{\bf Step 3.}
Finally we prove the condition (Dec) of Assumption~\ref{induttiva}.
Let $(z,w) \in \gira_{\bar Q}$ with $0<|z|<\sqrt{2}$,
let $V$ be the connected component of $\bV_{\Thetaa, \zeta'}\cap \{(x,y): |x-z| < d\}$
with $d:=\sfrac{|z|}{4}$ containing $\Thetaa(z,w)$,
and $p\in \supp(T) \cap V$ with coordinates $p=(z,y)$.
Denote by $\pi$ the oriented two-vector for $\im(\Thetaa)$ at $\Thetaa(z,w)$,
and consider $\rho \in [\frac12 d^{\sfrac{(I_0+1)}{2}},d]$.

Since $\bB_{\rho}(p)\cap \supp (T) \subset \P^{-1}(\Psii (B_{|z|+2\rho}\setminus B_{|z|-2\rho}))$,
we start estimating as follows
\begin{align}
\int_{\bB_\rho(p)} |\vec T - \vec\pi|^2 \, d\|T\res V\| &\leq 
\int_{\bB_\rho(p)\cap V} |\vec \bT_\FF - \vec\pi|^2 \, d\|\bT_\FF\| + \|T-\bT_\FF\|(\p^{-1}(B_{|x_0|+2\rho}))\notag\\
& \stackrel{\eqref{e:diff masse}}{\leq} \int_{\bB_\rho(p)\cap V} |\vec \bT_\FF - \vec\pi|^2 \, d\|\bT_\FF\| + C\,|z|^{2I_0 
+ 
2 \kappa}.
\end{align}
Next, note that for $|z|$ small enough $\P(\bB_\rho(p)\cap \bV_{\Thetaa, \zeta'}) \subset \Psii (B_{2\rho}(z,w))$.

We can consider the set of indices $A \subset \{1,\ldots,Q\}$ such
that $\FF_i(z,w) \in V$ for $i \in A$ and estimate as follows
\begin{align}
\int_{\bB_\rho(p)\cap V} |\vec \bT_\FF - \vec\pi|^2 \, d\|\bT_\FF\| & \leq
C\sum_{i\in A}\int_{B_{2\rho}(z,w)} |\vec \bT_{\FF_i (\zeta, \omega)} - \vec \bT_{\Thetaa (\zeta, \omega)}|^2\, d\zeta
+C\,\rho^2\,\Lip(D\Thetaa\vert_{B_{2\rho}(z,w)})^2\notag\\
& \leq C\sum_{i\in A}\int_{B_{2\rho}(z,w)} |\vec \bT_{\FF_i (\zeta, \omega)} - \vec \bT_{\Psii (\zeta, \omega)}|^2\, d\zeta\notag\\
&\quad+ C\,\int_{B_{2\rho}(z,w)} |\vec \bT_{\Psii (\zeta, \omega)} - \vec \bT_{\Thetaa (\zeta, \omega)}|^2\, d\zeta + C\,\rho^4\,|z|^{2\theta-2}, 
\end{align}
where $\theta:=\min\{\gamma_0, I_0-1\}$ and 
we used that $|D^2\Thetaa|(z,w) \leq C\,|z|^{\theta-1}$.

We can finally use the computation of 
the excess in curvilinear coordinates in \cite[Proposition~3.4]{DS2} to get
\begin{align}
\sum_i\int_{B_{2\rho}(z,w)} |\vec \bT_{\FF_i (\zeta, \omega)} - \vec \bT_{\Psii (\zeta, \omega)}|^2 &
\leq C\int_{B_{2\rho}(z,w)} \left(|D\NN|^2 + |\zeta|^{2\gamma_0-2} |\NN|^2\right)
\notag\\
&\stackrel{\eqref{e:consequences}}{\leq} C\int_{B_{|z|+2\rho}\setminus B_{|z|-2\rho}} |D\NN|^2 + C\,|z|^{2I_0+2\gamma_0}\\
&\stackrel{\eqref{e:energia N}}{\leq} C\,|z|^{2I_0+a_0}+C\,|z|^{2I_0-1}\,\rho\,,
\end{align}
and similarly
\begin{align}
\int_{B_{2\rho}(z,w)} |\vec \bT_{\Thetaa (\zeta, \omega)} - \vec \bT_{\Psii (\zeta, \omega)}|^2&
\leq C\int_{B_{2\rho}(z,w)} \left(|Dn|^2 + |\zeta|^{2\gamma_0-2} |n|^2\right)\notag\\
& \leq C\int_{B_{2\rho}(z,w)} \left(|Dg|^2 + |\zeta|^{2\gamma_0-2} |g|^2\right)\notag\\
&\leq C\,|z|^{2I_0-2}\rho^2+C\,|z|^{2I_0+2\gamma_0}\, 
\end{align}
(observe that, in order to apply \cite[Proposition~3.4]{DS2} we need that $n$ takes value into the
normal bundle). 

Collecting all the estimates together, we have that there exists a suitable
constant $\varpi$ such that
\begin{equation}\label{e:dec finale}
\int_{\bB_\rho(p)} |\vec T - \vec\pi|^2 \, d\|T\res V\| \leq
C\,|z|^{2I_0 + 2 \varpi} + C\,\rho\,|z|^{2I_0-1} + C\,\rho^4\,|z|^{2\varpi-2}
\leq |z|^{\gamma-2}\rho^4,
\end{equation}
where the last inequality is easily verified
for $\gamma>0$ and $|z|$ small enough. This shows (Dec) in Assumption \ref{induttiva} and completes the proof.

\section{Dirichlet almost minimizing property}\label{s:AM}

The normal approximation $\NN$ inherits from $T$ an almost minimizing property for the Dirichlet energy, where the errors involved are in
fact expressed in terms of some specific norms of $\NN$
itself and of its competitors.

For techincal reasons we introduce the maps $F  := \sum_{i=1}^{Q}\a{p + N_i(p)}$, where $N := \NN \circ \Psii^{-1}$. In order to state the almost minimizing property of $\NN$ we introduce an appropriate notion of competitor.

\begin{definition}\label{d:competitors}
A Lipschitz map $\LL\colon B_r\to \I{Q}(\R^{n+2})$ is called
a competitor for $\NN$ in the ball $B_r$ if 
\begin{itemize}
\item[(a)] $\LL \vert_{\partial B_r}= \NN\vert_{\de B_r}$;
\item[(b)] $\supp(\GG(z, w)) \subset \Sigma$ for all $(z,w)\in B_r$, where $\GG(z,w) := \sum_{j=1}^{Q}\a{\Psii(z,w) +\LL_j(z,w)}$.
\end{itemize}
\end{definition}

We are now ready to state the almost minimizing property for $\NN$. We use the notation $\p_{T_p \Sigma}$ for the orthogonal projection on the tangent space to $\Sigma$ at $p$. We recall that, given our choice of coordinates, $\p_{T_0 \Sigma}$ is the projection on $\R^{2+\bar{n}}\times \{0\}$. Since this projection will be used several times, we will denote it by $\p_0$. By the $C^{3,\eps}$ regularity of $\Sigma$, there exists a map $\Psi_0\in C^{3,\eps}(\R^{2+\bar{n}},\R^l)$ such that
\[
\Psi_0 (0)= 0\, ,\; D\Psi_0 (0)=0\quad \textup{and}\quad \gr (\Psi_0) = \Sigma\,.
\]
Next, for each function $\LL$ satisfying Condition (b) in Definition \ref{d:competitors} we consider the map $\bar\LL := \p_0 \circ \LL$, which is a multivalued $\bar\LL: \gira \to \Iq (\R^{2+\bar{n}})$. We observe that it is possible to determine $\LL$ from $\bar\LL$. In particular, fix coordinates
$(\xi, \eta)\in \R^{2+\bar{n}}\times \R^{l}$ and let $\LL = \sum \a{\LL_i}$, $\bar\LL = \sum \a{\bar \LL_i}$, where $\bar \LL_i = \p_0 \circ \LL_i$. Then the formula relating $\LL_i$ and $\bar \LL_i$ is 
\begin{equation}\label{e:relazione}
\LL_i (z,w) = \big(\bar \LL_i (z,w), \Psi_0 \big(\p_0 (\Psii (z,w)) + \bar\LL_i (z,w)\big) - \Psi_0 (\p_0 (\Psii (z,w))\big)\, .
\end{equation}

\begin{proposition}\label{p:amin}
There exists a constant $C_{\ref{p:amin}}>0$ such that the following holds.
If $r\in (0,1)$ and $\LL\colon B_r\to \I{Q}(\R^{2+n})$ is a
Lipschitz competitor for $\NN$ with $\|\LL\|_\infty \leq r$ and $\Lip (\LL)\leq C_{\ref{p:amin}}^{-1}$, then
\begin{align}\label{e:amin}
\int_{B_r}|D\NN|^2 &\leq
(1+C_{\ref{p:amin}}\,r)\,\int_{B_r} |D\bar \LL|^2 + 
C_{\ref{p:amin}}\,\textup{Err}_1(\NN,B_r) + C_{\ref{p:amin}}\,\textup{Err}_2(\LL,B_r) + C_{\ref{p:amin}}\, r^2 \bD' (r)\, ,
\end{align}
where $\bar \LL:=\p_0 \circ \LL$ and the the error terms
$\textup{Err}_1(\NN,B_r)$, 
$\textup{Err}_2(\LL,B_r)$ are given by the following expressions:
\begin{align}\label{e:error1}
\textup{Err}_1(\NN,B_r) & =
\bLambda^{\eta_0}(r)\,\bD(r)+\bF(r)+\bH(r)
+\bmo^{\sfrac12}\, r^{1+\gamma_0}\int_{\de B_r} |\etaa\circ \NN|\, 
\end{align}
and
\begin{align}\label{e:error2}
\textup{Err}_2(\LL,B_r) & =
\bmo^{\sfrac12}\,\int_{B_r} |z|^{\gamma_0 -1} |\etaa\circ \LL|\,  \,.
\end{align}
\end{proposition}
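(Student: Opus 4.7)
The plan is to convert the almost-minimality of $T$ into an almost-minimality for $\NN$ via the center manifold graphical approximation $\bT_\FF$. First I would use $\LL$ to build a Lipschitz competitor current for $T$ in $\P^{-1}(\Psii(B_r))$: the natural candidate is $\bT_\GG$ with $\GG(z,w) = \sum_j \a{\Psii(z,w) + \LL_j(z,w)}$, which by hypothesis (b) is supported in $\Sigma$. Since $\GG$ and $\FF$ agree on $\partial B_r$ (as $\LL|_{\partial B_r} = \NN|_{\partial B_r}$) but $\bT_\FF$ in general differs from $T$, I would patch the two currents in a thin boundary layer using an isotopy/interpolation between $\FF$ and $\GG$ across $\de B_r$, producing an integral current $Z$ supported in $\Sigma$ with $\partial Z = \partial (T \res \P^{-1}(\Psii(B_r)))$. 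The cost of this interpolation is controlled by Theorem~\ref{t:cm}(v), giving a term of order $\bLambda^{\eta_0}(r)\bD(r)+\bF(r)$, plus a boundary layer term that by a standard annular foliation estimate contributes $\bH(r)$ and $r^2\bD'(r)$.

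\textbf{Applying the almost-minimality.} In case (a) I would apply area-minimality directly: $\mass(T\res\P^{-1}(\Psii(B_r))) \leq \mass(Z)$. In case (b), I would use $T \res \P^{-1}(\Psii(B_r)) - Z = \partial R$ for some $(m+1)$-current $R$ with small mass, and exploit $\omega(\vec T) = 1$ together with $\omega(\vec Z) \geq 1 - C|d\omega|\,\diam\,$ plus Stokes; the defect from the semicalibration produces precisely the extra term $\bmo^{\sfrac12} r^{1+\gamma_0}\int_{\partial B_r}|\etaa \circ \NN|$ (via the average-value mechanism, since $\omega - \omega_0$ contracts with $\vec T$ against the height of $\NN$, and the gradient term is handled by integration by parts, picking up the boundary term in $\etaa\circ\NN$ and the interior term in $\etaa \circ \LL$). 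Case (c) reduces to (a) after coning. In each case, the comparison reads
\[
\mass(\bT_\FF \res \P^{-1}(\Psii(B_r))) \le \mass(\bT_\GG) + C\,\textup{Err}_1(\NN,B_r) + C\,\textup{Err}_2(\LL, B_r) + Cr^2\bD'(r).
\]

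\textbf{Taylor expansion of mass.} Next I would replace the masses of $\bT_\FF$ and $\bT_\GG$ by Dirichlet-type expressions. Using the curvilinear Taylor expansion of the area functional for $Q$-valued graphs on the $\bar Q$-branching $\cM$ (cf.\ \cite[Theorem 3.2]{DS2} or its analogue used in Section~\ref{s:2_pezzi}, combined with the conformal factor estimates of Proposition~\ref{p:conformal} and the curvature bounds of Theorem~\ref{t:cm}(i)),
\[
\mass(\bT_\FF \res \P^{-1}(\Psii(B_r))) = Q\,\mass(\cM \res \Psii(B_r)) + \tfrac12 \int_{B_r}|D\NN|^2 + E_\NN,
\]
with $|E_\NN|$ absorbed into $\textup{Err}_1$ by the curvature estimate $|A_\cM|\le C\bmo^{1/2}|z|^{\gamma_0-1}$ and the Lipschitz bound \eqref{e:Lip_N}. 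The analogous expansion for $\bT_\GG$ gives an upper bound $\tfrac12\int_{B_r}|D\LL|^2(1+Cr) + E_\LL$, where $|E_\LL|$ absorbs into $\textup{Err}_2$. The crucial point is that $\LL$ is $\Sigma$-valued but not $\cM$-normal: the relation \eqref{e:relazione} between $\LL$ and $\bar\LL$ shows $|D\LL|^2 = |D\bar\LL|^2 + |D(\Psi_0\circ(\p_0\Psii + \bar\LL))|^2$, and the second piece is quadratic in $|D\bar\LL|$ with prefactor $\|D\Psi_0\|_{C^0}^2 = O(\bmo)$ near the origin, producing the factor $(1+C r)$ in front of $\int |D\bar\LL|^2$ after using $\|\LL\|_\infty \le r$.

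\textbf{Main obstacle.} The delicate part is the semicalibration case (b), where the lack of a pointwise area comparison forces me to track the error from the form $\omega$ carefully through Stokes' theorem applied to the homology $T - Z = \partial R$. The two-term structure $\textup{Err}_1, \textup{Err}_2$ (one depending on $\NN$ and the boundary, the other on $\LL$ and the interior) arises precisely because $d\omega$ couples with the height of each term to produce an $\etaa$-integral --- the boundary integration by parts moves one copy of $\etaa$ onto $\partial B_r$ giving the $\bH(r)$-type term in $\textup{Err}_1$, while the interior residual keeps the weight $|z|^{\gamma_0 -1}$ coming from the curvature of $\cM$, producing $\textup{Err}_2$. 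Coordinating these with the interpolation layer estimate (which is where the $r^2\bD'(r)$ term arises, by choosing the radius of the layer optimally and integrating the slice inequality) is the technical heart of the argument.
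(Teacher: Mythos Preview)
Your overall architecture (build a competitor current from $\GG$, compare masses, Taylor-expand both sides on $\cM$) matches the paper, and your handling of case (a) is essentially right. But there are two genuine gaps.

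\textbf{The boundary layer is a phantom.} In cases (a) and (b) no interpolation between $T$ and $\bT_\FF$ is needed. Since $\LL|_{\partial B_r}=\NN|_{\partial B_r}$, the current $\bT_\GG-\bT_{\FF|_{B_r}}$ is already a \emph{cycle}, so $Z:=T-\bT_{\FF|_{B_r}}+\bT_\GG$ is a legitimate competitor with $\partial Z=\partial T$ and $\supp(Z)\subset\Sigma$. The discrepancy between $T$ and $\bT_\FF$ is absorbed as a mass error via $\|T-\bT_\FF\|(\P^{-1}(\Psii(B_r)))$, which Theorem~\ref{t:cm}(v) bounds by $\bLambda^{\eta_0}\bD+\bF$. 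No annular patching enters, and in particular the $r^2\bD'(r)$ term does \emph{not} arise here. For case (b) one fills $\bT_{\FF|_{B_r}}-\bT_\GG=\partial S$ by the two straight-line homotopies $t\mapsto p+tN_i(p)$ and $t\mapsto p+(1-t)L_i(p)$ and estimates $S(d\omega)$ directly; no integration by parts sending $\etaa\circ\NN$ to $\partial B_r$ occurs, and the boundary term $\bmo^{1/2}r^{1+\gamma_0}\int_{\partial B_r}|\etaa\circ\NN|$ does \emph{not} come from the semicalibration.

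\textbf{Case (c) is not ``coning reduces to (a)''.} This is where the real work and the two extra error terms live. If you simply cone the 2D competitor, $0\cone\bT_\GG$ does not match $0\cone T$ on the top face $\{\rho=1\}$ of the cone $\cC(r)$: there one needs $K|_{\rho=1}=F$, not $G$. The paper constructs a $3$-dimensional competitor $K$ on $\cC(r)$ by setting $K=\rho\,G$ for $\rho\le 1-2r$ and interpolating in the shell $\rho\in[1-2r,1]$ via a piecewise map $\HH$ on three regions $A_1,A_2,B$ (a rescaled copy of $\NN$, a rescaled copy of $\LL$, and a radial extension of $\NN|_{\partial B_r}$). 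It is precisely the $B$-region contribution, estimated by $r^2\int_{\partial B_r}\chi_3$, that produces both $r^2\bD'(r)$ (from $|D\NN|^2$ on $\partial B_r$) and $\bmo^{1/2}r^{1+\gamma_0}\int_{\partial B_r}|\etaa\circ\NN|$ (from $|z|^{\gamma_0-1}|\etaa\circ\NN|$ on $\partial B_r$). The $A_2$-region contributes the $\frac{C}{r}\int_{B_r}|\LL|^2$ term in the intermediate inequality, which is then converted to $\int|D\bar\LL|^2+\bH(r)$ via the Poincar\'e inequality of Lemma~\ref{l:poinc}. Without this explicit $3$-dimensional interpolation your argument for (c) does not close, and your attribution of the two ``extra'' error terms is misplaced.
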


For the proof of Proposition~\ref{p:amin} we
consider separately the three cases: 
\begin{itemize}
\item[(a)] $T$ is mass minimizing;
\item[(b)] $T$ is semicalibrated;
\item[(c)] $T$ is the cross-section of a mass minimizing three-dimensional cone.
\end{itemize}
For notational convenience we set $L := \LL \circ \Psii^{-1}$, $G :=  \GG \circ \Psii^{-1}$. 

Observe also that, by Lemma \ref{l:poinc} and \ref{l:proiezione}, it is enough to prove that
\begin{equation}\label{e:amin2}
\int_{B_r}|D\NN|^2 \leq
(1+C_{\ref{p:amin}}\,r)\,\int_{B_r} |D \LL|^2 + 
C\,\textup{Err}_1(\NN,B_r) + C \,\textup{Err}_2(\LL,B_r)+\frac{C}{r}\int_{B_r}|\LL|^2 + C r^2\bD'(r)\,.
\end{equation}
Indeed Lemma \ref{l:proiezione} implies that
\begin{align*}
\int_{B_r} |D \LL|^2 \leq & (1+Cr) \int_{B_r} |D \bar\LL|^2 + C r \int_{\partial B_r} |\bar \LL|^2 \leq (1+Cr) \int_{B_r} |D\bar\LL|^2 + Cr \int _{\partial B_r}|\LL|^2\\
= &  (1+Cr) \int_{B_r} |D\bar\LL|^2 + Cr \int_{\partial B_r} |\NN|^2 \leq (1+Cr)\int_{B_r} |D\bar \LL|^2 + C\, \textup{Err}_1 (\NN, B_r)\, ,
\end{align*}
whereas Lemma \ref{l:poinc} implies
\begin{align*}
\frac{1}{r} \int_{B_r} |\LL|^2 \leq & Cr \int_{B_r} |D\LL|^2 + C \int_{\partial B_r} |\LL|^2 \leq C r \int_{B_r} |D\bar\LL|^2 + 
C \, \textup{Err}_1 (\NN,
 B_r)\, .
\end{align*}

\subsection{Proof of Proposition~\ref{p:amin} case (a): $T$ mass minimizing.} We fix $\LL$, $\bar \LL$, $L$, $\GG$, $\bar \GG$ and $G$ as above. Let us set
\begin{equation}\label{e:competitore}
Z:= T - \bT_{\FF|_{B_r}} + \bT_{\GG}\, .
\end{equation}
Since $\FF|_{\partial B_r} = \GG|_{\partial B_r}$, from \cite{DS2} it follows that $\partial (\bT_{\GG} - \bT_{\FF|_{B_r}}) =0$. Moreover $\supp (Z) \subset \Sigma$ and therefore we must have $\mass (T) \leq \mass (Z)$. Taking into account \eqref{e:diff masse}, we conclude that
\begin{align}\label{e:prima}
\mass (\bT_{\FF|_{B_r}}) 
&\leq \mass (T\res \p^{-1} (\Psii (B_r))) + \|T-\bT_{\FF|_{B_r}}\|( \p^{-1} (\Psii (B_r)))\notag\\ 
&\leq  \mass (\bT_{\GG}) +2 \,\|T-\bT_{\FF|_{B_r}}\|( \p^{-1} (\Psii (B_r))) \notag\\
&\leq  \mass (\bT_{\GG})+C\, \textup{Err}_1 (\NN, B_r) \, .
\end{align}
Observe now that $\bT_{\FF|_{B_r}} = \bT_{F|_{\Psii (B_r)}}$ and we can use the Taylor expansion in \cite{DS2} to compute:
\begin{align}\label{e:mass1}
\mass(\bT_{\FF|_{B_r}}) &\geq Q \cH^2(\Psii (B_r))+\frac{1}{2}\int_{\Psii (B_r)}|D N|^2 -
Q\int_{\Psii (B_r)} \langle \etaa\circ N,H_\cM\rangle\notag\\
&\quad -C\int_{\Psii (B_r)} \Big(|A_{\cM}|^2 | N|^2+|DN|^4\Big)\, ,
\end{align}
where $H_{\cM}$ denotes the mean curvature vector of $\cM$. Note that in order to apply the Taylor expansion in \cite{DS2} we need the manifold $\cM$ to be $C^2$, with an apriori bound on the $C^2$ norm. However, if we take $T_F \res \bB_r \setminus \bB_{r/2}$ and rescale by a factor $1/r$, the corresponding rescaled current, map and manifold fall under the assumptions of the Taylor expansion in \cite{DS2}. We can then scale back to find the corresponding inequalities for $T\res \bB_r \setminus \bB_{r/2}$ and sum over dyadic annuli to conclude \eqref{e:mass1}.

Using the conformality of $\Psii$ we conclude
\[
\int_{\Psii (B_r)}|D N|^2 = \int_{B_r} |D\NN|^2\, ,
\]
As for the other terms, we recall
\begin{align}
&\int_{\Psii (B_r)} |\langle \etaa\circ N, H_{\cM}\rangle| \leq  C \bmo^{\sfrac{1}{2}} \int_{B_r} |\etaa\circ \NN|
\stackrel{\eqref{e:media_pesata}}{\leq} C \textup{Err}_1 (\NN, B_r)\, , \\
&\int_{\Psii (B_r)} |DN|^4 \leq  C \Lip (\NN|_{B_r})^2 \int_{B_r} |D\NN|^2 \stackrel{\eqref{e:Lip_N}}{\leq} C \textup{Err}_1 (\NN, B_r)\, ,\\
&\int_{\Psii (B_r)} |A_\cM|^2 |N|^2 \leq  C \bmo \int_{B_r} |z|^{2\gamma_0-2} |\NN|^2 =
C \bmo \int_0^r \frac{\bH (s)}{s^{2-2\gamma_0}}\, ds \leq C \textup{Err}_1 (\NN, B_r)\, .
\end{align}
Combining the latter estimates with \eqref{e:competitore} and \eqref{e:prima} we achieve
\begin{equation}\label{e:competitore2}
\frac{1}{2} \int_{B_r} |D\NN|^2 \leq C \textup{Err}_1 (\NN, B_r) + \mass (\bT_G) - Q \cH^2 (\Psii (B_r (x))\, . 
\end{equation}
Next, fix an orthonormal frame $\xi_1, \xi_2$ on $B_r$ and, using the area formula from \cite{DS2}, compute
\begin{align*}
\mass (\bT_G)= & \int_{\Psii (B_r)} \sum_i |(\xi_1 + DL_i \cdot \xi_1) \wedge (\xi_2 + DL_i \cdot \xi_2)|\\
 \leq &\frac{1}{2} \int_{\Psii (B_r)} \sum_i \left(|\xi_1 + DL_i \cdot \xi_1)|^2 + |\xi_2 + DL_i \cdot \xi_2|^2\right)\\
= & Q \cH^2 (\Psii (B_r)) + \frac{1}{2} \int_{\Psii (B_r)} |DL|^2\\
&\quad + Q \int_{\Psii (B_r)} \left( \langle D  (\etaa\circ L) \cdot \xi_1, \xi_1\rangle +
\langle D (\etaa\circ L) \cdot \xi_2, \xi_2 \rangle \right)\, .
\end{align*}
By conformality the second summand in the last inequality equals $\frac{1}{2}\int_{B_r} |D\LL|^2$. We integrate by parts the third summand. Recall that $\etaa \circ L=\etaa \circ N$ on $\Psii (\partial B_r) = \partial (\Psii (B_r))$: since $\etaa\circ N$ is orthogonal to $\xi_i$ the boundary term vanishes. Moreover, since the origin is a singularity, we must in fact integrate by parts in $B_r\setminus B_{\varepsilon}$ and then let $\varepsilon \to 0$. 
A specific choice of $\xi_i$ is $\xi_i = \lambda^{-\sfrac{1}{2}} D \Psii \cdot e_i$, where $e_1, e_2$ is the parallel frame on $\gira_Q$ naturally induced by the standard flat coordinates. It then turns out that
\[
|D_{\xi_1} \xi_1 + D_{\xi_2} \xi_2|(\Psii (z,w)) \leq C \bmo^{\sfrac{1}{2}} |z|^{\gamma_0-1}\, .
 \]
In particular $|D_{\xi_1} \xi_1 + D_{\xi_2} \xi_2|$ is integrable on $B_r$ and we can therefore conclude
\begin{align}
\mass (\bT_G) - Q \cH^2 (\Psii (B_r))  \leq  & \frac{1}{2} \int_{\Psii (B_r)} |DL|^2 + Q \int_{\Psii (B_r)} \langle \etaa\circ L, D_{\xi_1} \xi_1 + D_{\xi_2} \xi_2 \rangle\nonumber\\
\leq &  \frac{1}{2} \int_{B_r} |D\LL|^2 + C \textup{Err}_2 (\LL, B_r)\, .\label{e:competitore3}
\end{align}
Combining \eqref{e:competitore2} and \eqref{e:competitore3} we conclude \eqref{e:amin2}.

\subsection{Proof of Proposition~\ref{p:amin} case (b): $T$ semicalibrated} We proceed as in the previous step and define the current $Z$ as in \eqref{e:competitore}. If $S$ is any current such that
\[
\partial S = T-Z = \bT_{\FF|_{B_r}} - \bT_\GG = \bT_{F|_{\Psii (B_r)}} - \bT_G\, ,
\]
then the semicalibrated condition gives
\[
\mass (T) \leq \mass (Z) + S (d\omega)\, ,
\]
where $\omega$ is the calibrating form. In particular, in order to conclude the proof it suffices to find an $S$ such that
\begin{equation}\label{e:errore_semical}
|S (d\omega)| \leq C\,\textup{Err}_1(\NN,B_r) + C\,\textup{Err}_2(\LL,B_r)+\frac{C}{r}\int_{B_r}|\LL|^2\, :
\end{equation}
combining the latter inequality with the estimates of the previous subsection we reach the desired inequality.

We first define $H_i: [0,1] \times \Psii (B_r) \to \I{Q}(\R^{2+n})$ for $i=1,2$ by
\begin{gather}
[0,1]\times \Psii (B_r) \ni (t,p) \mapsto H_1(t,p):= \sum_{i=1}^{Q}\a{p+t\, N_i(p)}\in \I{Q}(\R^{2+n})\notag\\
[0,1]\times \Psii (B_r) \ni (t,p) \mapsto H_2(t,p):= \sum_{i=1}^{Q}\a{p+(1-t) \, L_i(p)}\in \I{Q}(\R^{2+n})\notag\,.
\end{gather}
We choose $S:=S_1+S_2$, where $S_i:=\bT_{H_i}$ for $i=1,2$.
Thanks to the homotopy formula in \cite{DS2}, we get
\begin{align*}
\de S_1 & = \bT_{F|_{\Psii (B_r)}}-Q \a{\cM} -\bT_{H_1\vert_{[0,1]\times \Psii (\partial B_r)} },\\
\de S_2 & =  Q\a{\cM} - \bT_{G|_{\Psii (B_r)}} + \bT_{H_2\vert_{[0,1]\times \Psii (\partial B_r)}} .
\end{align*}
On the other hand since $N=L$ on $\Psii (\partial B_r)$, we conclude $\partial S = \partial (S_1+S_2) = T-Z$. 

We next estimate $|S_1 (d\omega)|$ and $|S_2 (d\omega)|$. Since the estimates are analogous, we give the details only
for the first.
We start from the formula
\[
S_1(d\omega) =\int_{\Psii (B_r)}\int_0^1  \sum_{i=1}^{Q}
\big\langle \vec\zeta_i(t,p), d\omega((H_1)_i(t,p)) \big\rangle\, d\cH^2(p)\,dt,
\]
with
\begin{align*}
\vec{\zeta}_i(t,p) & = 
\big(\xi_1+t\,\nabla_{\xi_1}N_i(p)\big)\wedge\big(\xi_2+t\,\nabla_{\xi_2}  N_i(p)\big)
\wedge  N_i(p)\\
&=:\xi_1\wedge \xi_2 \wedge N_i(p) + \vec{E}_i(t, p)\,,
\end{align*}
and
\begin{align}
|\vec E_i(t,p)| \leq C\,(|D N|(p) + |DN|^2 (p))\,|N|(p).
\end{align}
Next we note that
\begin{align}
d\omega((H_1)_i(t,p)) = d\omega(p) + I(t,p),
\end{align}
where $I(t,p)$ can be estimated by 
\begin{align}
|I(t,p)| & = |d\omega((H_1)_i(t,p)) - d\omega(p)|
\leq C\, \|D^2\omega\|_{L^\infty}\, | N|(p).
\end{align}
Therefore, we have
\begin{align}
&\Big\vert\sum_{i=1}^{Q}\big\langle \vec\zeta_i(t,p), d\omega((H_1)_i(t,p)) \big\rangle
\Big\vert
\leq \sum_{i=1}^{Q}\langle\xi_1\wedge \xi_2 \wedge N_i(p), d\omega(p) \rangle
+ \|d\omega\|_{L^\infty} \sum_{i=1}^{Q} |\vec E_i(t,p)|\notag\\
&\quad + C\sum_{i=1}^{Q}\Big((|N_i| +|\vec E_i|)\, |I|\Big)(t,p)\notag\\
& \leq C \bmo^{\sfrac{1}{2}} \,|\etaa\circ N| + C |N|^2(p) + C |D N|(p)\,| N|(p) + Cr |DN|^2 (p)\, ,\notag
\end{align}
where we have only used the bound $|N| (p) \leq C r$ on $\Psii (B_r)$. 
Arguing similarly for $S_2$ (observe that we have the bound $|L| (p) \leq C r$) 
and estimating $|N||DN| + |L| |DL| \leq r^{-1} (|N|^2 + |L|^2) + Cr (|DN|^2 + |DL|^2)$, we achieve
\begin{align*}
|S_1(d\omega)| + |S_2(d\omega)| &\leq 
C\,\bmo^{\sfrac{1}{2}} \int_{\Psii (B_r)}\big(|\etaa\circ N| +|\etaa\circ L|\big)+
C\, r^{-1}\int_{\Psii (B_r)}\big(| N|^2 +| L|^2\big)\\
&\quad
+ C r\int_{\Psii (B_r)}\big(|D N|^2+|D L|^2\big),
\end{align*}
and we conclude as above by a change of variable and Theorem \ref{t:cm}. 

\subsection{Proof of Proposition~\ref{p:amin} in case (c): $T$ is the cross-section
of a three dimensional area minimizing cone} Recall that in this case $\supp (T) \subset \partial \bB_R (p_0)$, 
where $p_0 = (0, \ldots , 0, R) = R e_{n+2}$ and $R^{-1} \leq \bmo^{\sfrac{1}{2}}$. For the computations of this subsection it is indeed convenient
to change coordinates so that $p_0$ is in fact the origin, whereas $\Psi (0,0)$ is the point $(0, \ldots , 0, -R)$. In these new coordinates we then have $\cM, \supp (T), \im (\FF) \subset \partial \bB_R (0)$.
These coordinates will however be used only in here, whereas in the next sections we will return to the usual ones.

We introduce the following notation:
$\cC (r)$ is the cone over $\Psii (B_r)$ with vertex $0$, i.e.
\[
\cC (r):=\big\{\rho p \in \R^{n+2}\,:\, \rho\in [0,1], \,p\in \Psii (B_r)\big\},
\]
with the orientation compatible with that of $0 \cone \a{\cM}$.
We extend $F$ to $\tilde{F}: \cC (r)\to \Iq (\R^{n+2})$ by setting 
$\tilde F (\rho p) := \rho\, F(p)$ for every $p\in \Psii (B_r)$.

In order to estimate the Dirichlet energy of $N$ in terms of that of $L$,
we construct a suitable function $K: \cC (r) \to \I{Q}(\R^{n+2})$
(depending on $L$ and $N$)
such that $K \vert_{\de \cC (r)} = \tilde{F} \vert_{\de \cC (r)}$:
we can then test the minimizing property of 
$0 \cone T$ comparing its mass with that of 
the current 
\[
Z:= 0\cone T - \bT_{\tilde{F}} + \bT_K = 0 \cone (T- \bT_{F|_{\Psii (B_r)}}) + \bT_K
\]
which is easily recognized to satisfy $\de Z = \de (0\cone T)$. In particular, using the minimality of $0\cone T$, we conclude
\begin{equation}\label{e:minimalita'_cono}
R^{-1} \mass (0\cone \bT_{F|_{\Psii (B_r)}}) \leq R^{-1} \mass (\bT_K) + C \textup{Err}_1 (\NN, B_r)\, .
\end{equation}

We consider the space of parameters $[0,1] \times B_r$ and
recall that the points in $\gira_{Q}$ are identified by two complex
coordinates $(z,w) \in \C \times \C$.
For the definition of $K$ we need to introduce the following sets
\begin{align}
A_1 := & \left\{(\rho, z, w) \in [0,1] \times B_r : 1-r\leq \rho \leq 1, 
\; |z| \leq \frac{\rho + 2\,r -1}{2}\right\},\\
A_2 := &\left\{(\rho, z, w) \in [0,1] \times B_r : 1-2\,r\leq \rho \leq 1 -r, 
\; |z| \leq \frac{1-\rho}{2}\right\},\\
B:=  & [1-2\,r, 1] \times B_r \setminus \big( A_1 \cup A_2 \big),
\end{align}
We then define the function $\HH:[0,1] \times B_r \to \I{Q}(\R^{n+2})$ given by
\begin{equation}\label{e:competitor}
\HH(\rho,z,w) :=
\begin{cases}
\rho\, \LL(z,w) & \text{if } \rho \leq 1 - 2\, r,\\
\rho\,l_1(\rho)\, \NN \left(\frac{2\,r\,z}{\rho+2r -1},\frac{(2\,r)^{\sfrac1{Q}}}{(\rho + 
2\,r -1)^{\sfrac1{Q}}}w\right)
& \text{if } (\rho,z,w) \in A_1,\\
- \rho\,l_1(\rho)\,  \LL \left(\frac{2\,r\,z}{1-\rho},\frac{(2\,r)^{\sfrac1{Q}}}{(1-\rho)^{\sfrac1{Q}}}w\right)
& \text{if } (\rho,z,w) \in A_2,\\
\rho\,l_2(|z|)\,  \NN \left(\frac{r\,z}{|z|},\frac{r^{\sfrac1{Q}}}{|z|^{\sfrac1{Q}}}w\right)
& \text{if } (\rho,z,w) \in B,
\end{cases}
\end{equation}
where $l_1, l_2 : \R \to \R$ are the affine functions
\begin{equation}
l_1(t) := \frac{t+r-1}{r}\quad \text{and}\quad 
l_2(t) := \frac{2\,t-r}{r}.
\end{equation}
The following are simple properties of $\HH$ which can be
easily verified:
\begin{itemize}
\item[(1)] $\HH(1,z,w) = \NN (z,w)$ for every $(z,w) \in B_r$, as 
$(1, z, w) \in A_1$ and $l_1(1)=1$;

\item[(2)] $\HH(\rho, z, w) = \rho\,\NN(z,w)$ for every $\rho \in [0,1]$ and
for every $z$ with $|z| = r$, as
$\LL \vert_{\de B_r} = \NN \vert_{\de B_r}$
and $l_2(r) =1$;

\item[(3)] $\HH$ is well-defined and continuous, as
$\HH \equiv 0$ in $A_1 \cap A_2$ from $l_1(1-r) =0$,
\[
\HH(\rho, z,w) =
\textstyle{
\rho \,\frac{\rho + r -1}{r}\,
\NN \left(\frac{r\,z}{|z|},\frac{r^{\sfrac1{Q}}}{|z|^{\sfrac1{Q}}}z\right)
\quad \text{in } A_1\cap \de B,
}
\]
and 
\[
\HH(\rho, z,w) =
\textstyle{
\rho \,\frac{\rho + r -1}{r}\,
\NN \left(\frac{r\,z}{|z|},\frac{r^{\sfrac1{Q}}}{|z|^{\sfrac1{Q}}}w\right)
\quad \text{in } A_2\cap \de B.
}
\]
\end{itemize}
The competitor map $K: \cC (r) \to \I{Q}(\R^{n+2})$ is now given by
\[
K(\rho\,p) := \sum_{i=1}^Q \a{\rho\,p +  H_i(\rho\,p)} \quad
\text{with }  H(\rho\, p) := \HH(\rho, \Psii^{-1}(p)).
\]
Note that by (1) and (2) above it follows that 
$K\vert_{\de \cC (r)} = \tilde F\vert_{\de \cC (r)}$.

We start now estimating the masses of the various currents introduced above.
Since $\supp(\bT_F) \subset \partial \bB_R (0)$,
it follows that $\mass(0\cone \bT_F) = R \mass(\bT_F) /3$ and,
by the expansion of the mass of $\bT_F$, we have that
\begin{align}\label{e:mass F}
\mass(\bT_{F|_{\Psii (B_r)}})
& \geq Q\, \cH^2(\Psii (B_r)) + \frac{1}{2} \int_{B_r} |D\NN|^2
- C \textup{Err}_1 (\NN, B_r)\, .
\end{align}
Combining the latter estimate with \eqref{e:minimalita'_cono} we conclude
\begin{equation}\label{e:minimalita'_cono_2}
\int_{B_r} |D\NN|^2 \leq 6R^{-1} \mass (\bT_K) - 2Q \cH^2 (\Psii (B_r)) + C\,\textup{Err}_1 (\NN, B_r)\, .
\end{equation}

\medskip

For what concerns the mass of $\bT_K$, recalling that $p+\supp(L(p))\in \partial \bB_R (0)$ for every $p \in \Psii (B_r)$, we deduce that
\[
\mass(\bT_K\res \bB_{R (1-2r})) = \mass(0 \cone \bT_G \res \bB_{R (1-2r)}) =
R \frac{(1-2r)^3\mass(\bT_G)}{3}
\]
and
\[
\mass(\bT_G) \leq Q\, \cH^2(\Psii (B_r)) + \frac{1}{2} \int_{B_r} |D\LL|^2 + \textup{Err}_2 (\LL, B_r)\, .
\]
In particular we conclude
\begin{equation}\label{e:stima_cono_1}
6 R^{-1} \mass (\bT_K\res \bB_{R(1-2r)}) \leq 2Q (1-2r)^3 \cH^2 (\Psii (B_r)) + \int_{B_r} |D\LL|^2 + \textup{Err}_2 (\LL, B_r)\, .
\end{equation}
Next we pass to estimating $\mass(\bT_K \res \bB_R\setminus \bB_{R(1-2r)})$. In order to carry on our estimates we use the area formula for multifunctions, cf. \cite{DS2}. In particular we fix an orthonormal frame $\xi_1, \xi_2$ for $\cM$ as in the proof of case (a) and we let $\xi_3 = R^{-1} \partial_t$ be normal to them in $T \cC (r)$, i.e. pointing in the radial direction of the cone. We then have
\[
\mass (\bT_K \res (\bB_R\setminus \bB_{R(1-2r)}) = \int_{\cC (r)} \sum_i \underbrace{|(\xi_1 + DH_i \cdot \xi_1)\wedge (\xi_2 + DH_i \cdot \xi_2)\wedge (\xi_3 + DH_i\cdot \xi_3)|}_{(A)}\, .
\]
Using the Taylor expansion for $(A)$, cf. \cite{DS2}, we can bound
\begin{align}\label{e:stima K}
& R^{-1} \mass(\bT_K \res (\bB_R\setminus \bB_{R(1-2r)}))
\leq Q R^{-1}\, \cH^3\big(\cC (r) \cap \bB_1 \setminus \bB_{1-2r}\big)\nonumber\\
&\qquad+ Q\, R^{-1} \int_{1-2r}^1 \int_{\Psii (B_r)} \frac{d}{dt} [(\etaa\circ H)(t p)] t^2 dt \notag\\
&\qquad + Q\, R^{-1} \int_{1-2r}^1
\int_{\Psii (B_r)} \sum_{i=1}^2 \langle \nabla_{\xi_i} (\etaa\circ H), \xi_i\rangle \,
t^2 dt + CR^{-1} \int_{1-2r}^1 \int_{\Psii (B_r)} |D H|^2\,t^2 dt\, .
\end{align}
The linear terms can be integrated by parts: since
$\nabla_p (\etaa \circ H)(tp) = \frac{d}{dt}(\etaa \circ H)(tp)$, we have
\begin{align}\label{e:linear 1}
 \int_{1-2r}^1 \int_{\Psii (B_r)} \frac{d}{dt} [(\etaa\circ H)(t p)] t^2 dt
& =  \int_{\Psii (B_r)} \left\langle (\etaa\circ H)(p) -
(1-2r)^2(\etaa\circ H)\big((1-2r)p\big), p \right\rangle\notag\\
&\quad - 2\int_{1-2r}^1\int_{\Psii (B_r)} \langle (\etaa\circ H)(t p), p\rangle\,t dt
\end{align}
\begin{align}\label{e:linear 2}
\int_{1-2r}^1 \int_{\Psii (B_r)} \sum_{i=1}^2 \langle \nabla_{\xi_i} (\etaa\circ  H), \xi_i\rangle \,t^2 dt
& = -
\int_{1-2r}^1 \int_{\Psii (B_r)} \langle (\etaa\circ H), H_{\cM}\rangle\, t^2 dt.
\end{align}
Therefore, by a simple change of coordinates we can estimate
\begin{align}\label{e:stima K bis}
&R^{-1} \mass(\bT_K \res (\bB_R\setminus \bB_{R(1-r)}))
\leq \frac{Q\, \big(1 - (1-2r)^{3}\big)}{3}\, \cH^2\big(\Psii (B_r)\big)\\
&\qquad + C \bmo^{\sfrac{1}{2}} \int_{B_r} \big(|\etaa\circ \NN| + |\etaa\circ \LL|\big) + C \bmo^{\sfrac{1}{2}} \int_{1-2r}^1 \int_{B_r} |D \HH|^2(t,z,w) \,dz\, dt\\
&\qquad + C\bmo^{\sfrac{1}{2}}\int_{1-2r}^1 \int_{B_r} |z|^{\gamma_0 -1} |\etaa \circ \HH|(t, z, w)dz\, dt\,  .
\end{align}
In order to bound the various integrands of \eqref{e:stima K bis},
we start with the following general remark.
Assume that $\chi: [1-2r,1] \times B_r \to [0,+\infty)$ has the structure
\begin{equation}\label{e:chi}
\chi(\rho,x,y) =
\begin{cases}
\chi_1\left(\frac{2\,r\,z}{\rho+2r -1},\frac{(2\,r)^{\sfrac1{Q}}}{(\rho + 
2\,r -1)^{\sfrac1{Q}}}w\right)
& \text{if } (\rho,z,w) \in A_1,\\
\chi_2\left(\frac{2\,r\,z}{1-\rho},\frac{(2\,r)^{\sfrac1{Q}}}{(1-\rho)^{\sfrac1{Q}}}w\right)
& \text{if } (\rho,z,w) \in A_2,\\
\chi_3\left(\frac{r\,z}{|z|},\frac{r^{\sfrac1{Q}}}{|z|^{\sfrac1{Q}}}w\right)
& \text{if } (\rho,z,w) \in B,
\end{cases}
\end{equation}
for some $\chi_1, \chi_2, \chi_3:B_r \to [0,+\infty)$.
Then one can compute the integral of $\chi$ in the following way:
\[
\int_{1-2r}^1 \int_{B_{r}} \chi(t,z,w)\,dz\,dt =
\int_{A_1}\chi(t,z,w)\,dz\,dt + \int_{A_2}\chi(t,z,w)\,dz\,dt +
\int_{B}\chi(t,z,w)\,dz\,dt,
\]
and one can easily compute that
\begin{align}
&\int_{A_1} \chi (t,z,w)\, dz\, dt
 = \int_{1-r}^1 \int_{B_{\frac{t+2r-1}{2}}} \chi_1(t,z,w)\, dz\, dt\notag\\
=& \int_{1-r}^1 \int_{B_{\frac{t+2r-1}{2}}}
\chi_1\left(\frac{2\,r\,z}{t+2r -1},\frac{(2\,r)^{\sfrac1{Q}}}{(t + 2\,r -1)^{\sfrac1{Q}}}w\right)dz\, dt\notag\\
=& \int_{1-r}^1\left(\frac{t+2r-1}{2r}\right)^2 \int_{B_{r}}
\chi_1(z,w) dz\, dt \leq r \int_{B_{r}} \chi_1(z,w) dz\, dt\, .\label{e:int A1}
\end{align}
Similarly 
\begin{align}\label{e:int A2}
\int_{A_2} \chi (t,z,w) dz\, dt \leq r \int_{B_{r}} \chi_2(z,w)\, dt,
\end{align}
and 
\begin{align}\label{e:int B}
\int_{B} \chi(t,z,w) dz\, dt= &  \int_{1-r}^1 dt \int^{r}_{\frac{t+2r-1}{2}} \frac{s}{r}\, ds
\int_{\de B_r} \chi_3(z,w)\,dz\nonumber\\
&\quad + \int_{1-2r}^{1-r} \int_{\frac{1-t}{2}}r \frac{s}{r}\, ds\int_{\de B_r} \chi_3(z,w)\,dz
 \leq  r^2\int_{\de B_{r}} \chi_3(z,w)\,dz\, .
\end{align}
By direct computations one verifies that the integrands in \eqref{e:stima K bis}
are all bounded from above by functions $\chi$ with the structure \eqref{e:chi}:
in particular,
\begin{itemize}
\item[(i)] $|z|^{\gamma_0 -1} |\etaa \circ  \HH|(t,z,w) \leq \chi(t,z,w)$
if we choose
\[
\chi_1(z,w) = \chi_3(z,w) = |z|^{\gamma_0 -1} |\etaa \circ \NN|(z,w)
\quad \text{and}\quad
\chi_2(z,w) = |z|^{\gamma_0 -1} |\etaa \circ  \LL|(x,y);
\]

\item[(ii)] $|D\HH|^2(t,z,w)\leq \chi(t,z,w)$
if we choose
\begin{gather*}
\chi_1(z,w) = \chi_3(z,w) = \frac{C}{r^2} |\NN|^2(z,w) + C\, |D\NN|^2(z,w)\\
\chi_2(z,w) = \frac{C}{r^2} |\LL|^2(z,w) + C\,|DL|^2(z,w).
\end{gather*}
for some dimensional constant $C>0$.
\end{itemize}
It then turns out from \eqref{e:int A1}, \eqref{e:int A2}, \eqref{e:int B}
and (i), (ii), (iii) that 
\begin{align}\label{e:stima K tris}
6 R^{-1} \mass(\bT_K \res (\bB_R\setminus \bB_{R(1-r)}))&
\leq Q\, \big(1 - (1-2r)^{3}\big)\, \cH^2\big(\Psii (B_r)\big)\notag\\
&\qquad + C\, \textup{Err}_1(\NN, B_r) + C\, \textup{Err}_2 (\LL, B_r)\, .
\end{align}
Summing \eqref{e:stima K tris} and \eqref{e:stima_cono_1} we conclude
\[
6 R^{-1} \mass (\bT_K) \leq 2 Q \cH^2 (\Psii (B_r)) + \int_{B_r} |D\LL|^2 + C\, \textup{Err}_1 (\NN, B_r) + C\, \textup{Err}_2 (\LL, B_r)\, .
\]
Combining the latter estimate with \eqref{e:minimalita'_cono_2} we conclude the proof.

\section{Harmonic competitor}\label{s:harmonic}

The most natural choice for the competitor $\LL$ is a suitable ``harmonic'' extension of the boundary value $\NN|_{\partial B_r}$. Following the ideas of \cite{Chang} we estimate carefully the energy of such competitor. To this purpose it is useful to introduce ``polar'' coordinates with center $0$ in $\gira$ and split accordingly the Dirichlet integrand in radial and angular parts. More precisely, consider $(z_0,w_0) = ((\xi_0, \zeta_0), w_0) \in \partial B_r$ and take, locally, the standard flat coordinates $z = (x_1, x_2)$ of Definition \ref{d:Riemann_surface}. We then denote by $\nu$ the exterior unit vector normal to $\partial B_r$ at $(z_0, w_0)$ and by $\tau$ the corresponding tangent unit vector obtained by rotating $\nu$ of an angle $\pi/2$ in the counterclockwise direction, namely
\[
\nu := |z_0|^{-1} \left( \xi_0 \frac{\partial}{\partial x_1} + \zeta_0 \frac{\partial}{\partial x_2}\right)
\qquad \mbox{and}\qquad \tau := |z_0|^{-1} \left( - \zeta_0 \frac{\partial}{\partial x_1} + \xi_0 \frac{\partial}{\partial x_2}\right)\, .
\]
 The directional derivatives of any (multi)function $f$ on $\gira$ gives then two (multi)functions
\[
D_\nu f = \sum_i \a{Df_i\cdot \nu}\qquad \mbox{and} \qquad D_\tau f = \sum_i \a{Df_i \cdot \tau}\, .
\]
The Dirichlet integrand $|Df|^2$ enjoys then the splitting
\[
|Df|^2 = |D_\nu f|^2 + |D_\tau f|^2\, .
\]
For the rigorous justification of these identities see \cite{DS1}.

\begin{proposition}\label{p:harmonic}
There are constants $C>0$, $\sigma >0$ such that,  for every $r\in (0,1)$ there exists a competitor
$\LL \colon B_r\to \I{Q} (\R^{2+{n}})$ for $\NN$ with the following additional properties:
\begin{itemize}
\item[(i)] $\Lip (\LL) \leq C_{\ref{p:amin}}^{-1}
$, $\|\LL\|_0 \leq C r$.
\item[(ii)] The following estimates hold:
\begin{gather}
\int_{B_r}|D\bar \LL|^2\leq C\, r \int_{\de B_r} |D\bar \NN|^2 \leq C r \bD' (r) \, , \label{e:D<rD'}\\
\int_{B_r}|z|^{\gamma_0-1}|\etaa\circ \LL| \leq C\,r^{\gamma_0}  \int_{\de B_r} |\etaa\circ \NN| + C\, \bH (r) \, . \label{e:comp_mean_H}
\end{gather}
\item[(iii)] For every $a>0$ there exists $b_0>0$ such that, for all $b \in (0, b_0)$, the following estimate holds:
\begin{align}\label{e:harmonic energy}
(2\,a+b)\int_{B_r} |D\bar \LL|^2
& \leq r \int_{\de B_r} |D_\tau \NN|^2
+  \frac{a\,(a+b)}{r}\int_{\de B_r} |\NN|^2 + C r^{1+\sigma} \bD' (r)\, .
\end{align}
\end{itemize}
\end{proposition}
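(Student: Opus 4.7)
My plan is to build $\LL$ as a suitable multi-valued harmonic extension of the boundary trace $\NN|_{\partial B_r}$, bent to take values in $\Sigma$. Following Remark~\ref{r:W12}, via the conformal chart $\bW(\zeta):=(\zeta^{\bar Q},\zeta)$ the disc $B_r\subset\gira_{\bar Q}$ corresponds to the Euclidean disc of radius $r^{1/\bar Q}$, with Dirichlet integrals preserved. I set $\bar\LL\colon B_r\to\Iq(\R^{2+\bar n})$ to be the Dir-minimizing multi-valued extension of $\p_0\circ\NN|_{\partial B_r}$: concretely, decompose the boundary trace into its Almgren-irreducible pieces (cf.~\cite{DS1}), lift each piece to a single-valued harmonic function on a further branched cover of ramification $q\leq Q$, and resum. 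Then $\LL$ is recovered from $\bar\LL$ via~\eqref{e:relazione}, which automatically secures condition~(b) of Definition~\ref{d:competitors}.

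\textbf{Properties (i) and (ii).} The sup bound $\|\LL\|_0\leq Cr$ follows from the maximum principle applied to $|\bar\LL|^2$ (subharmonic for Dir-minimizers), combined with the boundary bound $\|\NN|_{\partial B_r}\|_0\leq C\bmo^{1/4}r^{1+\gamma_0/2}\leq Cr$ from~\eqref{e:Ndecay} and with $\Psi_0(0)=0$, $D\Psi_0(0)=0$ for the $\Psi_0$-correction in~\eqref{e:relazione}. The Lipschitz bound is obtained by interior gradient estimates for harmonic competitors on a slightly shrunken subdisc, joined to the boundary datum via a thin matching collar whose extra Dirichlet energy is absorbed into the final $Cr^{1+\sigma}\bD'(r)$. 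For~\eqref{e:D<rD'}, pulling back through $\bW$ and through each irreducible lift reduces the inequality to the classical trace bound for single-valued harmonic functions. For~\eqref{e:comp_mean_H}, $\etaa\circ\bar\LL$ is the genuine single-valued harmonic extension of $\etaa\circ\p_0\circ\NN|_{\partial B_r}$, so a Poisson-kernel computation gives $\int_{B_r}|z|^{\gamma_0-1}|\etaa\circ\bar\LL|\leq Cr^{\gamma_0}\int_{\partial B_r}|\etaa\circ\p_0\circ\NN|$; the $\Psi_0$-correction contributes a term quadratic in $\bar\LL$ absorbed into $C\bH(r)$.

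\textbf{The sharp estimate (iii).} Write each single-valued lift as a Fourier series $h=\sum_k a_k \rho^{|k|}e^{ik\theta}$ on the standard disc of radius $r^{1/M}$, where $M:=\bar Q q\leq\bar Q Q$ is the full ramification order. Setting $\lambda_k:=|k|/M$, a direct computation on the conformal pull-back yields
\[
\int_{B_r}|D\bar\LL|^2 = c\sum_k\lambda_k\,|a_k|^2 r^{2\lambda_k},\quad r\int_{\partial B_r}|D_\tau\bar\NN|^2 = c\sum_k\lambda_k^2\,|a_k|^2 r^{2\lambda_k},\quad \tfrac{1}{r}\int_{\partial B_r}|\bar\NN|^2 = c\sum_k|a_k|^2 r^{2\lambda_k},
\]
with one common constant $c>0$. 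Mode by mode,~\eqref{e:harmonic energy} thus reduces to
\[
(2a+b)\lambda_k \leq \lambda_k^2 + a(a+b) \;\Longleftrightarrow\; (\lambda_k-a)(\lambda_k-(a+b))\geq 0,
\]
which holds precisely when $\lambda_k\notin(a,a+b)$. Since $\{\lambda_k\}$ is a discrete lattice of spacing $1/M\geq 1/(\bar Q Q)$, for any fixed $a>0$ I may pick $b_0=b_0(a,\bar Q,Q)>0$ so that $(a,a+b)$ contains no lattice point for every $b\in(0,b_0)$, and~\eqref{e:harmonic energy} follows (up to the error $Cr^{1+\sigma}\bD'(r)$ analyzed in the next paragraph).

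\textbf{Main obstacle.} The delicate part is the book-keeping of the error terms: the $\Psi_0$-correction in~\eqref{e:relazione} produces terms quadratic in $\bar\LL$ that must be absorbed using $\|\bar\LL\|_0\leq Cr$ and the $C^{3,\eps_0}$ bound on $\Psi_0$; the Lipschitz interpolation near $\partial B_r$ costs boundary-layer Dirichlet energy; and replacing $\bar\NN$ by $\NN$ in the tangential integral on the right-hand side of~\eqref{e:harmonic energy} costs a further $\Psi_0$-term. Each of these is bounded by a small positive power of $r$ times $\bD'(r)=\int_{\partial B_r}|D\NN|^2$, and collectively they produce the announced $Cr^{1+\sigma}\bD'(r)$. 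Once the pull-backs and irreducible liftings are in place, the underlying Fourier computation itself is routine.
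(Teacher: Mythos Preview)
Your overall strategy---irreducible decomposition of the boundary trace, harmonic extension of each lifted single-valued piece, and the Fourier/eigenvalue computation for (iii)---is exactly the one the paper uses, and your treatment of the sharp inequality $(2a+b)\lambda_k\le\lambda_k^2+a(a+b)$ is correct.

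There is, however, a genuine gap in your handling of the Lipschitz bound. You recognize one obstruction---that the harmonic extension is not Lipschitz up to $\partial B_r$ because the Dirichlet-to-Neumann map does not preserve $L^\infty$---and you correctly cure it with a thin matching collar. But there is a \emph{second} obstruction you do not address: after pushing the single-valued harmonic lifts back down through the branched cover to a $Q$-valued map on $\gira_{\bar Q}$, the derivative is singular at the \emph{origin}. Concretely, if an irreducible piece has ramification $Q_j$, its lowest nontrivial Fourier mode contributes a term behaving like $|z|^{1/(Q_j\bar Q)}$ in the flat $\gira_{\bar Q}$ coordinate, whose gradient blows up like $|z|^{1/(Q_j\bar Q)-1}$ as $|z|\to 0$ whenever $Q_j\bar Q>1$. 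Interior gradient estimates on the standard disc do not help here, because the singularity is created by the winding map $\bW^{-1}$, not by the harmonic function itself.

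The paper deals with this by a further modification near the origin: one first observes the higher-integrability bound
\[
\int_{B_{3/4}}|D\LL^0|^p \;\le\; C\Big(\int_{B_1}|D\LL^0|^2\Big)^{p/2}
\qquad\text{for some }p>2,
\]
which follows from the explicit pointwise gradient growth just described. One then performs a Lipschitz truncation via the maximal function of $|D\LL^0|$: on the good set $\{M|D\LL^0|\le c_0\}$ the map is already Lipschitz with constant $Cc_0$, and on the small bad set (contained in a neighborhood of $0$) one replaces it by a Lipschitz extension. The $L^p$ bound guarantees that the bad set has measure $\le Cc_0^{-p}(\int|D\LL^0|^2)^{p/2}$, so the extra Dirichlet energy introduced is controlled by a positive power of $\int|D\LL^0|^2$ and can be absorbed into the error term $Cr^{1+\sigma}\bD'(r)$. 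One must also check that this modification does not spoil the $|\etaa\circ\LL|$ estimate, which is arranged by extending $\etaa\circ\LL^0$ and $\LL^0-\etaa\circ\LL^0$ separately. Without this second truncation step your competitor is not Lipschitz and the application of Proposition~\ref{p:amin} is not justified.
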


Using this competitor in Proposition \ref{p:amin}, we then infer the following corollary.

\begin{corollary}\label{c:amin_ineq} For every $r\in (0,1)$ the following inequality holds 
\begin{align}\label{e:AM1 bis}
\bD(r) & \leq C\,r\,\bD'(r) +  C\, \bH(r)+ C\,\bF(r)
+C\,\bmo^{\sfrac12}\, r^{\gamma_0} \int_{\de B_r}|\etaa\circ \NN|\,.
\end{align}
For every $a>0$ there exists $b_0>0$ such that, for all $b \in (0, b_0)$ and all $r\in ]0,1[$
\begin{align}\label{e:AM2 bis}
\bD(r) &\leq 
(1+Cr) \left[\frac{r}{\,(2\,a+b)}\,
\int_{\de B_r} |D_\tau \NN|^2
+ \frac{a\,(a+b)}{r\,(2\,a+b)}\bH(r)\right]+C\, \mathcal{E}_{QM}(r)+ C r^{1+\sigma} \bD' (r)\,,
\end{align}
with
\[
\mathcal{E}_{QM}(r)\leq \bLambda (r)^{\eta_0} \bD(r) + \bF(r) + \bH(r) 
+ \bmo^{\sfrac12}\, r^{\gamma_0} \int_{\de B_r}|\etaa\circ \NN|\, .
\]
\end{corollary}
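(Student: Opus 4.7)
\smallskip

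\noindent\textbf{Proof proposal.} The plan is to simply feed the competitor $\LL$ constructed in Proposition \ref{p:harmonic} into the almost-minimizing inequality \eqref{e:amin} of Proposition \ref{p:amin} and then bound each resulting term via the estimates (i)--(iii) of Proposition \ref{p:harmonic}. Part (i) of Proposition \ref{p:harmonic} ensures that $\LL$ is an admissible test competitor (i.e.\ $\|\LL\|_\infty\leq Cr$ and $\Lip(\LL)\leq C_{\ref{p:amin}}^{-1}$), so \eqref{e:amin} applies, giving
\[
\bD(r)\leq (1+Cr)\int_{B_r}|D\bar\LL|^2 + C\,\textup{Err}_1(\NN,B_r)+C\,\textup{Err}_2(\LL,B_r)+Cr^2\bD'(r)\, .
\]

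For \eqref{e:AM1 bis} I would plug the two estimates of Proposition~\ref{p:harmonic}(ii) into this inequality: \eqref{e:D<rD'} controls $\int_{B_r}|D\bar\LL|^2$ by $Cr\bD'(r)$, while \eqref{e:comp_mean_H}, combined with the definition of $\textup{Err}_2$ in \eqref{e:error2}, gives $\textup{Err}_2(\LL,B_r)\leq C\bmo^{1/2}(r^{\gamma_0}\int_{\de B_r}|\etaa\circ\NN|+\bH(r))$. Together with the definition of $\textup{Err}_1$ in \eqref{e:error1} this yields
\[
\bD(r)\leq C\bLambda^{\eta_0}(r)\bD(r)+Cr\bD'(r)+C\bH(r)+C\bF(r)+C\bmo^{1/2}r^{\gamma_0}\int_{\de B_r}|\etaa\circ\NN|\, ,
\]
where I have absorbed the factor $r^{1+\gamma_0}$ of \eqref{e:error1} into the weaker $r^{\gamma_0}$. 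By the decay estimates of Theorem \ref{t:cm} we have $\bLambda^{\eta_0}(r)\leq C\bmo^{\eta_0}$, which by Lemma \ref{l:piccolezza} can be made arbitrarily small after rescaling; therefore the term $C\bLambda^{\eta_0}(r)\bD(r)$ on the right can be absorbed into the left, giving \eqref{e:AM1 bis}.

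For \eqref{e:AM2 bis} the strategy is identical, except that I would use \eqref{e:harmonic energy} of Proposition~\ref{p:harmonic}(iii) rather than \eqref{e:D<rD'} to bound $\int_{B_r}|D\bar\LL|^2$. Dividing \eqref{e:harmonic energy} by $(2a+b)$ and plugging into \eqref{e:amin} produces
\[
\bD(r)\leq (1+Cr)\left[\frac{r}{2a+b}\int_{\de B_r}|D_\tau\NN|^2+\frac{a(a+b)}{r(2a+b)}\bH(r)\right]+\frac{Cr^{1+\sigma}}{2a+b}\bD'(r)+C\,\textup{Err}_1+C\,\textup{Err}_2+Cr^2\bD'(r)\, ,
\]
and the two error terms $\textup{Err}_1(\NN,B_r)$ and $\textup{Err}_2(\LL,B_r)$, estimated exactly as in the first step (again using \eqref{e:comp_mean_H} for $\textup{Err}_2$), sum up to a quantity bounded by $C\mathcal{E}_{QM}(r)$. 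Swallowing the $Cr^2\bD'(r)$ into the larger $Cr^{1+\sigma}\bD'(r)$ term (for $\sigma<1$) yields \eqref{e:AM2 bis}.

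The only non-routine aspect is the absorption argument that kills $\bLambda^{\eta_0}(r)\bD(r)$ in \eqref{e:AM1 bis}: one must verify that the smallness parameter $\bmo$ in Lemma \ref{l:piccolezza} can be chosen (once and for all) small enough that $C\bLambda^{\eta_0}(r)\leq \tfrac12$ uniformly in $r\in(0,1)$. Everything else is bookkeeping, so I would expect the proof to be short once the two propositions are in hand.
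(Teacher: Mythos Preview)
Your proposal is correct and follows the same approach as the paper: plug the competitor of Proposition~\ref{p:harmonic} into the almost-minimizing inequality of Proposition~\ref{p:amin} and read off the two estimates. The paper's own proof is a single sentence doing exactly this; the only cosmetic difference is that the paper phrases the absorption step via the bound $\bLambda(r)\leq C r^2$ (which follows from $\bH(r)\leq C r^{3+\gamma_0}$ and the Lipschitz estimate on $\NN$) rather than via $\bmo$-smallness as you do---but since the implicit constant in $\bLambda(r)\leq C r^2$ already carries a positive power of $\bmo$ from \eqref{e:Ndecay} and \eqref{e:Lip_N}, the two justifications amount to the same thing.
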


\begin{proof}[Proof of Corollary \ref{c:amin_ineq}]
Recalling that $\bH (r) \leq C r \|\NN\|^2_{\partial B_r} \leq C r^{3+\gamma_0}$ we easily infer that $\bLambda (r) \leq C r^2$ and thus the inequalities follow readily from Proposition \ref{p:amin} and Proposition \ref{p:harmonic}.
\end{proof}

\subsection{Proof of Proposition \ref{p:harmonic}: Step 1} First of all we observe that it suffices to exhibit $\bar \LL$, as $\LL$ can be recovered from it via the formula \eqref{e:relazione}. Moreover, it suffices to show the estimates with
$\bar \NN$ in place of $\NN$ in the right hand side, because we obviously have $|\bar \NN|\leq |\NN|$ and $|D\bar \NN|\leq |D\NN|$. Next we wish to relate $\etaa\circ \LL$ and $\etaa\circ \bar \LL$ for two maps satisfying the relation \eqref{e:relazione}. Note that by a simple Taylor expansion we have 
\[
|\etaa \circ \LL|\leq C |\etaa\circ \bar \LL| + C \cG (\bar \LL, \etaa\circ \bar \LL)^2\, ,
\]
where the constant $C$ depends on the $C^2$ norm of $\Psi_0$. In particular we record the following conclusion:
\begin{equation}\label{e:dettagliuccio}
\int_{B_r} |z|^{\gamma_0-1} |\etaa\circ \LL| \leq C \int_{B_r} |z|^{\gamma_0-1} |\etaa\circ \bar \LL| + C \int_{B_r} |z|^{\gamma_0-1} |\bar \LL|^2\, .
\end{equation}

In this step we exhibit an ``harmonic''\footnote{We remark that the competitor used here does not coincide, in general, with the Dirichlet minimizer with boundary value $\bar \NN|_{\partial B_r}$.} competitor $\HH$ which satisfies all the requirements of the proposition except for the Lipschitz estimate. In fact we will show that there is a $W^{1,2}$ map $\HH: B_r \to \Iq (\R^{2+\bar n})$ such that
\begin{align}
&\HH|_{\partial B_r} = \bar \NN|_{\partial B_r} \qquad \mbox{and}\qquad \|\HH\|_{L^\infty (B_r)} \leq Q \|\bar \NN\|_{L^\infty (\partial B_r)}\label{e:armonica1}\\
&\int_{B_r} |D\HH|^2 \leq C r \int_{\partial B_r} |D\bar \NN|^2 \label{e:armonica2}\\
&\int_{B_r} |z|^{\gamma_0-1} |\etaa\circ \HH| \leq C r^{\gamma_0} \int_{\partial B_r} |\etaa\circ \bar \NN|\label{e:armonica3}\\
&\int_{B_r} |z|^{\gamma_0-1} |\HH|^2 \leq C r^{\gamma_0} \int_{\partial B_r} |\bar \NN|^2\label{e:armonica5}\\
&(2\,a+b)\int_{B_r} |D\bar \HH|^2 \leq r \int_{\de B_r} |D_\tau \bar \NN|^2
+  \frac{a\,(a+b)}{r}\int_{\de B_r} |\bar \NN|^2\, .\label{e:armonica4}
\end{align}
In these estimates we do not use any of the particular properties of $\bar\NN$ and indeed for any Lipschitz multivalued map $\bar\NN: B_r \to \Iq (\R^{2+\bar{n}})$ there is such an ``harmonic'' competitor. Therefore, given the scaling invariance  of the estimates, we will assume without loss of generality that $r=1$.  

Let $D_r :=\{|z|<r\}$ denote the disk of radius $r$ in $\R^2$, which we identify with the complex plane.
We start by defining the ``winding map'' $\bW: \bar D_1 \to \gira$ given (in complex notation) by 
\[
\bW (z) := (z^{\bar Q}, z)\, .
\]
We then consider the multivalued map $\UU :=\bar \NN \circ \bW$. Let $\theta \mapsto u (\theta)$ be its trace on $\partial D_1 (0)$, which we parametrize with the angle $\theta\in [0,2\pi]$. According to \cite[Proposition 1.5]{DS1} we can decompose $u$ in a superposition of simple functions $u (\theta)=\sum_{j=1}^J u_j(\theta)$ such that, for every $j=1,\dots,J$,
\[
u_j(\theta)=\sum_{i=1}^{Q_j}\a{\gamma_j\left(\frac{\theta+2\pi i}{Q_j}\right)}\, ,
\]
where the $\gamma_j:[0, 2\pi]\to \R^{2+\bar n}$ are periodic Lipschitz functions.
Next consider the Fourier's expansion of each $\gamma_j$
\[
 \gamma_j(\theta)=\frac{a_{j,0}}{2}+\sum_{l=1}^\infty \left(a_{j,l}\cos(l\theta)+b_{j,l}\sin (l\theta)\right)\,,
\]
and its harmonic extension, which in polar coordinates $(\rho, \theta)$ reads as 
\begin{equation}\label{e:zeta_j}
 \zeta_j(\rho,\theta):=\frac{a_{j,0}}{2}+\sum_{l=1}^\infty \rho^l \big(a_{j,l}\cos(l\theta )+b_{j,l}\sin (l\theta)\big)\, .
\end{equation}
We then can define the ``harmonic'' competitor for $\UU$, which is the $Q$-valued map
\[
\VV (\rho, \theta) := \sum_{j=1}^J\sum_{i=1}^{Q_j}\a{\zeta_j\left(\rho^{\sfrac{1}{Q_j}}, \frac{\theta+2\pi i}{Q_j}\right)}
\]
and the ``harmonic'' competitor for $\bar\NN$, which is  $\HH = \VV\circ \bW^{-1}$.
Observe that the first claim in \eqref{e:armonica1} is obvious, whereas the second claim follows from the maximum principle 
for classical harmonic functions.

Simple computations and the conformality of $\bW$, see for instance \cite[Proof of Proposition 5.2]{DS1}, yield
\begin{align}
\int_{B_1} |D \HH|^2 = \int_{D_1} |D\VV|^2 = & \pi \sum_{j=1}^{J} \sum_{l=1}^\infty l \big(|a_{j,l}|^2+|b_{j,l}|^2\big)\,, \label{e:D}\\
\int_{\de B_1} |D_\tau \HH|^2= &\frac{\pi}{\bar Q}\sum_{j=1}^J\sum_{l=1}^\infty \frac{l^2}{Q_j}\big(|a_{j,l}|^2+|b_{j,l}|^2\big)\,,\label{e:D'}
\end{align}
\begin{align}
\int_{\de B_1} |\HH|^2=& \pi \bar Q \sum_{j=1}^JQ_j\Big(\frac{|a_{j,0}|^2}{2}+\sum_{l=1}^\infty \big(|a_{j,l}|^2+|b_{j,l}|^2\big)\Big)\label{e:h'}\, .
\end{align}
Clearly, \eqref{e:armonica2} follows from the first and second inequality, with the constant $C = \bar{Q} Q_1 \leq \bar{Q} Q$, assuming
that $Q_1 = \max \{Q_1, \ldots, Q_j\}$. 
\eqref{e:armonica4} follows from the fact that, for any chosen $a>0$, if $b_0$ is sufficiently small and $0<b< b_0$, then
\[
(2a+b) \ell \leq \frac{\ell^2}{\bar Q Q_j} + \bar{Q} Q_j \ell a (a+b) \qquad \forall \ell \in \N\, .
\]
The latter claim is elementary and the reader can consult, for instance, Step 2 in the proof of \cite[Proposition 5.2]{DS1}.

Observe next that $\etaa\circ \VV$ is the classical harmonic extension of the single-valued function $\etaa\circ \UU|_{\partial D_1}$. 
We then have the classical estimates 
\[
\|\etaa\circ \VV\|_{L^\infty (D_{2^{ \sfrac{1}{\bar Q} }})}+ \|\etaa\circ \VV\|_{L^1 (D_1)} \leq C \|\etaa\circ \UU\|_{L^1 (\partial D_1)}\, .
\]
In particular we conclude easily
\[
\|\etaa\circ \HH\|_{L^\infty (B_{1/2})} + \|\etaa\circ \HH\|_{L^1 (B_1\setminus B_{1/2})} \leq C \int_{\partial B_1} |\etaa\circ \bar\NN|\, ,
\]
because the change of variables $\bW^{-1}$ is smooth on $B_1\setminus B_{1/2}$. The integrability of $|z|^{\gamma_0-1}$ on $B_1$ gives then 
\begin{align*}
\int_{B_1} |z|^{\gamma_0-1} |\etaa\circ \HH (z,w)|\, dz\leq & C \|\etaa\circ \HH\|_{L^\infty (B_{1/2})} + C\|\etaa\circ \HH\|_{L^1 (B_1\setminus B_{1/2})}\, , 
\end{align*}
which in turn completes the proof of \eqref{e:armonica3}. 

A similar argument proves \eqref{e:armonica5}. Using the classical theory of single valued harmonic functions we see indeed that 
$\|\zeta_j\|_{L^2 (B_1)} + \|\zeta_j\|_{L^\infty (B_{1/2})} \leq C \|\gamma_j\|_{L^2 (\partial B_1)}$ and thus, using the fact that $\bW$ is smooth on $B_1\setminus B_{1/2}$, we conclude that
\[
\|\HH\|^2_{L^\infty (B_{1/2})} + \|\HH\|^2_{L^2 (B_1\setminus B_{1/2})} \leq C \int_{\partial B_1} |\bar \NN|^2\, .
\]
From this we easily conclude \eqref{e:armonica5}.

\subsection{Proof of Proposition \ref{p:harmonic}: Step 2} We keep the notation of the previous paragraphs and assume that $\bar \NN$ is defined in $B_1$, after scaling. The specific scaling that we are using is the one which preserves the Lipschitz constant and is given by
\[
\bar\NN (z,w) \mapsto r^{-1} \bar \NN \big(r z, r^{\sfrac{1}{\bar{Q}}} w\big)\, 
\]
and by abuse of notation we keep the symbols $\bar \NN$, $\bar \LL$, etc. for all the rescaled maps.

Under this scaling we then have the estimates $\|\bar \NN\|_{L^\infty}\leq C \bmo^{\sfrac{1}{4}} r^{\gamma_0/2}$ and $\Lip (\bar \NN) \leq \bLambda (r)^{\eta_0}$ and we want to show that we can modify $\HH$ to a competitor $\bar \LL$ with $\Lip (\bar\LL)\leq C_{\ref{p:amin}}^{-1}$, satisfying 
\begin{align}
&\bar \LL|_{\partial B_1} = \bar \NN|_{\partial B_1} \qquad \mbox{and}\qquad \|\bar \LL\|_{L^\infty (B_1)} \leq C \|\bar \NN\|_{L^\infty (\partial B_1)}\label{e:app_arm_1}\\
&\int_{B_1} |D\bar \LL|^2 \leq C(1+r^\sigma) \int_{B_1} |D\HH|^2 + C \bLambda (r)^\sigma \int_{\partial B_1} |D\bar\NN|^2 \label{e:app_arm_2}\\
&\int_{B_r} |z|^{\gamma_0-1} |\bar\LL|^2 \leq C \int_{\partial B_1} |\bar \NN|^2\, \label{e:app_arm_5}\\
&\int_{B_1} |z|^{\gamma_0-1} |\etaa\circ \bar\LL| \leq C \int_{\partial B_1} |\etaa\circ \bar \NN|\, .\label{e:app_arm_3}
\end{align}
Observe that the harmonic functions $\zeta_j$ defined in \eqref{e:zeta_j} are Lipschitz in every ball $D_{1-t}$ for $0<t<1$ with an estimate of the form
\begin{equation}\label{e:Lipschitz_sciocca}
\|D \zeta_j\|_{L^\infty (D_{1-t})} \leq \frac{C}{t} \Lip (\gamma_j) \leq \frac{C}{t} \Lip (\bar \NN) \leq \frac{C \bLambda (r)^{\eta_0}}{t}\, .
\end{equation}
They are not Lipschitz up to the boundary $\partial D_1$ because the Dirichlet to Neumann map $\gamma_j \to \frac{\partial \zeta_j}{\rho} (1, \cdot)$ does not map $L^\infty$ into $L^\infty$. However we have the estimate
\[
\|D \zeta_j \|_{L^p (D_1)} \leq C_p \|\gamma_j\|_{W^{1,p} (\partial D_1)} \leq C_p\bLambda (r)^{\eta_0}\, 
\]
for every $p<\infty$. In particular, we can bound
\[
\|\zeta_j (1-t, \cdot)-\gamma_j \|_{W^{1,1} (\partial D_1)} \leq C_2 t^{\sfrac{1}{2}}\bLambda (r)^{\eta_0} \, ,
\]
which in turn implies
\begin{equation}\label{e:Lipschitz_scema}
\max |\zeta_j (1-t, \theta) - \gamma_j (\theta)| \leq C_2 t^{\sfrac{1}{2}}\bLambda (r)^{\eta_0}\, .
\end{equation}
Choose $t:= \bLambda (r)^{\sfrac{\eta_0}{2}}$ and define a new map $\xi_j$ as
\[
\xi_j (\rho, \theta) := \left\{
\begin{array}{ll}
\zeta_j (\rho, \theta) \quad & \mbox{for $\rho\leq 1-t$}\\ \\
\textstyle{\frac{1-\rho}{t}} \zeta_j (1-t, \theta) + \textstyle{\frac{\rho - (1-t)}{t}} \gamma_j (\theta) \quad &\mbox{for $1-t \leq \rho \leq 1$.}
\end{array}
\right.
\]
Now, \eqref{e:Lipschitz_sciocca} and \eqref{e:Lipschitz_scema} imply that $\|D\zeta_j\|\leq C \bLambda (r)^{\sfrac{\eta_0}{2}}$. Moreover we obviously have
\begin{align}
\int_{D_1} |D\xi_j|^2 \leq & \int_{D_1} |D\zeta_j|^2 + C \bLambda (r)^{\eta_0} \Big( \int_{\partial D_{1-t}} |D \zeta_j|^2 + \int_{\partial D_1} |D\gamma_j|^2\Big)\nonumber\\
\leq & \int_{D_1} |D\zeta_j|^2 + C r\bLambda (r)^{\eta_0} \int_{\partial B_1} |D\gamma_j|^2\, .\label{e:int_en}
\end{align}
We can now define two ``intermediate'' maps 
\[
\VV^0 (\rho, \theta) := \sum_{j=1}^J\sum_{i=1}^{Q_j}\a{\xi_j\left(\rho^{\sfrac{1}{Q_j}}, \frac{\theta+2\pi i}{Q_j}\right)}
\]
and $\LL^0 := \VV^0\circ \bW^{-1}$. It is then immediate to see that $\LL^0$ enjoys the bound $\Lip (\LL^0) \leq C \bLambda (r)^{\sfrac{\eta_0}{2}}$ on the 
domain $B_1\setminus B_{1/4}$ and that all the estimates \eqref{e:app_arm_1}, \eqref{e:app_arm_2} and \eqref{e:app_arm_3}. On the other hand the differential $D \LL^0$ is singular in the origin and in fact it is rather easy to see that we have the bound
\begin{equation}\label{e:higher}
|D\LL^0 (z,w)|^2 \leq C |z|^{2-\sfrac{2}{(Q\bar Q)}} \int_{B_1} |D\LL^0|^2\, .
\end{equation}
In order to produce $\bar \LL$ we need to smooth the singularity of $\LL^0$ at the origin. There are several ways to do this and we present here one possibility. First of all we fix $2<p< 2 Q \bar Q/(2 Q \bar Q -2)$ and observe that \eqref{e:higher} yields the estimate
\begin{equation}\label{e:higher2}
\int_{B_{3/4}} |D\LL^0 (z,w)|^p \leq C \Big( \int_{B_1} |D\LL^0|^2 \Big)^{\sfrac{p}{2}}\, .
\end{equation}
Next we define
\[
M |D\LL^0 (z,w)|:= \sup_{\rho< 1/4} \frac{1}{\rho^2} \int_{B_\rho (z,w)} |D\LL^0 (z,w)|
\]
and let
\[
A := \{(z,w) : M |D\LL^0 (z,w)| \geq c_0\}
\]
where $c_0$ is a constant to be chosen later. Observe that, given the Lipschitz bound for $\LL^0$ outside the origin, for $r$ sufficiently small the set $A$ is contained in $B_{1/2}$. Arguing as in the proof of \cite[Proposition 4.4]{DS1} we have the Lipschitz estimate $\Lip (\LL^0) \leq C c_0$ on $B_1\setminus A$, where $C$ is a dimensional constant. We can then use the Lipschitz extension of \cite[Theorem 1.7]{DS1} to extend $\LL^0$ to $\bar \LL$ on $A$ so that $\Lip (\LL) \leq C c_0$. Choosing $c_0$ accordingly we achieve the desired Lipschitz bound on $B_1$. As for \eqref{e:app_arm_1} and \eqref{e:app_arm_5} observe that the extension satisfies 
\[
\|\bar \LL\|^2_{L^\infty (B_{1/2})} \leq C \|\HH\|^2_{L^\infty (B_{3/4})}
\]
and coincides with $\LL_0$ on $B_1\setminus B_{1/2}$. As for \eqref{e:app_arm_3}, it would suffice to show that
$|\etaa\circ \bar\LL|\leq C |\etaa\circ \bar \NN|$. This can be easily achieved in the following way: we make a Lipschitz extension of $\LL^0$, subtract from each sheet the average and then sum back to each sheet a Lipschitz extension of $\etaa\circ \LL^0$. 

As for \eqref{e:app_arm_2} we compute
\begin{align}
\int |D\bar \LL|^2 \leq & \int |D\LL^0|^2 + C c_0^2 |A| \leq  \int |D\LL^0|^2 + C c_0^{2-p} \int_{B_{3/4}} |D\LL^0|^p\notag\\
\leq & \int |D\LL^0|^2 \Big(1+ C c_0^{2-p} \Big(\int |D\LL^0|^2\Big)^{\sfrac{p}{2}-1}\Big)\, .\label{e:int_en_2}
\end{align}
Observe that $p/2 -1 > 0$ and that by \eqref{e:int_en} and \eqref{e:armonica2}
\[
\int |D\LL^0|^2 \leq \int |D\HH|^2 + C \bLambda (r)^{\sfrac{\sigma}{2}} \int_{\partial B_1} |D \bar \NN|^2 \leq C \int_{\partial B_1} |D \bar \NN|^2 \leq C r^{\sigma}\, .
\]
so that
\[
\int_{B_1} |D\bar \LL|^2 \leq (1+C\, r^\sigma) \int_{B_1} |D\HH|^2 + C r^\sigma \int_{\partial B_1} |D\bar\NN|^2
\stackrel{\eqref{e:armonica2}}{\leq}  \int_{B_1} |D\HH|^2 + C r^\sigma \int_{\partial B_1} |D\bar\NN|^2\, .
\]

\section{Outer variations and the poincar\'e inequality}\label{s:poincare}

In this section we begin to exploit the first variations of the area functional on $T$ in conjunction with the estimates of the previous section. 
The main conclusion will be the following Poincar\'e inequality: 

\begin{theorem}[Poincar\'e inequality]\label{t:poincare}
There exists a constant $C_{\ref{t:poincare}}>0$ such that if $r$ is sufficiently small, then
\begin{equation}\label{e:poincare}
\bH(r) \leq C_{\ref{t:poincare}}\, r\, \bD(r)\,.
\end{equation}
\end{theorem}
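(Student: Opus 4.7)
I would establish the Poincar\'e inequality $\bH(r)\le C_{\ref{t:poincare}}\,r\,\bD(r)$ by combining an \emph{outer-variation identity} for the current $T$ with the almost-minimality estimate \eqref{e:AM1 bis} already obtained in Corollary~\ref{c:amin_ineq}.

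The first step is to compute the first variation of mass along the vector field $X(p):=\phi(|z|)\,\vec N(p)$, where $\vec N$ is the normal part of the multivalued approximation $\FF$ (projected onto $T\Sigma$ in cases (a) and (c) to make $X$ admissible), and $\phi$ is a smooth radial cutoff approaching $\chi_{B_r}$. Using the Taylor expansion of $\dv_T X$ from \cite{DS2}, the curvature bound $|A_\cM|\le C\bmo^{\sfrac12}|z|^{\gamma_0-1}$ and the estimates \eqref{e:Ndecay}, \eqref{e:Lip_N} on $\NN$, together with the respective almost-minimality in cases (a)--(c) (arguing in the three cases as in Section~\ref{s:AM}), one derives, for a.e.\ $r$, an identity
\[
\bD(r)\;=\;\int_{\de B_r}\sum_{i=1}^{Q}\langle N_i,\partial_\nu N_i\rangle\;+\;\mathrm{OV}(r),
\]
where the error term $\mathrm{OV}(r)$ is controlled by $\bLambda^{\eta_0}(r)\bD(r)+\bF(r)+\bmo^{\sfrac12}r^{\gamma_0}\int_{\de B_r}|\etaa\circ \NN|$. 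An application of the Cauchy--Schwarz inequality to the boundary integral then gives
\[
\bD(r)^{2}\;\le\;\bH(r)\,\bD'(r)+\text{(small errors)},
\]
i.e. a lower bound $\bD'(r)\ge \bD(r)^{2}/\bH(r)$ modulo controllable errors.

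The second step is to feed this lower bound on $\bD'$ into the almost-minimality inequality \eqref{e:AM1 bis},
\[
\bD(r)\;\le\;C\,r\,\bD'(r)+C\,\bH(r)+C\,\bF(r)+C\,\bmo^{\sfrac12}r^{\gamma_0}\!\int_{\de B_r}|\etaa\circ \NN|,
\]
but in reverse: rearrange \eqref{e:AM1 bis} as $Cr\bD'(r)\ge \bD(r)-C\bH(r)-(\text{errors})$, and substitute $\bD'(r)\ge \bD(r)^{2}/\bH(r)$ to obtain, after multiplying by $\bH(r)$,
\[
C\,r\,\bD(r)^{2}\;\ge\;\bD(r)\,\bH(r)-C\,\bH(r)^{2}-\bH(r)\!\cdot\!(\text{errors}).
\]
Using that by \eqref{e:Ndecay} we already have a crude bound $\bH(r)\le C\bmo^{\sfrac12}r^{3+\gamma_0}$, the square $\bH(r)^{2}$ and the cross term $\bH(r)(\text{errors})$ are of strictly lower order than $\bD(r)\bH(r)$ for $r$ small, and can be absorbed into the left-hand side. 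This gives $\bH(r)\le Cr\bD(r)$.

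The \emph{main obstacle} is the closure of the absorption, particularly the $\etaa\circ \NN$ contribution, which appears multiplicatively in both the outer-variation error and in \eqref{e:AM1 bis}. To bound $r^{\gamma_0}\int_{\de B_r}|\etaa\circ \NN|$ for a good set of radii, I would apply a Fubini/coarea argument to \eqref{e:media_pesata}: the radial integral of $|z|^{\gamma_0-1}|\etaa\circ \NN|$ on $B_1$ being bounded by $C\bLambda^{\eta_0}(r)\bD(r)+C\bF(r)$ yields smallness on average, hence on a dense set of radii; a standard continuity argument in $r$ (using that both $\bH$ and $\bD$ are absolutely continuous in $r$) extends the inequality to all $r\in(0,1)$. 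Together with the smallness of $\bLambda(r)\to 0$ as $r\downarrow 0$, which makes every factor $\bLambda^{\eta_0}(r)$ absorbable, this closes the proof once the scale has been taken sufficiently small.
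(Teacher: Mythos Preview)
Your proposal contains a genuine logical gap in Step~2. You derive from the outer variation and Cauchy--Schwarz the \emph{lower} bound $\bD'(r)\gtrsim \bD(r)^{2}/\bH(r)$, and from \eqref{e:AM1 bis} the \emph{lower} bound $Cr\,\bD'(r)\ge \bD(r)-C\bH(r)-(\text{errors})$. You then ``substitute'' the first into the second to arrive at $Cr\,\bD(r)^{2}\ge \bD(r)\bH(r)-\ldots$. But this substitution is invalid: replacing $\bD'(r)$ on the left-hand side of the second inequality by a \emph{lower} bound makes the left-hand side smaller, not larger, so the resulting chain $Cr\,\bD(r)^{2}/\bH(r)\le Cr\,\bD'(r)$ tells you nothing about the right-hand side. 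To reach your displayed inequality you would need $\bH(r)\bD'(r)\le \bD(r)^{2}$, which is the opposite of what Cauchy--Schwarz yields. Two lower bounds on $\bD'(r)$ cannot be combined in this way, and no pointwise algebraic manipulation of these two inequalities alone will produce $\bH(r)\le Cr\bD(r)$.

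The paper's argument is genuinely different and hinges on \emph{integration over radii}. From the identity $\bH'(r)=\bH(r)/r+2\bE(r)$ one gets $\bH(r)/r=\int_0^r 2\bE(\rho)/\rho\,d\rho$. The outer variation estimate \eqref{e:OV} bounds $\bE(\rho)$ by $\bD(\rho)$ plus integrable errors, so $\bH(r)/r\lesssim \int_0^r \bD(\rho)/\rho\,d\rho + (\text{errors})$. The key remaining step is to control $\int_0^r \bD(\rho)/\rho\,d\rho$ by $\bD(r)$: this is achieved by dividing \eqref{e:AM1 bis} by $\rho$ and integrating, which converts the dangerous term $Cr\bD'(r)$ into the harmless $C\bD(r)$. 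A preliminary lemma (Lemma~\ref{l:F}) bounds $\bF(r)$ in terms of $r^{\gamma_0-1}\bH(r)+r^{\gamma_0}\bD(r)$, so all error contributions close with a factor $r^{\gamma_0-1}\bH(r)$ which is absorbable for $r$ small. This integrated approach is what makes the two inputs (outer variation and almost minimality) cooperate; your pointwise combination cannot.
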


We record however the two main tools used to prove Theorem \ref{t:poincare}, since they will be useful in the future.
The first one is an elementary computation. In order to state it we introduce the quantity
\begin{equation}\label{e:E}
\bE (r) := \int_{\de B_r} \sum_{j=1}^{Q} \langle \NN\,_j, D_\nu \NN\,_j\rangle\, .
\end{equation}

\begin{lemma}\label{l:H'}
$\bH$ is a Lipschitz function and  the following identity holds for a.e. $r \in (0,1)$ 
\begin{equation}\label{e:H'}
\bH' (r) = \frac{\bH(r)}{r} + 2\,\bE(r)\,.
\end{equation}
\end{lemma}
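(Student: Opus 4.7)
The strategy is the standard computation for a frequency function, the only twist being that the base disk $B_r \subset \gira_{\bar Q}$ has a conical singularity of angle $2\pi\bar Q$ at the origin. The plan is to switch to geodesic polar coordinates on $\gira_{\bar Q}$, differentiate under the integral sign, and identify the two resulting terms.

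First, I would fix geodesic polar coordinates $(\rho,\theta)$ on $\gira_{\bar Q}\setminus\{0\}$, with $\rho=|z|$ and $\theta\in[0,2\pi\bar Q)$, so that the canonical flat metric $e_{\bar Q}$ takes the form $d\rho^2+\rho^2\,d\theta^2$ (equivalently, one can pull back via the conformal unwinding $\bW(\zeta)=(\zeta^{\bar Q},\zeta)$ of Remark \ref{r:W12} and work on an ordinary Euclidean disk). In these coordinates the sphere $\partial B_r$ is the circle $\{\rho=r\}$, of length $2\pi\bar Q\, r$, and the outer unit normal $\nu$ coincides with $\partial_\rho$. Since $\NN$ is locally Lipschitz on $\gira_{\bar Q}\setminus\{0\}$, we may split the multivalued map locally into single-valued sheets $\NN_j$, $j=1,\dots,Q$, and write
\begin{equation*}
\bH(r)\;=\;\int_{\partial B_r}|\NN|^2\,d\cH^1
\;=\;r\int_0^{2\pi\bar Q}\sum_{j=1}^Q|\NN_j(r,\theta)|^2\,d\theta.
\end{equation*}

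Next, I would justify differentiability. Fix any $r_0\in(0,1)$: on the annulus $\{r_0\le\rho\le 1\}$ the estimate \eqref{e:Lip_N} gives a uniform Lipschitz bound on $\NN$, so the integrand $(r,\theta)\mapsto\sum_j|\NN_j(r,\theta)|^2$ is Lipschitz in both variables, the sheet decomposition is globally consistent modulo a permutation, and we may differentiate under the integral sign on $[r_0,1)$. Hence $\bH$ is Lipschitz on every $[r_0,1)$, and (since $\bH(r)\to0$ as $r\downarrow0$ thanks to \eqref{e:Ndecay}) Lipschitz on $[0,1)$. Carrying out the differentiation,
\begin{equation*}
\bH'(r)\;=\;\int_0^{2\pi\bar Q}\sum_j|\NN_j(r,\theta)|^2\,d\theta
\;+\;2r\int_0^{2\pi\bar Q}\sum_j\langle\NN_j,\partial_\rho\NN_j\rangle(r,\theta)\,d\theta.
\end{equation*}

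Finally, I would identify the two pieces. The first summand equals $r^{-1}\bH(r)$ by the very definition of $\bH$. For the second, I use that $\partial_\rho=\nu$ on $\partial B_r$ and that $d\cH^1\lfloor\partial B_r=r\,d\theta$ to recognise it as
\begin{equation*}
2\int_{\partial B_r}\sum_j\langle\NN_j,D_\nu\NN_j\rangle\,d\cH^1\;=\;2\bE(r),
\end{equation*}
which is the definition \eqref{e:E}. Adding the two contributions gives \eqref{e:H'}. The only delicate point is the behaviour at $\rho=0$, but since both sides of \eqref{e:H'} are interpreted pointwise a.e.\ on $(0,1)$ and $\bH$ is already seen to be Lipschitz up to $0$, no boundary term arises, exactly as spelled out in Remark~\ref{r:W12}.
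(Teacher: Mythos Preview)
Your proof is correct and follows essentially the same strategy as the paper's own argument: the paper introduces the rescaling map $i_r(z,w)=(rz,r^{1/\bar Q}w)$, writes $\bH(r)=r\int_{\partial B_1}|\NN|^2(i_r(z',w'))$, and differentiates using the chain rule for multifunctions, which is exactly your polar-coordinate computation in different notation. One minor point: your passage from ``Lipschitz on every $[r_0,1)$'' to ``Lipschitz on $[0,1)$'' implicitly relies on the Lipschitz constant being uniform in $r_0$, which does hold here since \eqref{e:Lip_N} and \eqref{e:Ndecay} give global bounds on $B_1$, but you might state this explicitly.
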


The second identity is a consequence of the first variations of $T$ under specific vector fields, which we call ``outer variations'': such variations ``stretch'' the normal bundle of $\cM$ suitably and they are defined using the map $\NN$. In the case of semicalibrated currents it is convenient to modify the Dirichlet energy suitably to gain a new quantity which enjoys better estimates. Thus, from now on $\bOmega$ will denote $\bD$ in the cases (a) and (c) of Definition \ref{d:semicalibrated}, whereas in the case (b) it will be given by
\begin{align*}
\bOmega (r) := &\bD (r) + \bL (r) \\
:= &\bD (r) + \int_{\Psii(B_r)}\sum_{i=1}^{Q}\, \langle \xi_1 (p) \wedge D_{\xi_2} N_i (p) \wedge N_i (p) + D_{\xi_1} N_i (p)
\wedge \xi_2 (p) \wedge  N_i (p),d\omega (p)\rangle\, dp\, .
\end{align*}

\begin{proposition}[Outer variations]\label{p:OV}
There exist constants $C_{\ref{p:OV}}>0$ and $\kappa >0$ such that,
if $r>0$ is small enough, then the inequality
\begin{equation}\label{e:OV}
\left|\bOmega (r)- \bE (r)\right| \leq C_{\ref{p:OV}}\,\mathcal{E}_{OV}(r)\,
\end{equation}
holds with
\begin{align}\label{e:stima_Eov_1}
\mathcal{E}_{OV}(r)& = \bLambda(r)^{\kappa} \Big(\bD(r)+ \frac{\bH(r)}{r} + r \bD' (r)\Big) + \bF(r)  +
 r^{1+\gamma_0} \frac{d}{dr}\|T-\bT_F\|(\p^{-1}(\Psii(B_r)))   \,.
\end{align}
Moreover
\begin{align}\label{e:L0}
|\bL(r)| \leq & C\,\bmo^{\sfrac12}\,r^{2-\gamma_0} \bD(r) + C\,\bmo^{\sfrac12}\,\bF(r).
\end{align}
\end{proposition}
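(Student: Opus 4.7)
The plan is to test the appropriate first-variation identity for $T$ against an \emph{outer-variation} vector field of the form $X_\varepsilon(q) := \varphi_\varepsilon(|\P(q)|)\,(q-\P(q))$, where $\P$ is the nearest-point projection onto $\cM$ provided by Lemma \ref{l:density} and $\varphi_\varepsilon \in C^\infty_c([0,1))$ is a standard cutoff approximating the indicator $\chi_{[0,r]}$ from above. On the graph current $\bT_F = \bT_\FF\circ \Psii^{-1}$ this vector field evaluated at $p+N_i(p)$ equals $\varphi_\varepsilon(|p|)\,N_i(p)$, so it stretches each sheet of $N$ radially in the normal direction to $\cM$; letting $\varepsilon \downarrow 0$ will recover $\bD(r)$ minus the boundary flux $\bE(r)$ from the graphical part.

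First I will compute $\delta \bT_F(X_\varepsilon)$ via the Taylor expansion of the area (and calibration) functionals around a $C^2$ multivalued graph in the normal bundle of $\cM$ developed in \cite[Section 3]{DS2}. The leading term reproduces $\int \varphi_\varepsilon |DN|^2 + \int \varphi_\varepsilon' |N|^2 / 2$ times the appropriate Jacobian factor, which after the limit $\varepsilon \downarrow 0$ gives $\bD(r) - \bE(r)$; the quadratic remainders involve $|A_\cM|^2 |N|^2$, $|DN|^4$ and products $|A_\cM|\,|DN|\,|N|$, and these are absorbed into $\bLambda^{\kappa}(\bD + \bH/r + r\bD')$ using the pointwise bounds $|A_\cM(\Psii(z,w))|\leq C\bmo^{1/2}|z|^{\gamma_0-1}$ from Theorem \ref{t:cm}(i), the Lipschitz estimate \eqref{e:Lip_N} and the weighted bound \eqref{e:media_pesata}. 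The discrepancy $\delta T(X_\varepsilon) - \delta \bT_F(X_\varepsilon)$ is handled by the mass comparison \eqref{e:diff masse}: since $|X_\varepsilon(q)| \leq |q-\P(q)| \leq C \bmo^{1/4} r^{1+\gamma_0/2}$ on $\supp T$ and $X_\varepsilon$ is essentially supported on $\P^{-1}(\Psii(B_r))$, the slicing identity for the nondecreasing function $r \mapsto \|T-\bT_F\|(\P^{-1}(\Psii(B_r)))$ produces precisely the remainder $r^{1+\gamma_0} \frac{d}{dr}\|T-\bT_F\|(\P^{-1}(\Psii(B_r)))$ in \eqref{e:stima_Eov_1}, while the remaining contributions are of type $\bF(r)$.

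Next I will plug in the appropriate first-variation identity for $T$ itself. In cases (a) and (c) the variation $X_\varepsilon$ has to be first projected onto $T\Sigma$, and the orthogonal component contracted with $A_\Sigma$ produces an error $\lesssim \bc\,\|X_\varepsilon\|_\infty \|T\|(\P^{-1}\Psii(B_r))$ that, thanks to $\bc^2 \leq \bmo$ and $\|X_\varepsilon\|_\infty \leq C\bmo^{1/4} r^{1+\gamma_0/2}$, is of strictly higher order than $\bF(r)$; for case (c) one works on $0\cone T$ with an adapted cone-stretching field and the extra contribution from the radial direction is again absorbed. In case (b) the calibration identity reads $\delta T(X_\varepsilon) = \int d\omega(X_\varepsilon, \vec T)\,d\|T\|$, and expanding $\vec T = \vec{\bT_F} + O(\|T-\bT_F\|)$ and $\vec{\bT_F}$ by the Taylor expansion, the linear-in-$N$ term in $d\omega$ paired with the graphical $2$-vector produces exactly $2\bL(r)$, which is why the quantity on the left-hand side must be $\bOmega = \bD + \bL$ rather than $\bD$; the quadratic-in-$N$ and curvature-type remainders fit into $\mathcal{E}_{OV}(r)$. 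Combining all pieces and passing $\varepsilon \downarrow 0$ yields \eqref{e:OV}.

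Finally, the auxiliary estimate \eqref{e:L0} is direct: writing $\bL(r)$ via its integrand as a trilinear form in $\xi_1\wedge\xi_2$, $DN$ and $N$ contracted with $d\omega$, bounding $|d\omega|\leq C\bmo^{1/2}$, and applying Cauchy--Schwarz to the factor $|DN|\cdot|N|$ gives a bound by $\bmo^{1/2}\int_{\Psii(B_r)}(|DN|^2 + |N|^2)$; using \eqref{e:Ndecay} for the $L^\infty$ bound $|N|(p)\leq C\bmo^{1/4}|\P(p)|^{1+\gamma_0/2}$ splits the $|N|^2$ integral into the parts accounted for by $r^{2-\gamma_0}\bD(r)$ and $\bF(r)$ (via the weight $|z|^{\gamma_0-1}$ implicit in $\bF$). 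The main technical obstacle will be the bookkeeping between the two cutoff-induced boundary fluxes (the one producing $\bE(r)$ and the one arising from $\varphi_\varepsilon'$ interacting with curvature remainders), together with the construction of the correct cone-adapted outer-variation field in case (c) so that the identification of $\bD(r)-\bE(r)$ as the principal term is preserved.
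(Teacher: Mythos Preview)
Your overall strategy matches the paper's: test the first variation of $T$ against the outer field $X(q)=\varphi(\p(q))(q-\p(q))$, expand $\delta\bT_F(X)$ via the Taylor expansion of \cite{DS2,DS5}, compare with $\delta T(X)$ using the structural identities in each of the three cases, and pass to the limit in the cutoff. There are, however, three points where the proposal goes astray.

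First, the leading boundary term in $\delta\bT_F(X)$ is \emph{not} $\int\varphi_\varepsilon'|N|^2/2$; the correct expression (cf.\ \cite[Section~3.3]{DS5}) is
\[
\int_{\gira}\phi\!\big(\tfrac{|z|}{r}\big)\,|D\NN|^2
\;+\;r^{-1}\!\int_{\gira}\phi'\!\big(\tfrac{|z|}{r}\big)\sum_{j}\langle\NN_j, D_\nu\NN_j\rangle\, ,
\]
and it is the second integral whose limit is $-\bE(r)$. Had you really obtained $\int\varphi_\varepsilon'|N|^2/2$, the limit would be $-\tfrac12\bH(r)$, which is not $\bE(r)$ (compare Lemma~\ref{l:H'}). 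You state the correct conclusion $\bD(r)-\bE(r)$, so this may only be a slip, but the actual computation hinges on getting this term right.

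Second, in case~(c) you anticipate a ``cone-adapted outer-variation field'' and flag this as the main technical obstacle. It is not needed: the paper treats (c) \emph{exactly} as (a), by writing $X=X^T+X^\perp$ with respect to $T\Sigma=T(\partial\bB_R)$ and using $\delta T(X^T)=0$ (which holds since cross-sections of stationary cones are stationary in the sphere). The normal part $X^\perp$ is estimated via $|X^\perp(p)|\leq C\bc\,|p-\p(p)|^2$, producing an error bounded by $\bF(r)$. The cone construction appears only in the proof of Proposition~\ref{p:amin}, not here.

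Third, your derivation of \eqref{e:L0} does not give the stated bound. An unweighted Cauchy--Schwarz on $\int|DN|\,|N|$ yields at best $C\bmo^{1/2}\big(\bD(r)+\int_0^r\bH(t)\,dt\big)$, and no subsequent use of the $L^\infty$ bound on $N$ recovers the claimed split $r^{2-\gamma_0}\bD(r)+\bF(r)$. The paper instead uses the \emph{weighted} Cauchy--Schwarz
\[
|D\NN|\,|\NN|\;\leq\;\tfrac12|z|^{2-\gamma_0}|D\NN|^2+\tfrac12|z|^{\gamma_0-2}|\NN|^2
\]
and recognizes the second integral as $\bF(r)$ directly.
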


\subsection{Proof of Lemma \ref{l:H'}} The Lipschitz regularity of $\bH$ follows from the Lipschitz regularity of $\NN$. 
Consider next the map $i_r : \gira \to \gira$ given by
$i_r(z,w)=\left(rz, r^{\sfrac{1}{\bar Q}}w\right)$.
By a simple change of variables we compute
\[
\bH(r)=\int_{\de B_1} |\NN|^2(i_r(z',w'))\, r\, .
\]
The formula \eqref{e:H'} is then an elementary computation using the chain rule for multifunctions, cf. \cite{DS1}. 

\subsection{Proof of Proposition \ref{p:OV}}\label{ss:OV} The inequality \eqref{e:L0} is a simple consequence of 
\[
|\bL (r)| \leq C\bmo^{\sfrac{1}{2}} \int_{B_r} |D\NN| |\NN| \leq C\bmo^{\sfrac{1}{2}} \int_{B_r} |z|^{2-\gamma_0} |D\NN|^2 
+ C \bmo^{\sfrac{1}{2}} \int_{B_r} |z|^{\gamma_0-2} |\NN|^2\, .
\]

\medskip

In order to show \eqref{e:OV} we fix a test function $\phi\in C^\infty_c (\R)$, nonnegative, symmetric, with support in $]-1,1[$ and monotone decreasing on $[0,1]$. We then follow \cite[Section 3.3]{DS5} and, having fixed $r$, we define the vector field $X^o$ on $\bV_{u,a}$ via
\[
X^o (p) := \varphi (\p (p)) (p-\p (p))\qquad
\mbox{where}\qquad
\varphi (\Psii (z,w)) = \phi \left(\textstyle{\frac{|z|}{r}}\right)\, .
\]
For $r$ small enough, by \eqref{e:Lip_N} we can argue as in \cite[Section~3.3]{DS5} and 
deduce via the change of coordinates given by $\Psii$, that  
\begin{equation}\label{e:OV_0}
\delta \bT_F(X)= \int_{\gira}\phi \big( \textstyle{\frac{|z|}{r}}\big)\,|D\NN|^2 + r^{-1} \int_{\gira} \phi' \big( \textstyle{\frac{|z|}{r}}\big) \,\sum_{j=1}^{Q} \langle \NN\,_j, D_\nu \NN\,_j\rangle + \sum_{i=1}^3 \textup{Err}_i^o,
\end{equation}
with
\begin{align}\label{e:outer_resto_1}
{\rm Err}_1^o & = \Big\vert\int_\cM \varphi\, \langle H_\cM, \etaa\circ {N}\rangle\Big\vert \leq C\, 
\bmo^{\sfrac12}\int_{B_r} \,|z|^{\gamma_0 - 1}\, |\etaa\circ \NN|
\stackrel{\eqref{e:media_pesata}}{\leq} C\, \bLambda^{\eta_0}(r)\,\bD(r) + C\,\bF(r)\,,
\end{align}
\begin{equation}\label{e:outer_resto_2}
{\rm Err}_2^o \leq C \int_\cM |\varphi| \,|A_{\cM}|^2\,|N|^2
\leq C\, \bF(r)\,,
\end{equation}
\begin{align}\label{e:outer_resto_3}
{\rm Err}_3^o \leq & C \int_\cM \Big(|\varphi| \big(|D{N}|^2\, |{N}|\, |A_{\cM}|
+ |D{N}|^4\big) +
|D\varphi|\, \big(|DN|^3 \,|{N}| + |D{N}|\, |{N}|^2 \,|A_{\cM}|\big)\Big)\notag\\
\leq & C \int_{B_r}  \Big[\Big(\frac{|{\NN}|^2}{|z|^{2-2\,\gamma_0}}+ 
|D{\NN}|^4\Big)- r^{-1} \phi' ( \textstyle{\frac{|z|}{r}})\,r^{1+\gamma_0}\,|D \NN|^3 - r^{-1} \phi' ( \textstyle{\frac{|z|}{r}})\,|D 
\NN|\,\frac{|\NN|^2}{|z|^{1-\gamma_0}}\Big] \notag\\
\stackrel{\eqref{e:Lip_N}\&\eqref{e:Ndecay}}{\leq} &
C\,\bLambda^{\eta_0}(r)\,\bD(r)+C\bF(r)
- C \,\bLambda(r)^{\eta_0}\,\int_{B_r} r^{-1} \phi' ( \textstyle{\frac{|z|}{r}})\,\frac{|\NN|^2}{|z|^{1-\gamma_0}}\notag\\
&\qquad - C \,r^{1+\gamma_0}\,\bLambda^{\eta_0}\,\int_{B_r} r^{-1}  \phi' ( \textstyle{\frac{|z|}{r}})\,|D\NN|^2\,.
\end{align}
(We recall that $\phi' \leq 0$ on $[0,1])$). 

\medskip

We next drop the superscript from $X^o$ and we distinguish two situations:
\begin{itemize}
\item In the cases (a) and (c) of Definition \ref{d:semicalibrated}, we denote by $X^\perp$ and $X^T$
the projections of $X$ on the normal and the tangential bundle of $\Sigma$, respectively. Then $\delta T(X^T) 
=0$ and therefore
\begin{equation*}
|\delta \bT_F(X)| \leq \underbrace{|\delta \bT_F(X)-\delta T(X)|}_{\textup{Err}_4^o}+\underbrace{|\delta T(X^\perp)|}_{\textup{Err}^o_5};
\end{equation*}
\item In case (b), since $\delta T(X) = T(dw\ser X)$, we estimate
\begin{equation*}
\big|\delta \bT_F(X) - \bT_F(d\omega\ser X)\big|
\leq \underbrace{|\delta \bT_F(X)-\delta T(X)|+ |T(d\omega \ser X)-\bT_F(d\omega\ser X)|}_{\textup{Err}_4^o}\, .
\end{equation*}
\end{itemize}
In both cases we have
\begin{align*}
\textup{Err}^o_4
\leq & Q \int_{\supp (T)\setminus \im (F)}\left|\dv_{\vec T} X\right|\, d\|T\|
+ Q\int_{\im (F)\setminus \supp (T)} \left|\dv_{\vec \bT_F} X\right|\, d\|\bT_F\|\\
&\quad + Q \|d\omega\|_\infty \int |X| d\|T-\bT_F\|\, ,
\end{align*}
where we use the convention that $\omega=0$ in the cases (a) and (c). We then can estimate
\begin{align}
\textup{Err}^o_4&\leq C\,\int
\left(\textstyle{\ph'(\p(p)) \,|p-\p(p)| +\ph(\p(p))}\right)
\,d\|T-\bT_F\|\notag\\
& \stackrel{\eqref{e:Ndecay}\&\eqref{e:diff masse}}{\leq} C\,\bLambda^{\eta_0}(r)\,\bD(r) +C\, \bF(r) + C\, r^{1+\gamma_0}\underbrace{\int
|\nabla \ph(\p(p))| \,|p-\p(p)|\,d\|T-\bT_F\|}_{S (\ph)}\,.\label{e:Err4_o}
\end{align}
In case (b) we have that
\begin{align*}
\bT_F(d\omega\ser X) &= \int_{\cM} \ph\,\sum_{i=1}^{Q} \langle (\xi_1+D_{\xi_1} N_i)\wedge(\xi_2+D_{\xi_2} 
N_i\cdot\xi_2)\wedge  N_i\,,\,d\omega(p+ N_i(p))\, .
\end{align*}
Clearly
\begin{align*}
&\Big|\bT_F (d\omega \ser X) - \int_{\cM} \ph\,\sum_{i=1}^{Q} \langle (\xi_1+D_{\xi_1} N_i)\wedge(\xi_2+D_{\xi_2} 
N_i\cdot\xi_2)\wedge  N_i\,,\,d\omega(p)\rangle\Big|\\ 
\leq &\; C \|d\omega\|_1 \int \ph |N|^2 
\end{align*}
and we can therefore conclude
\begin{align*}
&\Big|\bT_F (d\omega \ser X)  - \int_{\cM} \ph\, \sum_{i=1}^{Q}\, \langle \xi_1 (p) \wedge D_{\xi_2} N_i (p) \wedge N_i (p) + D_{\xi_1} N_i (p) \wedge \xi_2 (p) \wedge  N_i (p),d\omega(p)\rangle\Big|\\
\leq & C\|d\omega\|_0 \int \ph |N| |DN|^2 + C \|d\omega\|_0 \int \ph |\etaa\circ N|+C \|d\omega\|_1 \int \ph |N|^2\, . 
\end{align*}
Letting $\phi$ converge to the characteristic function of the interval $[-1,1]$, we reach the conclusion \eqref{e:OV}. The only term which needs some care is the 
term $S (\ph)$ in \eqref{e:Err4_o}. Note that we can approximate the characterstic function of $[-1, 1]$ with an increasing sequence of functions $\phi_j$ with the property that $|\phi'_j|\leq C j$, $0\leq \phi_j \leq 1$ and $\phi_j \equiv 1$ on $[-1+1/j, 1-1/j]$. Then we would have 
\[
\limsup_j S (\ph_j) \leq C \limsup_j \frac{ j}{r} \|T - \bT_F\| (\Psii (B_r\setminus B_{r (1-1/j)})) \leq C \frac{d}{dr} \|T - \bT_F\| (\Psii (B_r))\, ,
\]
by the monotonicity of the function $r\mapsto \|T - \bT_F\| (\Psii (B_r))$. 

In the cases (a) and (c) we follow the same argument, but we need to bound the additional term $\textup{Err}^o_5$. In order to deal with the latter
term we argue as in \cite[Section 4.1]{DS5}. In particular we bound
\begin{align}
\textup{Err}^o_5 &\leq \left|\int \dv_{\vec T} X^\perp\, d\|T\| \right|\notag\\
&\leq \underbrace{\int_{\supp (T)\setminus \im (F)}\left|\dv_{\vec T} X\right|\, d\|T\|
+ \int_{\im (F)\setminus \supp (T)} \left|\dv_{\vec \bT_F} X\right|\, d\|\bT_F\|}_{I_1}\notag\\
&\qquad + \underbrace{\left|\int\langle X^\perp,h(\vec{\bT}_F(p))\rangle\,d\|\bT_F\|\right|}_{I_2}\, ,
\end{align}
where $h (v_1\wedge v_2) := \sum_{i=1}^2 A_\Sigma (v_i, v_i)$. 
Since the projection on the normal to $\Sigma$ is a $C^{2,\eps_0}$ map, $X^\perp$ enjoys the same $C^1$ bounds as $X$ and $I_1$ can be controlled as $\textup{Err}^o_4$. The term $I_2$ can be estimated using
\[
|X^{o\perp} (p)| = \varphi\, |\p_{T_p \Sigma^\perp} (p-\p (p))| \leq C \mathbf{c} (\Sigma)\,\ph\, |p-\p (p)|^2 \leq C 
\bmo^{\sfrac{1}{2}}\,\ph\, |p-\p (p)|^2\quad\forall\;p\in\Sigma.
\]  
In particular we achieve $I_2\leq C \bH (r)$, which concludes the proof. 

\subsection{Proof of Theorem \ref{t:poincare}} In order to prove the theorem we start estimating the error term
$\bF$.

\begin{lemma}\label{l:F} 
There exist a constant $C_{\ref{l:F}}>0$ (depending on $\gamma_0$) such that
\begin{equation}\label{e:F}
\bF(r)\leq C_{\ref{l:F}}\,r^{\gamma_0 -1}\,\bH(r)+C_{\ref{l:F}}\,r^{\gamma_0}\, \bD(r) \quad \forall\;r \in (0,1).
\end{equation}
\end{lemma}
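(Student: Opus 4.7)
The plan is to rewrite $\bF(r)$ as a single integration by parts in which the radial derivative of $\bH(t)/t$ is controlled through Lemma \ref{l:H'}, and then to absorb the resulting cross-term by Cauchy--Schwarz and Young's inequality.

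\medskip

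\noindent\textbf{Step 1 (Setup and integration by parts).}
First I would set $\varphi(t):=\bH(t)/t$, so that
\[
\bF(r)=\int_0^r \frac{\bH(t)}{t^{2-\gamma_0}}\,dt=\int_0^r t^{\gamma_0-1}\varphi(t)\,dt.
\]
Integrating by parts (against $\tfrac{d}{dt}(t^{\gamma_0}/\gamma_0)$) gives
\[
\bF(r)=\frac{r^{\gamma_0-1}\bH(r)}{\gamma_0}-\frac{1}{\gamma_0}\int_0^r t^{\gamma_0}\varphi'(t)\,dt-\lim_{\varepsilon\downarrow0}\frac{\varepsilon^{\gamma_0}\varphi(\varepsilon)}{\gamma_0}.
\]
The boundary term at $0$ vanishes because \eqref{e:Ndecay} yields the pointwise bound $|\NN|\le C\bmo^{1/4}|z|^{1+\gamma_0/2}$, whence $\bH(t)\le C\,t^{3+\gamma_0}$ and $\varepsilon^{\gamma_0}\varphi(\varepsilon)\le C\varepsilon^{2+2\gamma_0}\to0$.

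\medskip

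\noindent\textbf{Step 2 (Use Lemma \ref{l:H'}).}
By Lemma \ref{l:H'} one has $\bH'(t)=\bH(t)/t+2\bE(t)$, so
\[
\varphi'(t)=\frac{\bH'(t)}{t}-\frac{\bH(t)}{t^2}=\frac{2\bE(t)}{t}.
\]
Substituting this into the identity above,
\[
\bF(r)=\frac{r^{\gamma_0-1}\bH(r)}{\gamma_0}-\frac{2}{\gamma_0}\int_0^r t^{\gamma_0-1}\bE(t)\,dt.
\]

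\medskip

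\noindent\textbf{Step 3 (Cauchy--Schwarz on $\bE$ and on the integral).}
Recalling the definition of $\bE(t)$ in \eqref{e:E} and that $\bD'(t)=\int_{\partial B_t}|D\NN|^2\ge\int_{\partial B_t}|D_\nu\NN|^2$, Cauchy--Schwarz on $\partial B_t$ gives
\[
|\bE(t)|\le\Big(\int_{\partial B_t}|\NN|^2\Big)^{1/2}\Big(\int_{\partial B_t}|D_\nu\NN|^2\Big)^{1/2}\le\bH(t)^{1/2}\bD'(t)^{1/2}.
\]
Applying Cauchy--Schwarz to the integral in $t$ and using that $t^{\gamma_0}\le r^{\gamma_0}$ on $(0,r)$,
\[
\int_0^r t^{\gamma_0-1}|\bE(t)|\,dt\le\Big(\int_0^r\frac{\bH(t)}{t^{2-\gamma_0}}\,dt\Big)^{1/2}\Big(\int_0^r t^{\gamma_0}\bD'(t)\,dt\Big)^{1/2}\le\bF(r)^{1/2}\,r^{\gamma_0/2}\bD(r)^{1/2}.
\]

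\medskip

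\noindent\textbf{Step 4 (Absorption via Young).}
Combining Steps 2 and 3,
\[
\bF(r)\le\frac{r^{\gamma_0-1}\bH(r)}{\gamma_0}+\frac{2}{\gamma_0}\bF(r)^{1/2}\bigl(r^{\gamma_0}\bD(r)\bigr)^{1/2}.
\]
Young's inequality $2AB\le A^2/2+2B^2$ applied to the last term allows to absorb $\bF(r)/2$ to the left-hand side, yielding
\[
\bF(r)\le C_{\ref{l:F}}\,r^{\gamma_0-1}\bH(r)+C_{\ref{l:F}}\,r^{\gamma_0}\bD(r)
\]
with $C_{\ref{l:F}}$ depending only on $\gamma_0$.

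\medskip

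The only delicate point is justifying the vanishing of the boundary term at $0$, and this is immediate from the sup-norm decay \eqref{e:Ndecay}. The rest is a straightforward integration by parts plus Cauchy--Schwarz; the main trick (and really the only idea) is to notice that the clumsy derivative of $\varphi=\bH(t)/t$ simplifies via Lemma \ref{l:H'} to a multiple of $\bE(t)/t$, for which the $\bH$--$\bD'$ Cauchy--Schwarz is naturally available.
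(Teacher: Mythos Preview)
Your proof is correct and follows essentially the same route as the paper: integrate by parts in $\bF$ using Lemma \ref{l:H'} to replace $(\bH(t)/t)'$ by $2\bE(t)/t$, then control $\bE$ via Cauchy--Schwarz and absorb the resulting $\bF$ term. The only cosmetic difference is that the paper applies Young's inequality pointwise to $\bE(t)$ (obtaining $|\bE(t)|\le\eps\,\bH(t)/t+t\bD'(t)/(4\eps)$ and then choosing $\eps$ small) rather than your Cauchy--Schwarz on the $t$-integral followed by Young; both lead to the same absorption.
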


\begin{proof}
Using \eqref{e:H'} and an integration by parts we infer that
\begin{align}\label{e:F1}
\gamma_0\int_0^r\frac{\bH(\rho)}{\rho^{2-\gamma_0}}\,d\rho
&= 
\frac{\bH(\rho)}{\rho^{1-\gamma_0}}\Big\vert_0^r-\int_0^r\frac{d}{d\rho}\left(\frac{\bH(\rho)}{\rho}\right)\,\rho^{
\gamma_0}\,d\rho
=\frac{\bH(r)}{r^{1-\gamma_0}}-\int_0^r\frac{2\,\bE(\rho)}{\rho^{1-\gamma_0}}\,d\rho.
\end{align}
The Cauchy--Schwarz inequality yields then the following bound for every $\eps$:
\begin{align}\label{e:CS per E}
|\bE(r)| &\leq  \frac{\eps}{r} \int_{\de B_r} |\NN|^2  + 
\frac{r}{4\eps} \int_{\de B_r} |D\NN|^2
= \eps\,\frac{\bH(r)}{r} + \frac{r\,\bD'(r)}{4\eps}.
\end{align}
Therefore, by choosing $\eps =\sfrac{\gamma_0}{2}$, we deduce
\eqref{e:F} from \eqref{e:F1} and \eqref{e:CS per E}.
\end{proof}

\begin{proof}[Proof of Theorem~\ref{t:poincare}]
In view of Lemma~\ref{l:F},
for $r$ sufficiently small, the almost minimizing condition \eqref{e:AM1 bis}
reads as
\[
 \bD(r)\leq C\,r\,\bD'(r)+C\,\frac{\bH(r)}{r^{1-\gamma_0}}
+C\,\bmo^{\sfrac12}\,r^{\gamma_0}\,\int_{\de B_r} |\etaa\circ \NN|\,.
\]
Dividing by the radius and integrating we get
\begin{align}\label{e:AM1 tris}
\int_0^r \frac{\bD(s)}{s}\,ds 
&\leq C \, \int_0^r \left( \bD'(\rho) +\,\frac{\bH(\rho)}{\rho^{2-\gamma_0}}+ \, \rho^{\gamma_0-1}\,\int_{\de B_\rho} |\etaa\circ \NN|\right)\,d\rho \notag\\
&\stackrel{\eqref{e:F}}{\leq}  C \,\bD(r) +C\, \bF (r) + C\,\bmo^{\sfrac12}\int_{B_r} \frac{|\etaa\circ \NN|}{|z|^{1-\gamma_0}} \notag\\
&\stackrel{\eqref{e:media_pesata}}{\leq} C\, \bD(r)+C\,(\bLambda^{\eta_0}(r)\, \bD(r) +\bF(r))\leq C\, \bD(r) + C\, r^{\gamma_0-1} \bH (r)\, .
\end{align}
Therefore, using Lemma~\ref{l:H'} we deduce that
\begin{align*}
\frac{\bH(r)}{r} & = \int_0^r \frac{2\,\bE(\rho)}{\rho}\, dt
\stackrel{\eqref{e:OV}}{\leq} C\,\int_0^r \frac{\bD(\rho)}{\rho}\, d\rho  \notag\\
&\qquad +C\,\int_0^r \left( \frac{\bH(\rho)}{\rho^{2-2\gamma_0}}+ \rho^{\gamma_0} \,\bD'(\rho)+ \rho^{\gamma_0} \frac{d}{d\rho}\|T-\bT_F\|(\p^{-1}(\Psii(B_\rho))) \right) \, d\rho\notag\\
& \stackrel{\eqref{e:AM1 tris}}{\leq} C\,\bD(r) + C\,\frac{\bH(r)}{r^{1-\gamma_0}}+C\,r^{\gamma_0}\bD(r)+C\,\bF(r)+C\,r^{\gamma_0}\|T-\bT_F\|(\p^{-1}(\Psii(B_r))) \notag\\
&\stackrel{\eqref{e:diff masse}\&\eqref{e:F}}{\leq} C\,\bD(r) + C\,\frac{\bH(r)}{r^{1-\gamma_0}}\, .
\end{align*}
For $r$ sufficiently small this concludes the proof. 
\end{proof}

\section{Inner variations and key estimates}\label{s:refined}

Using the Poincar\'e inequality in Theorem~\ref{t:poincare},
we can give very simple estimates of the error terms in the ``inner
variations'' of the current $T$. The latter corresponds to deformations of $T$ along appropriate vector fields which are tangent to $\cM$. In order to state our main conclusion we need to introduce yet another quantity
\begin{equation}\label{e:G} 
\bG (r) :=  \int_{\de B_r} \left|D_\nu \NN\right|^2\, .
\end{equation}

\begin{proposition}[Inner Variations]\label{p:IV}
There exist constants $C_{\ref{p:IV}}>0$ and $\eta>0$ such that,
 if $r>0$ is small enough, than the following holds
\begin{equation}\label{e:inn}
\left|\bD' (r)  - 2\,\bG (r)\right| \leq C\, \mathcal{E}_{IV}(r)\,,  
\end{equation}
where
\begin{align}\label{e:stima_Eiv_1}
\mathcal{E}_{IV}(r) 
&= r^{2\eta-1} \bD(r)+
\bD(r)^{\eta}\,\bD'(r)+\frac{\bmo^{\sfrac12}}{r^{1-\gamma_0}} \int_{\de B_r}|\etaa\circ 
\NN(z,w)|\notag\\
&+\frac{d}{dr}\|T-\bT_F\|(\p^{-1}(\Psii(B_r)))\,.
\end{align}
\end{proposition}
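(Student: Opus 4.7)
The strategy mirrors that of Proposition~\ref{p:OV}, but with an \emph{inner} (i.e., tangential to $\cM$) variation of $\bT_F$ in place of the outer (normal) one, the goal being now to identify the leading-order contribution to $\delta \bT_F(X)$ with $\bD'(r) - 2\bG(r)$.

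First I fix a smooth compactly supported cutoff $\phi$ on $\R$ as in Section~\ref{ss:OV} and consider the radial vector field on $\gira$ given in the standard flat coordinates by $Y(z,w) := \phi(|z|/r)\, z$. Push $Y$ forward to a tangent vector field $Y_\cM := \Psii_{*} Y$ on $\cM\setminus\{0\}$, and extend $Y_\cM$ to an ambient vector field $X$ on $\R^{2+n}$ supported in a small tubular neighborhood of $\cM$. In cases (a) and (c) of Definition~\ref{d:semicalibrated}, further project $X$ pointwise onto $T\Sigma$; since $Y_\cM$ is already tangent to $\cM\subset\Sigma$ at every point of $\cM$, the correction has magnitude $O(\bc\,|X|\,|p-\P(p)|)$, contributing an error integrated against $\|\bT_F\|$ of size $\bmo^{\sfrac12}\bH(r)/r$, which is absorbable in $\bD(r)$ via the Poincar\'e inequality (Theorem~\ref{t:poincare}).

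Next I expand $\delta\bT_F(X)$ by the Taylor formula for first variations of $\bT_F$ along vector fields tangent to $\cM$, cf.\ \cite[Theorem~4.2]{DS2}, applied on dyadic annuli of $\cM$ as in the proof of Proposition~\ref{p:amin}. Since $Y_\cM$ is tangent to $\cM$, the principal term is the multi-valued inner variation of the Dirichlet energy of $N$ on $\cM$; using the conformality of $\Psii$ together with the bounds $|\lambda - 1|\leq C\bmo^{\sfrac12}|z|^{2\alpha}$ and $|A_\cM|(\Psii(z,w))\leq C\bmo^{\sfrac12}|z|^{\gamma_0-1}$, this principal term reduces, modulo curvature remainders, to the flat inner-variation integrand on $\gira$:
\begin{equation*}
\int_\gira \Big(|D\NN|^2\,\dv Y \;-\; 2\sum_{j=1}^Q \bigl\langle D\NN_j,\, D\NN_j\cdot DY\bigr\rangle\Big)\,dz\,.
\end{equation*}
A direct computation using $\dv Y = 2\phi(|z|/r) + \phi'(|z|/r)|z|/r$ and $(DY)_{ij}=\phi(|z|/r)\delta_{ij}+\phi'(|z|/r)\,z_iz_j/(r|z|)$ collapses the integrand to $\phi'(|z|/r)|z|/r\,(|D\NN|^2-2|D_\nu\NN|^2)$. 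Letting $\phi$ increase monotonically to $\chi_{(-1,1)}$, this integral converges to $-\bD'(r) + 2\bG(r)$.

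Combining with $\delta T(X)=0$ in cases (a) and (c), respectively $\delta T(X)=T(d\omega\res X)$ in case (b), yields a bound for $|\bD'(r)-2\bG(r)|$ by the sum of four error terms, each handled exactly as in Proposition~\ref{p:OV}: (i) the curvature errors $\bmo^{\sfrac12}\int_{B_r}|z|^{\gamma_0-1}(|D\NN|^2+|\NN||D\NN|)$ are controlled via Poincar\'e by $r^{2\eta-1}\bD(r)$ for $\eta<\gamma_0/2$; (ii) the higher-order contributions $|D\NN|^3,|D\NN|^4$ use the bound $\Lip(\NN|_{B_r})\leq C\bLambda^{\eta_0}(r)\leq C\bD^{\eta_0}(r)$, valid after Poincar\'e and Lemma~\ref{l:F}, yielding the $\bD^\eta(r)\bD'(r)$ term; (iii) the linear-in-$\etaa\circ\NN$ term $\int\varphi\,\langle H_\cM,\etaa\circ\NN\rangle$, restricted to the support of $\phi'$, produces the third term of $\mathcal{E}_{IV}$; and (iv) the mass-defect $|\delta T(X)-\delta\bT_F(X)|$ is handled by a monotone piecewise-linear cutoff approximation as at the end of Section~\ref{ss:OV}, producing $\frac{d}{dr}\|T-\bT_F\|(\p^{-1}(\Psii(B_r)))$ in the limit. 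The main obstacle is the accurate identification of the leading-order term as precisely $\bD'(r)-2\bG(r)$: this hinges on the conformality of $\Psii$ to transfer the inner variation on $\cM$ to the transparent radial computation on $\gira$ with only absorbable curvature corrections; in case (b) the additional term $T(d\omega\res X)-\bT_F(d\omega\res X)$ contributes at most $\bmo^{\sfrac12}\bD(r)$, easily absorbed into $r^{2\eta-1}\bD(r)$.
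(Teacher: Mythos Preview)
Your plan is essentially the paper's: test the first variation of $T$ and $\bT_F$ along a radial vector field tangent to $\cM$, expand $\delta\bT_F$ via the Taylor formula to isolate the inner variation of the Dirichlet energy, transfer this to $\gira$, and collect errors. One point, however, needs sharpening.

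The reduction of the inner variation of the Dirichlet energy from $\cM$ to $\gira$ is not ``modulo curvature remainders'': it is an \emph{exact} identity, by conformal invariance of the Dirichlet energy in dimension two. The paper uses this directly: since $\Psii$ is conformal, $\int_\cM |D(N\circ\Gamma_\eps)|^2 = \int_\gira |D(\NN\circ\hat\Gamma_\eps)|^2$ for every $\eps$, where $\hat\Gamma_\eps$ is the flow on $\gira$ generated by $\hat Y(z,w)=\frac{|z|}{r}\phi(|z|/r)\,\nu$; differentiating at $\eps=0$ yields the flat integrand $\frac{|z|}{r^2}\phi'(|z|/r)\bigl(2|D_\nu\NN|^2-|D\NN|^2\bigr)$ with no correction whatsoever. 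The bounds on $|\lambda-1|$ and $|A_\cM|$ enter only in the Taylor expansion of $\delta\bT_F(X)$ about the Dirichlet inner variation (the paper's ${\rm Err}^i_1$--${\rm Err}^i_3$), not in this transfer.

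This matters because the ``curvature error'' you list in (i), namely $\bmo^{\sfrac12}\int_{B_r}|z|^{\gamma_0-1}|D\NN|^2$, does not arise in the paper's proof, and your claimed bound for it is not justified: controlling it by $r^{2\eta-1}\bD(r)$ amounts to $\bmo^{\sfrac12}\int_0^r s^{\gamma_0-1}\bD'(s)\,ds \leq r^{2\eta-1}\bD(r)$, which without further decay information on $\bD'$ requires a \emph{lower} bound on $\bD(r)$ that is unavailable at this point of the argument (Theorem~\ref{t:decay} has not yet been proved). Once you use the exact conformal identity, this term simply does not appear and the remaining error bookkeeping matches the paper's.
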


For further use we summarize in the next lemma a set of inequalities which will be used in the next sections and which are direct consequences
of all the conclusions derived so far

\begin{lemma}\label{l:tutte le stime}
There exist constant $C_{\ref{l:tutte le stime}}>0$ and $\eta>0$ such that for every $r$ sufficiently small the following holds:
\begin{align}
\bF(r) + r \bF' (r) \leq & C_{\ref{l:tutte le stime}}\,r^{\gamma_0}\bD(r)\label{e:F finale}\\
|\bL(r)|\leq & C_{\ref{l:tutte le stime}} \,r\,\bD(r)\label{e:L finale}\\
 |\bL'(r)| \leq & C_{\ref{l:tutte le stime}}\,\left(\bH(r)\,\bD'(r)\right)^{\sfrac12}\label{e:L' finale}\\
\mathcal{E}_{OV} \leq &C_{\ref{l:tutte le stime}}\, \bD^{1+\eta} (r) + C_{\ref{l:tutte le 
stime}}\,\bF(r)+C_{\ref{l:tutte le stime}} r \bD^\eta (r) \bD' (r) + C_{\ref{l:tutte le stime}}\, r\,\mathcal{E}_{BP}(r),\label{e:OV finale}\\
\mathcal{E}_{IV}(r)\leq & C_{\ref{l:tutte le stime}}\, r^{2\eta-1} \bD(r)+C_{\ref{l:tutte le 
stime}}\,\bD(r)^{\eta}\,\bD'(r)+C_{\ref{l:tutte le stime}}\,\mathcal{E}_{BP}(r),\label{e:IV finale}
\end{align}
where 
\[
\mathcal{E}_{BP}(r):=\frac{\bmo^{\sfrac12}}{r^{1-\gamma_0}} \int_{\de B_r}|\etaa\circ 
\NN|+\frac{d}{dr}\|T-\bT_F\|(\p^{-1}(\Psii(B_r)))
\]
Moreover, for every $a>0$ there exist constants $b_0(a),C(a)>0$ such that
\begin{equation}\label{e:AM finale 1}
\bD(r) \leq \frac{r\,\bD'(r)}{2(2\,a+b)} + \frac{a(a+b)\,\bH(r)}{r(2\,a+b)} +C(a)\,r\, \mathcal{E}_{IV}(r)
\quad \forall \; b<b_0(a).
\end{equation}
\end{lemma}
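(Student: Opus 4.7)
I would prove the six estimates in the order stated, since each later one builds on the earlier. Throughout I use that Theorem~\ref{t:poincare} gives $\bH(s)\leq Cs\bD(s)$, that $\bD$ is monotone non-decreasing in $r$, and that Theorem~\ref{t:cm}(iii) combined with Poincar\'e yields an a priori decay of the form $\bD(r)\leq Cr^{1+\gamma_0}$.

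\emph{Estimates \eqref{e:F finale}--\eqref{e:IV finale}.} For \eqref{e:F finale}, insert $\bH(s)\leq Cs\bD(s)\leq Cs\bD(r)$ into the definition of $\bF$ and integrate to obtain $\bF(r)\leq Cr^{\gamma_0}\bD(r)$; the bound on $r\bF'(r)=\bH(r)/r^{1-\gamma_0}$ is immediate from the same Poincar\'e inequality. For \eqref{e:L finale}, the definition of $\bL$ gives $|\bL(r)|\leq C\bmo^{\sfrac12}\int_{B_r}|D\NN||\NN|$; Cauchy--Schwarz together with coarea and Poincar\'e produces $\int_{B_r}|\NN|^2=\int_0^r\bH(s)\,ds\leq Cr^2\bD(r)$, hence $\int_{B_r}|D\NN||\NN|\leq Cr\bD(r)$. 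For \eqref{e:L' finale}, the coarea formula expresses $\bL'(r)$ as a boundary integral of the same integrand, and Cauchy--Schwarz on $\partial B_r$ yields $(\bH(r)\bD'(r))^{\sfrac12}$. For \eqref{e:OV finale}, \eqref{e:F finale} gives $\bLambda\leq C\bD$, whence $\bLambda^{\kappa}\leq C\bD^{\kappa}$, while Poincar\'e yields $\bH/r\leq C\bD$; therefore $\bLambda^{\kappa}(\bD+\bH/r+r\bD')\leq C\bD^{1+\kappa}+Cr\bD^{\kappa}\bD'$, the $\bF$ term in $\mathcal{E}_{OV}$ is retained as is, and the final summand is bounded by $r\mathcal{E}_{BP}$ since $r^{\gamma_0}\leq 1$. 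Estimate \eqref{e:IV finale} is then the identification of the last two summands in \eqref{e:stima_Eiv_1} with $\mathcal{E}_{BP}$.

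\emph{Estimate \eqref{e:AM finale 1}.} Fix $a>0$ and take $b<b_0(a)$ with $b_0(a)$ from Corollary~\ref{c:amin_ineq}. My starting point is \eqref{e:AM2 bis}, which I would invoke with parameter $b/2$ in place of $b$: this creates a strictly positive margin of order $br\bD'(r)/[(2a+b/2)(2a+b)]$ between the main coefficient $\frac{r}{2(2a+b/2)}$ and the target coefficient $\frac{r}{2(2a+b)}$. Using the orthogonal split $|D\NN|^2=|D_\nu\NN|^2+|D_\tau\NN|^2$ and \eqref{e:inn},
\[
\int_{\partial B_r}|D_\tau\NN|^2=\bD'(r)-\bG(r)=\frac{\bD'(r)}{2}+O(\mathcal{E}_{IV}(r)),
\]
so the main term becomes $\frac{r\bD'(r)}{2(2a+b/2)}$ modulo an error of order $\frac{r\mathcal{E}_{IV}}{2a+b/2}$. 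The margin above now absorbs both the $(1+Cr)$ multiplicative correction, which produces $Cr^2\bD'(r)/(2a+b/2)$, and the extra summand $Cr^{1+\sigma}\bD'(r)$ from \eqref{e:AM2 bis}, provided $r$ is small depending on $a$ and $b$. The $C\mathcal{E}_{QM}$ piece is controlled by $Cr\mathcal{E}_{IV}$: indeed, using \eqref{e:F finale}, Poincar\'e and the a priori bound $\bD\leq Cr^{1+\gamma_0}$, each of $\bF$, $\bH$ and $\bLambda^{\eta_0}\bD$ is bounded by $Cr^{2\eta}\bD$ for $\eta$ small enough (depending on $\gamma_0$ and $\eta_0$), while $\bmo^{\sfrac12}r^{\gamma_0}\int_{\partial B_r}|\etaa\circ\NN|$ is exactly the boundary part of $r\mathcal{E}_{BP}$. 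Collecting everything yields \eqref{e:AM finale 1}.

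\textbf{Main obstacle.} The delicate point is \eqref{e:AM finale 1}: a naive substitution of the inner variation into \eqref{e:AM2 bis} leaves a residual $Cr^{1+\sigma}\bD'(r)$ that cannot in general be absorbed into $Cr\mathcal{E}_{IV}(r)\supset Cr\bD^{\eta}\bD'(r)$, since this would require a lower bound on $\bD$ that is not available. The resolution is the ``buffer trick'' of applying \eqref{e:AM2 bis} with $b/2$ in place of $b$, trading a slightly larger coefficient for a positive margin of size $\asymp br\bD'(r)$ that, for $r$ small, dominates $Cr^{1+\sigma}\bD'(r)$ and the $(1+Cr)$ correction simultaneously.
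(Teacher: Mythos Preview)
Your proof is correct. For \eqref{e:F finale}--\eqref{e:IV finale} you follow essentially the same route as the paper: insert Poincar\'e into the definitions of $\bF$ and $\bL$, and use $\bLambda\leq C\bD$ to simplify the expressions \eqref{e:stima_Eov_1}, \eqref{e:stima_Eiv_1}. (One small imprecision: the a priori decay you quote as $\bD(r)\leq Cr^{1+\gamma_0}$ is not quite what \eqref{e:Ndecay} and \eqref{e:Lip_N} give---the Lipschitz bound yields $\bD(r)\leq Cr^{2+2\eta_0}$---but any polynomial bound $\bD(r)\leq Cr^{\alpha}$ with $\alpha>0$ suffices for your purposes.)

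For \eqref{e:AM finale 1} your ``buffer trick'' with $b/2$ is a genuinely different argument from the paper's. The paper applies \eqref{e:AM2 bis} directly with the target parameter $b$, then absorbs the residual $Cr^{1+\sigma}\bD'(r)$ into the leading term by inflating the prefactor $(1+Cr)$ to $(1+C'r^{\sigma})$ (with $C'$ depending on $a$); dividing both sides by this prefactor turns the extra factor into a harmless additive $C'r^{\sigma}\bD(r)$, which \emph{is} absorbable into $r\mathcal{E}_{IV}(r)$ through the summand $r^{2\eta}\bD(r)$ once $\eta$ is chosen with $2\eta\leq\sigma$. So the obstacle you identify---that $r^{1+\sigma}\bD'$ cannot be compared directly to $r\bD^{\eta}\bD'$---is real, but the paper resolves it by converting the dangerous $\bD'$ term into a $\bD$ term rather than by creating a margin in the main coefficient. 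Your approach requires additionally checking that applying \eqref{e:AM2 bis} with $b/2$ gives a \emph{smaller} $\bH$-coefficient than the target (it does, since $t\mapsto\frac{a+t}{2a+t}$ is increasing), and that ``$r$ small'' depends on $b$; the paper's approach avoids this bookkeeping at the cost of a slightly less transparent algebraic step.
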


An important corollary of the previous lemma is the following

\begin{corollary}\label{c:integrability}
There exists a constant $C_{\ref{c:integrability}}>0$ such that, if $\eta$ is the constant of Lemma \ref{l:tutte le stime},
then for every $0 \leq \gamma<\eta$ and $r$ sufficiently small, the nonnegative functions $\frac{\mathcal{E}_{IV}(r)}{r^\gamma\,\bD(r)}$
$\frac{\mathcal{E}_{OV}(r)}{r^{1+\gamma}\bD(r)}$ are both integrable. Moreover, if we define the functions
\begin{align}
\bSigma_{IV} (r) := & \int_0^r \frac{\mathcal{E}_{IV}(s)}{s^\gamma\,\bD(s)}\, ds\, ,\label{e:def_SIV}\\
\bSigma_{OV} (r) := & \int_0^r \frac{\mathcal{E}_{OV}(s)}{s^\gamma\,\bD(s)}\, ds\, ,\label{e:def_SOV}\\
\bSigma (r) := & \bSigma_{IV} (r) + \bSigma_{OV} (r)\, ,
\end{align}
then
\begin{equation}\label{e:bSigma}
\bSigma(r) \leq C_{\ref{c:integrability}}\,r^{\eta-\gamma}\,.
\end{equation}
\end{corollary}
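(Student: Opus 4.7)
The plan is to invoke the decompositions \eqref{e:IV finale} and \eqref{e:OV finale} of $\mathcal{E}_{IV}$ and $\mathcal{E}_{OV}$, and bound the integral of each summand individually. The a priori tools I will use are: the pointwise upper bound $\bD(s)\le Cs^{2+2\eta_0}$, immediate from \eqref{e:Lip_N}; the weighted bounds \eqref{e:F finale} for $\bF$ and $\bF'$; the integral bounds \eqref{e:media_pesata} and \eqref{e:diff masse}; and several integrations by parts exploiting total-derivative structures. After possibly shrinking the $\eta$ of Lemma \ref{l:tutte le stime} we may assume $\eta\le\min\{(2+2\eta_0)\eta_0,\gamma_0,1/2\}$, which is harmless since all bounds I will derive are of the form $Cr^{\eta'-\gamma}\le Cr^{\eta-\gamma}$ for exponents $\eta'\ge\eta$.

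For the summands of $\mathcal{E}_{IV}/(r^\gamma\bD)$ and $\mathcal{E}_{OV}/(r^{1+\gamma}\bD)$ that do not involve $\mathcal{E}_{BP}$, I proceed term by term. The piece $r^{2\eta-\gamma-1}$ is directly integrable. The piece $r^{-\gamma}\bD^{\eta-1}\bD'=\eta^{-1}r^{-\gamma}(\bD^\eta)'$ is handled by integration by parts, using $\bD^\eta(s)\le Cs^{(2+2\eta_0)\eta}$ to control both boundary and bulk contributions. The OV term $\bD^\eta(r)/r^{1+\gamma}$ is bounded similarly, while the term $\bF/(r^{1+\gamma}\bD)$ is at most $Cr^{\gamma_0-1-\gamma}$ by \eqref{e:F finale}, hence integrable since $\gamma<\gamma_0$.

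The crucial contribution is $\int_0^r\mathcal{E}_{BP}(s)/(s^\gamma\bD(s))\,ds$. I split $\mathcal{E}_{BP}=\alpha+M'$, where $\alpha(s):=\bmo^{1/2}s^{\gamma_0-1}\int_{\partial B_s}|\etaa\circ\NN|$ and $M(s):=\|T-\bT_F\|(\p^{-1}(\Psii(B_s)))$, both nonnegative with $M$ monotone. Setting $A(s):=\int_0^s\alpha(t)\,dt=\bmo^{1/2}\int_{B_s}|z|^{\gamma_0-1}|\etaa\circ\NN|$, the bounds \eqref{e:media_pesata} and \eqref{e:diff masse}, combined with $\bLambda(s)\le C\bD(s)\le Cs^{2+2\eta_0}$, $\bmo^{1/2}\le\bmo^{\eta_0}$ and \eqref{e:F finale}, yield
\[
A(s)+M(s)\le C\bD^{1+\eta_0}(s)+C\bF(s)\le Cs^{\eta'}\bD(s),\qquad \eta':=\min\{(2+2\eta_0)\eta_0,\gamma_0\}.
\]

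The main obstacle is that, upon Stieltjes integration by parts, the integrand involves $\bD$ squared in the denominator, which would prima facie require a lower bound on $\bD$. The resolution is to use the full structural bound on $X\in\{A,M\}$ rather than just $X\le Cs^{\eta'}\bD$: integration by parts gives
\[
\int_0^r\frac{dX(s)}{s^\gamma\bD(s)}=\frac{X(r)}{r^\gamma\bD(r)}+\gamma\int_0^r\frac{X(s)}{s^{\gamma+1}\bD(s)}\,ds+\int_0^r\frac{X(s)\bD'(s)}{s^\gamma\bD(s)^2}\,ds,
\]
with vanishing boundary at $s=0$ because $X/(s^\gamma\bD)\le Cs^{\eta'-\gamma}\to 0$. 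The first two summands are immediately $\le Cr^{\eta'-\gamma}$. In the third, the component corresponding to $X\le C\bD^{1+\eta_0}$ becomes $\eta_0^{-1}\int_0^r(\bD^{\eta_0})'/s^\gamma\,ds$, again controlled by integration by parts and the a priori bound on $\bD$; the component corresponding to $X\le C\bF$ is handled by writing $\bD'/\bD^2=-(1/\bD)'$ and integrating by parts with $u=\bF/s^\gamma$, producing a boundary term $\bF(r)/(r^\gamma\bD(r))\le Cr^{\gamma_0-\gamma}$ and a residual integrand $[\bF'/s^\gamma-\gamma\bF/s^{\gamma+1}]/\bD$ bounded by $Cs^{\gamma_0-\gamma-1}$ via \eqref{e:F finale}. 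Summing all contributions yields $\bSigma(r)\le Cr^{\eta-\gamma}$, which in particular gives the claimed integrability.
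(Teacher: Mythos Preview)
The proposal is correct and follows essentially the same approach as the paper's proof: both use the decompositions \eqref{e:IV finale} and \eqref{e:OV finale}, handle the elementary summands directly via the a~priori decay of $\bD$, and treat the $\mathcal{E}_{BP}$ contribution by integration by parts together with the bounds \eqref{e:media_pesata}, \eqref{e:diff masse}, and \eqref{e:F finale}. Your write-up is in fact slightly more explicit than the paper's (you cleanly separate the $\bD^{1+\eta_0}$ and $\bF$ components of the $\bD'/\bD^2$ integral, and you keep careful track of the boundary term at $s=0$), but the underlying argument is identical.
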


\subsection{Proof of Proposition \ref{p:IV}} We evaluate the first variation of $T$ along a suitably defined vector field $X$. To this aim we fix a function $\phi\in C^\infty_c (]-1,1[)$, symmetric, nonnegative and identically one on $]-1+1/j, 1-1/j[$ and with the property that $|\phi'|\leq C j$. Then we introduce the 
vector field $Y\colon \cM \to \R^{n+2}$ defined, for every $(z,w)\in \gira\setminus \{0\}$, by
\[
Y(\Psii(z,w)) := \textstyle{\frac{|z|}{r}}\,\phi(\textstyle{\frac{|z|}{r}}) \, D_\nu \Psii(z,w) \in T_{\Psii(z,w)} \cM\,,
\]
and extended to be $0$ at the origin.

Next we define the vector field $X_{i}\colon \bV_{a,u} \to \R^{n+2}$ by $X_i (p):= Y(\p(p))$. 
Note that $X_i$ is the infinitesimal generator of a one parameter family of diffeomorphisms $\Phi_\eps$ defined as
$\Phi_\eps (p):= \Gamma_\eps (\p (p)) + p - \p (p)$, where 
$\Gamma_\eps$ is the one-parameter family of biLipschitz homeomorphisms of $\cM$ generated by $Y$. In fact, since $\Gamma_\varepsilon$ fixes the origin, we can consider it as a $C^{2, \gamma_0}$ map of $\cM \setminus \{0\}$ onto itself. 
Note moreover that $X_i$ is Lipschitz on the entire $\gira$. 

Observe that, by Lemma \ref{l:F} and the Poincar\'e inequality, $\bF(r)\leq C\, r^{\gamma_0}\,\bD(r)$, so that $\bLambda(r)\leq C \,\bD(r)$. Moreover, 
\begin{equation}\label{e:DY_bound}
 |D_{\cM} Y|(\Psii(z,w))+|\dv_\cM\,  Y|(\Psii(z,w))\leq - C r^{-2} |z|\,\phi'(\textstyle{\frac{|z|}{r}})+ Cr^{-1} \,\phi(\textstyle{\frac{|z|}{r}}) \,,
\end{equation}
where we recall that $\phi'\leq 0$ on $[0,1]$. 

If $r$ is small enough, by \eqref{e:Lip_N} we can argue as in \cite[Section~3.3]{DS5} and deduce that
\[
 \frac{1}{2}\left|\int_{\cM}\left( |D{N}|^2 \,{\rm div}_{\cM}Y-2\sum_{i=1}^{Q} \langle D{N}_i\colon 
\left(D{N}_i\cdot D_{\cM}Y\right)\rangle\right)\right| \leq  \sum_{k=1}^5 \textup{Err}^i_k\,,
\]
where the error terms can be bounded in the following manner.

First of all,
\begin{align*}
{\rm Err}_1^i &=  Q \left| \int_\cM \big(\langle H_\cM, \etaa\circ {N}\rangle\,\dv_\cM Y+\langle 
D_Y H_\cM,\etaa\circ {N}\rangle \big) \right| \\
&\leq Cr^{-1} \,\bmo^{\sfrac12}\, \int_{\gira} \left(\phi \big( \textstyle{\frac{|z|}{r}}\big)\,|z|^{\gamma_0-1}\,|\etaa\circ 
\NN(z,w)| - \phi' \big( \textstyle{\frac{|z|}{r}}\big)\, |z|^{\gamma_0-1}\,|\etaa\circ 
\NN(z,w)|\right)\\
&\stackrel{\eqref{e:media_pesata}}{\leq} Cr^{-1}\, \bD^{1+\eta}(r) - C\,\bmo^{\sfrac12}\,r^{\gamma_0-1} \int_{B_r} r^{-1} \phi'  \big( \textstyle{\frac{|z|}{r}}\big)\,|\etaa\circ 
\NN(z,w)|\,,
\end{align*}
where in the first inequality we used \eqref{e:DY_bound} and the fact that
\[
\langle D_YH_\cM,\etaa\circ {N}\rangle\leq |Y|\,|D H_{\cM}|\,|\etaa\circ N|\leq C\, \textstyle{\frac{|z|}{r}} \phi ( \textstyle{\frac{|z|}{r}})\,|z|^{\gamma_0-2}\,|\etaa\circ \NN|\,.
\]

As for $\textup{Err}^i_2$ and $\textup{Err}_i^3$ we have
\begin{align*}
{\rm Err}_2^i 
&= C \int_\cM |A_{\cM}|^2\big(|DY|\,|N|^2+|Y|\,|N|\,|DN|\big)\\
&\leq C\, \bmo\, \int_{\gira} \left[r^{-1}\left(- \textstyle{\frac{|z|}{r}}\phi'(\big( \textstyle{\frac{|z|}{r}}\big))+ 
\phi \big( \textstyle{\frac{|z|}{r}}\big)\right) \frac{|\NN|^2}{|z|^{2-2\gamma_0}}+ {\textstyle{\frac{|z|}{r}}}\, \phi\big( {\textstyle{\frac{|z|}{r^2}}}\big)
\frac{|\NN|\,|D\NN|}{|z|^{2-2\gamma_0}}\right]\\
& \leq C \,\bmo\,r^{\gamma_0-1} \,\bD(r)- Cr^{-1} \, \int_{B_r}r^{-1} \phi' \big( {\textstyle{\frac{|z|}{r}}}\big) \,
\frac{|\NN|^2}{|z|^{1-\gamma_0}}\,,
\end{align*}
and
\begin{align*}
{\rm Err}_3^i 
&\leq  C \int_\cM \Big(|Y|\,|A_{\cM}|\,|DN|^2\big(|N| + |DN|\big) +
|DY| \big(|A_{\cM}| \,|DN|\, |N|^2 + |DN|^4\big)\Big)\\
&\leq C r^{\gamma_0-1} \bD(r)- C\,\bD(r)^{\eta}\,\int_{\gira} r^{-1}  \phi'\big( {\textstyle{\frac{|z|}{r}}}\big)\, |D\NN|^2
+C\,r^{-1}\,\bD(r)^{\eta}\,\int_{\gira}r^{-1}  \phi\big( {\textstyle{\frac{|z|}{r}}}\big)\, \frac{|\NN|^2}{|z|^{2-\gamma_0}}\, .
\end{align*}

The errors ${\rm Err}_4^i$ and ${\rm Err}_5^i$ are the same as ${\rm Err}_4^o$ and ${\rm Err}_5^o$ respectively, in Section \ref{ss:OV}, 
evaluated along a different vector field. Proceeding in the same way as in the estimate of ${\rm Err}_4^o$, we deduce
\begin{align*}
{\rm Err}_4^i
&=\int_{\supp (T)\setminus \im (F)}  \left|\dv_{\vec T} X_i\right|\, d\|T\|
+ \int_{\im (F)\setminus \supp (T)} \left|\dv_{\vec \bT_F} X_i \right|\, d\|\bT_F\|\\
&\leq C\, r^{\gamma_0-1}\,\bD(r)+ C \underbrace{\int \alpha\,d\|T-\bT_F\|}_{S(\phi)}\,.
\end{align*}
where $\alpha (p) = \varphi (\p (p))$ and $\varphi (\Psii (z,w)) =r^{-2} |z| \phi \big(r^{-1} |z|) - r^{-1} \phi' (r^{-1} |z|)$. In particular using 
\eqref{e:diff masse} and the fact that $- \phi' \leq C j$ on $[0,1]$, we infer
\[
S (\phi) \leq C r^{\gamma_0-1} \bD (r) + C \frac{j}{r} \|T - \bT_F\| (\p^{-1} (\Psii (B_r\setminus B_{r(1-1/j)}))\, .
\]
As for $\textup{Err}_i^5$, we observe that it only appears in the cases (a) and (c) and arguing as in Section \ref{ss:OV} we can bound it as
\[
\textup{Err}_i^5 \leq I_1 + \underbrace{\left|\int\langle X_i^\perp,h(\vec{\bT}_F(p))\rangle\,d\|\bT_F\|\right|}_{I_2}\, ,
\]
where $h (v_1\wedge v_2) := \sum_{i=1}^2 A_\Sigma (v_i, v_i)$ and $I_1$ enjoys the same bounds as $\textup{Err}_i^4$. 
Following the argument of \cite[Section 4.3]{DS5} we can see that $I_2$ enjoys the same bounds as $\textup{Err}_i^1$ and $\textup{Err}_i^2$.

\medskip

To conclude the proof notice that, with analogous computation as in \cite[Proposition 3.1]{DS1},
\begin{equation}\label{e:dir_var}
 \frac{d}{d\eps}\Big\vert_{\eps=0}\int_{\cM} |D(N\circ \Gamma_\eps)|^2=\int_{\cM}\left(2\sum_{i=1}^{Q} \langle 
DN_i\colon \left(DN_i\cdot D_{\cM}Y\right)\rangle-|DN|^2 \,{\rm div}_{\cM}Y \right)\,.
\end{equation}
However, by the conformal invariance of the Dirichlet energy, we have
\[
\int_{\cM} |D(N\circ \Gamma_\eps)|^2 = \int_\gira |D (\NN \circ \hat{\Gamma}_\eps)|^2\, ,
\]
where $\hat{\Gamma}_\eps$ is the one parameter family of diffeomorphisms generated by the vector field $\hat{Y}\colon 
\gira \to \gira$ defined by
\[
\hat{Y}(z,w):=  \frac{|z|}{r} \,\phi\left(\frac{|z|}{r}\right)\nu\,.
\]
Hence
\begin{equation}\label{e:dir_var1}
 \frac{d}{d\eps}\Big\vert_{\eps=0}\int_{\cM} |D(N\circ 
\Gamma_\eps)|^2= \int_{\gira}\left(2\sum_{i=1}^{Q} \langle D\NN\,_i\colon \left(D\NN\,_i\cdot 
D\hat{Y}\right)\rangle-|D\NN|^2 \,{\rm div}\, \hat{Y} \right)\, ,
\end{equation}
where the differentiation is taken with respect to the (local) flat structure of $\gira$. 

In particular we conclude
\begin{equation}\label{e:dir_var3} 
 \frac{d}{d\eps}\Big\vert_{\eps=0}\int_{\cM} |D(N\circ 
\Gamma_\eps)|^2 =  \int_{B_r} \frac{|z|}{r^2}\,\phi'\left(\frac{|z|}{r}\right) (2 |D_\nu \NN|^2 - |D\NN|^2)\,.
\end{equation}
Collecting together \eqref{e:dir_var}, \eqref{e:dir_var3} and the error estimates, and letting $\phi$ converge to the to the indicator function of $[-1,1]$
(namely letting $j\uparrow \infty$) we conclude the proof. 

\subsection{Proof of Lemma \ref{l:tutte le stime}} The lemma is a very simple corollary of the estimates proven so far.
\eqref{e:F finale} is a simple consequence of 
the Poincar\'e inequality \eqref{e:poincare} and of \eqref{e:F}.
Similarly, by Lemma~\ref{l:F}, we have that 
$\bLambda(r) \leq C\, \bD (r)$, and therefore \eqref{e:OV finale}
follows in view of \eqref{e:F finale}.
The same arguments hold for \eqref{e:IV finale}.
Next for \eqref{e:L finale} we can estimate as follows:
\begin{align}\label{e:L}
|\bL(r)| & \leq C\,\bmo^{\sfrac12}\,\int_{B_r} |\NN|\,|D\NN| \leq  C_{}\,\bmo^{\sfrac12}\,\left(\int_0^r \bH(t)\,dt\right)^{\frac12} 
\bD^{\frac12}(r)\notag \\
&\stackrel{\eqref{t:poincare}}{\leq} C_{}\,\bmo^{\sfrac12}\,\left(C_{\ref{t:poincare}}\int_0^r 
t\,\bD(t)\,dt\right)^{\frac12} \bD^{\frac12}(r)
\leq C\,\bmo^{\sfrac12}\,r\,\bD(r)\, .
\end{align}
Similarly
\begin{align}\label{e:L'}
|\bL'(r)| & \leq C\,\bmo^{\sfrac12}\,\int_{\de B_r} |\NN|\,|D\NN| \leq 
C\,\bmo^{\sfrac12}\,\left(\,\bD'(r)\,\bH(r)\right)^{\frac12}\,.
\end{align}
Finally, we notice that Proposition \ref{p:IV} implies
\[
\left|\frac{\bD'(r)}{2}- \int_{\de B_r} |D_\tau \NN|^2\right| \leq C\, 
\mathcal{E}_{IV}(r).
\]
Therefore, using the almost minimizing property in \eqref{e:AM2 bis} and the Poincar\'e
inequality we infer that
\begin{align*}
\bD(r) & \leq (1+C\,r)\left[\frac{r\,\bD'(r)}{2(2\,a+b)} + \frac{a(a+b)\,\bH(r)}{r(2\,a+b)}\right] +C(a)\,r\,\mathcal{E}_{IV}(r)+ \mathcal{E}_{QM}(r)+C\,r^{1+\sigma}\,\bD'(r)\,.
\end{align*}
Absorbing the error term $r^{1+\sigma}\,\bD'(r)$ and dividing by $(1+C\,r^\sigma)$ we get
\begin{align*}
\bD(r) & \leq \frac{r\,\bD'(r)}{2(2\,a+b)} + \frac{a(a+b)\,\bH(r)}{r(2\,a+b)} +C(a)\,r\,\mathcal{E}_{IV}(r)+ \mathcal{E}_{QM}(r)+C\,r^\sigma \bD(r)\, ,
\end{align*}
from which \eqref{e:AM finale 1} follows straightforwardly by noticing that $\mathcal{E}_{QM}(r)+r\,\bD(r)\leq C\, r\,\mathcal{E}_{IV}(r)$. 

\subsection{Proof of Corollary \ref{c:integrability}} Recall first thet $\eta<\gamma_0$. We start with $\mathcal{E}_{BP}(r)$. Notice that, using $\bH(t)\leq C \,t\, \bD(t)$ together with the definition of $\bF(r)$, we have
\[
\int_0^r\left(\frac{1}{t^{\gamma}\,\bD(t)}\right)' \bF(t)\,dt\leq C\,\frac{\bF(r)}{r^\gamma \,\bD(r)}+C\int_0^r \frac{1}{t^\gamma\,\bD(t)}\,\frac{\bH(t)}{t^{2-\gamma_0}}\,dt\leq C r^{\gamma_0-\gamma}
\]
Next, by a simple integration by parts and the fact that $\bD(r)\leq C r^2$, we deduce
\begin{align}
&\int_0^r\frac{1}{t^{\gamma}\bD(t)} 
\frac{d}{dt}\|T-\bT_F\|(\p^{-1}(\Psii(B_t)))\,dt=\frac{1}{r^{\gamma}\bD(r)} \|T-\bT_F\|(\p^{-1}(\Psii(B_r))) \notag\\
&\qquad\qquad+\int_0^r \left(\frac{1}{t^\gamma \bD(t)}\right)'\|T-\bT_F\|(\p^{-1}(\Psii(B_t)))\,dt\notag\\
\stackrel{\eqref{e:diff masse}}{\leq} & C\, \frac{\bD^{1+\eta}(r)+\bF(r)}{r^\gamma\,\bD(r)}+\int_0^r \left(\frac{1}{t^\gamma \bD(t)}\right)' \, \left(\bD(t)^{1+\eta}+\bF(t)\right)\,dt \leq C \,r^{\eta-\gamma}\,.
\end{align}
In a similar fashion we have
\begin{align}
\int_0^r\frac{\bmo^{\sfrac12}}{t^\gamma\,\bD(t)} 
&\int_{\de B_t}\frac{|\etaa\circ 
\NN(z,w)|}{t^{1-\gamma_0}}\,dt
\leq\frac{\bmo^{\sfrac12}}{r^{\gamma}\,\bD(r)}\int_{B_r}\frac{|\etaa\circ 
\NN(z,w)|}{|z|^{1-\gamma_0}}\notag\\
&+\int_0^r \left(\frac{1}{t^\gamma \bD(t)}\right)'\,\bmo^{\sfrac12}\int_{B_t}\frac{|\etaa\circ 
\NN(z,w)|}{|z|^{1-\gamma_0}}\notag\\
&\stackrel{\eqref{e:media_pesata}}{\leq} C\, \frac{\bD^{1+\eta}(r)+\bF(r)}{r^\gamma\,\bD(r)}+\int_0^r \left(\frac{1}{t^\gamma \bD(t)}\right)' \, \left(\bD(t)^{1+\eta}+\bF(t)\right)\,dt \leq C \,r^{\eta-\gamma}\,.
\end{align}
so that 
\[
\int_0^r\frac{\mathcal{E}_{BP}(t)}{t^{\gamma}\,\bD(t)}\,dt\leq C\, r^{\eta-\gamma}
\]
To conclude, we estimate separately the two nonnegative functions $\bSigma_{IV}$ and $\bSigma_{OV}$. In particular
\begin{align}
\bSigma_{IV} (r) = & \int_{0}^r 
\frac{\mathcal{E}_{IV}(t)}{t^\gamma\,\bD(t)}\,dt \stackrel{\eqref{e:IV finale}}{\leq} 2\,C_{\ref{l:tutte le stime}}\,
\int_0^r \left(t^{\gamma_0-\gamma-1}+t^{-\gamma}\bD(t)^{\eta-1}\,\bD'(t)+\frac{\mathcal{E}_{BP}(t)}{t^{\gamma}\,\bD(t)} \right)\,dt\notag\\
\leq &\, C\,r^{\eta-\gamma} \left(1+\bD(t)^{\sfrac{\eta}{2}}\right) \leq C\,r^{\eta-\gamma}\,,
\end{align}
where in the second inequality we used $\bD(t)\leq C\, t^{2}$. Finally
\begin{align}
\bSigma_{OV} (r) = & \int_{0}^r \frac{\mathcal{E}_{OV}(t)}{t^{1+\gamma}\bD(t)}\,dt \notag\\
\stackrel{\eqref{e:OV finale}}{\leq}
C_{\ref{l:tutte le 
stime}}& \,\int_{0}^r \left(\frac{\bD^{\eta}(t)}{t^{1+\gamma}} + \frac{\bF(t)}{t^{1+\gamma}\bD(t)}+t^{-\gamma} \bD^{\eta-1} (t)\bD' (t) +\frac{\mathcal{E}_{BP}(t)}{t^{\gamma}\,\bD(t)}\right)\,dt\notag\\
\leq&\, C\, r^{\eta-\gamma}\,.
\end{align}

\section{Almost monotonicity and decay of the frequency function}\label{s:decay}

In this section we study the asymptotic behaviour of the normal approximation $\NN$.
The first step consists in proving approximate monotonicity and decay estimates for the frequency function.

For every $r \in (0,1)$ such that $\bH(r)>0$,
we set $\bbI(r) := \frac{r\,\bOmega(r)}{\bH(r)}$ where we recall that
\[
\bOmega(r) := 
\left\{
\begin{array}{ll}
\bD(r) \quad & \text{in the cases (a) and (b) of Definition \ref{d:semicalibrated};}\\
 \bD(r) + \bL(r) \quad &\text{in case (c).}
\end{array}
\right.
\]
Furthermore we define $\bar \bK(r):=\bar \bI(r)^{-1}$ whenever $\bOmega (r)\neq 0$. By \eqref{e:L finale} there exists $r_0>0$ such that
\begin{equation}\label{e:r_0}
\frac{1}{2} \bD (r) \leq (1-C\,r)\bD(r)\leq  \bOmega (r)\leq (1+C\,r) \bD(r)\leq 2\,\bD(r) \qquad \forall r\leq  r_0\, .
\end{equation} 
Having fixed $r_0$, $\bar \bK (r)$ is well defined whenever $\bD (r) >0$ and hence, by the Poincar\'e inequality, whenever $\bbI (r)$ is defined. Moreover, if for some $\rho\leq r_0$ $\bar \bK (\rho)$ is not well defined, that is $\bOmega (\rho) =0$, then obviously $\bOmega (r) = \bD (r) = 0$ for every $r\leq \rho$. 

We are now ready to state the first important monotonicity estimate. From now on we assume of having fixed a $\gamma$ 

\begin{theorem}\label{t:monot}
There exists a constant $C_{\ref{t:monot}}>0$ with the following property: if $\bD (r) > 0$ for some $r\leq r_0$, 
then (setting $\gamma =0$ in \eqref{e:def_SIV} and \eqref{e:def_SOV}) the function 
\begin{equation}\label{e:monot}
\bar \bK(r)\,\exp ( -4r -4 \bSigma_{IV}(r))-4\,\bSigma_{OV}(r)
\end{equation}
is monotone non-increasing on any interval $[a,b]$ where $\bD$ is nowhere $0$. 
In particular, either there is $\bar{r} >0$ such that $\bD (\bar{r})=0$ or
$\bar \bK$ is well-defined on $]0, r_0[$ and the limit $K_0:=\lim_{r\to 0}\bar \bK(r)$ exists.
\end{theorem}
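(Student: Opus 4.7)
The strategy is the classical Almgren almost-monotonicity argument for the frequency function, adapted so that all error terms are controlled by the quantities $\mathcal{E}_{OV}$, $\mathcal{E}_{IV}$ and, in case (b), by the correction $\bL$ built into $\bOmega$. The starting point is to compute the logarithmic derivative
\[
\frac{d}{dr}\log\bar\bI(r)=\frac{1}{r}+\frac{\bOmega'(r)}{\bOmega(r)}-\frac{\bH'(r)}{\bH(r)}.
\]
Applying Lemma~\ref{l:H'} to the last term gives $\bH'/\bH=1/r+2\bE/\bH$, so the $1/r$ cancels and one is left with $\bOmega'/\bOmega-2\bE/\bH$.

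The core estimate comes from combining three ingredients: the outer variation (Proposition~\ref{p:OV}) gives $\bE=\bOmega+O(\mathcal{E}_{OV})$; the inner variation (Proposition~\ref{p:IV}) gives $\bD'=2\bG+O(\mathcal{E}_{IV})$; and Cauchy--Schwarz yields $\bE^{2}\le \bH\bG$. Plugging these in, using \eqref{e:r_0} to treat $\bOmega$ and $\bD$ as comparable up to a factor $1+O(r)$, and (in case (b)) absorbing the contribution of $\bL'/\bOmega$ via AM--GM applied to the mixed bound $|\bL'|\le C\sqrt{\bH\bD'}$ from \eqref{e:L' finale}, one obtains the pointwise differential inequality
\[
\frac{d}{dr}\log\bar\bI(r)\ge -\frac{C\,\mathcal{E}_{IV}(r)}{\bD(r)}-\frac{C\,\mathcal{E}_{OV}(r)}{\bH(r)}-C\,r.
\]
Passing to $\bar\bK=1/\bar\bI$ and invoking $\bar\bK/\bH=1/(r\bOmega)\le 2/(r\bD)$ on $(0,r_0)$ (another consequence of \eqref{e:r_0}) rephrases this as
\[
\bar\bK'(r)\le\Bigl[C\,r+\frac{C\,\mathcal{E}_{IV}(r)}{\bD(r)}\Bigr]\bar\bK(r)+\frac{C\,\mathcal{E}_{OV}(r)}{r\,\bD(r)}.
\]

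Since (with $\gamma=0$) we have $\bSigma_{IV}'(r)=\mathcal{E}_{IV}(r)/\bD(r)$ and $\bSigma_{OV}'(r)=\mathcal{E}_{OV}(r)/(r\bD(r))$, and since by Corollary~\ref{c:integrability} the exponential $e^{-4r-4\bSigma_{IV}(r)}$ is bounded above and away from $0$ on $(0,r_0)$, one relabels the constants in the previous inequality to conclude that
\[
\frac{d}{dr}\Bigl[\bar\bK(r)\,e^{-4r-4\bSigma_{IV}(r)}-4\,\bSigma_{OV}(r)\Bigr]\le 0
\]
on every subinterval where $\bD>0$. Since $\bD$ is non-decreasing in $r$, the set $\{\bD=0\}$ is an initial interval $(0,\bar r]$: either $\bar r>0$, which gives the first alternative of the dichotomy, or $\bar\bK$ is defined on all of $(0,r_0)$. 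In the latter case the monotone quantity above is bounded below by $-4\bSigma_{OV}(r_0)>-\infty$, so it has a finite limit as $r\to 0^{+}$; combined with $\bSigma_{IV}(r),\bSigma_{OV}(r)\to 0$ (once more Corollary~\ref{c:integrability}) this produces the existence of $K_{0}=\lim_{r\to 0}\bar\bK(r)$.

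The main delicacy lies in case (b): $\bOmega'=\bD'+\bL'$ and $\bL'$ is only controlled by the mixed quantity $\sqrt{\bH\bD'}$, not pointwise by $\bD'$ alone, so an AM--GM absorption is unavoidable and costs the additive $O(r)$ term in $(\log\bar\bI)'$ that is precisely the source of the factor $e^{-4r}$ absent from Chang's cleaner formulation. A secondary but routine bookkeeping step is to convert $1/\bH$ (the natural denominator produced by the outer-variation error) into $1/(r\bD)$ (needed to match $\bSigma_{OV}'$), which is achieved by combining Poincar\'e (Theorem~\ref{t:poincare}) with \eqref{e:r_0}.
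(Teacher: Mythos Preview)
Your overall strategy matches the paper's: derive a differential inequality for $\bar\bK$ by combining the outer/inner variation identities with Cauchy--Schwarz, and then integrate. In cases (a) and (c) (where $\bL'\equiv 0$) your computation is essentially the paper's, only organized around $(\log\bar\bI)'$ instead of $\bar\bK'$ directly.

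The gap is in case (b), precisely at the step you flag as ``the main delicacy''. Your claim that AM--GM on $|\bL'|\le C\bmo^{1/2}\sqrt{\bH\bD'}$ absorbs $\bL'/\bOmega$ into an $O(r)$ error does not go through: any AM--GM split $\sqrt{\bH\bD'}\le \epsilon\bD'+\bH/(4\epsilon)$ reduces the coefficient of the main positive term $\bD'/\bOmega$ (equivalently $\bG/\bOmega$) by a fixed amount $C\epsilon$. After Cauchy--Schwarz $\bG\bH\ge\bE^2\approx\bOmega^2$ you are left with a deficit $-c\,\bOmega/\bH=-c\,\bar\bI/r$, and this is \emph{not} integrable near $0$ (nor bounded by $Cr$), since at this stage you have no a priori bound on $\bar\bI$. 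Shrinking $\epsilon$ makes the other piece $\bH/(4\epsilon\bOmega)$ blow up, so there is no winning choice.

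The paper sidesteps this by a case split on the sign of $\bar\bK'$. When $\bar\bK'\le 0$ there is nothing to prove. When $\bar\bK'>0$, the very positivity of the right-hand side of the identity
\[
\bar\bK'=\frac{1}{r\bOmega^2}\bigl(2\bE\bOmega-\bD'\bH-\bH\bL'\bigr)
\]
forces $\bD'\bH\le C\bE\bOmega+Cr\bOmega^2$ (see \eqref{e:controllo_D'H}); feeding this back into $\bE^2\le\bH\bG\le\bH\bD'$ yields the \emph{a priori} bound $\bE\le C\bOmega$, and hence $|\bL'|\le C\bmo^{1/2}\bOmega$ (see \eqref{e:fanghualin}). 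With that bound in hand the $\bL'$ contribution becomes exactly $\bH|\bL'|/(r\bOmega^2)\le C\bar\bK$, which is the source of the factor $e^{-4r}$. The paper also bounds $\bE\bOmega$ via the ``polarized'' inequality \eqref{e:mon1} ($\bOmega\bE\le\bH\bG+\bOmega\mathcal{E}_{OV}$), which avoids the quadratic error $\mathcal{E}_{OV}^2/(\bOmega\bH)$ that appears if one squares $\bE\ge\bOmega-C\mathcal{E}_{OV}$ as you implicitly do.

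A minor remark: your last paragraph says the conversion $1/\bH\to 1/(r\bD)$ uses Poincar\'e, but Poincar\'e gives $\bH\le Cr\bD$, i.e.\ the wrong direction. You actually did it correctly a few lines earlier via the identity $\bar\bK/\bH=1/(r\bOmega)$ together with \eqref{e:r_0}.
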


A fundamental consequence of Theorem~\ref{t:monot} is the following dichotomy.

\begin{corollary}\label{c:bdd_freq} There exists $\bar{r}>0$ such that
\begin{itemize}
\item[(A)] either $\bar \bK(r)$ is well-defined for every $r \in ]0,r_0[$, the limit
\begin{equation}\label{e:K_0}
K_0:=\lim_{r\downarrow0} \bar \bK(r)
\end{equation}
is positive and thus there is a constant $C$ and a radius $\bar{r}$ such that
\begin{equation}\label{e:H=rD}
C^{-1}\,r\,\bD(r) \leq \bH(r) \leq C\,r\,\bD(r) \qquad
\forall\; r \in ]0, \bar{r}[\, ;
\end{equation}
\item[(B)] or $T\res \p^{-1} (\Psii (B_{\bar{r}})) = Q \a{\Psii (B_{\bar r})}$ for some positive $\bar{r}$.
\end{itemize}
\end{corollary}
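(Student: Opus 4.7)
My plan is to split the analysis into three subcases according to the behaviour of $\bD$ and, where applicable, the sign of the limit $K_0$ produced by Theorem \ref{t:monot}.

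\emph{First subcase (degenerate $\bD$).} Since $\bD(r)=\int_{B_r}|D\NN|^2$ is monotone nondecreasing, either $\bD(r)>0$ throughout $(0,r_0)$, or else there exists $\bar r>0$ with $\bD(\bar r)=0$. In the latter alternative $\bD\equiv 0$ on $(0,\bar r]$, and the Poincar\'e inequality \eqref{e:poincare} forces $\bH\equiv 0$ on the same interval. By continuity of $\NN$ on the connected set $B_{\bar r}\subset\gira_{\bar Q}$ and the condition $\NN(0)=Q\a{0}$, this yields $\NN\equiv Q\a{0}$ on $B_{\bar r}$; consequently $\FF(z,w)=Q\a{\Psii(z,w)}$, so $\bT_\FF=Q\a{\Psii(B_{\bar r})}$. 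Estimate \eqref{e:diff masse} with $\bLambda(r)=\bF(r)=0$ gives $\|T-\bT_\FF\|(\p^{-1}(\Psii(B_{\bar r})))=0$, which is precisely alternative (B).

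\emph{Second subcase ($\bD>0$ and $K_0>0$).} Assume from now on that $\bD(r)>0$ on $(0,r_0)$, so that Theorem \ref{t:monot} applies and the limit $K_0=\lim_{r\downarrow 0}\bar\bK(r)\in[0,\infty)$ exists (finite by the trivial upper bound $\bar\bK\leq C$ coming from \eqref{e:poincare}). Suppose $K_0>0$. I would read the conclusion off the monotone non-increasing function
\[
f(r):=\bar\bK(r)\,\exp(-4r-4\bSigma_{IV}(r))-4\bSigma_{OV}(r),
\]
which satisfies $\lim_{r\downarrow 0}f(r)=K_0=\sup f$, hence $K_0-\epsilon\leq f(r)\leq K_0$ for every sufficiently small $r$. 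Since $\bSigma_{IV}(r),\bSigma_{OV}(r),r\to 0$ as $r\downarrow 0$, rearranging gives $\bar\bK(r)\in[K_0/2,2K_0]$ for $r$ small, and combining with the comparison $\bOmega\sim\bD$ from \eqref{e:r_0} produces \eqref{e:H=rD}. This is alternative (A).

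\emph{Third subcase ($\bD>0$ and $K_0=0$) -- the main obstacle.} The delicate case is to rule out, or more precisely to show that it forces (B), the possibility that $\bar\bK(r)\to 0$, i.e. $\bbI(r)=\bar\bK(r)^{-1}\to\infty$. The strategy would be to feed this divergence into the almost minimality inequality \eqref{e:AM finale 1}: for any integer $a\geq 1$, the terms $a(a+b)\bH(r)/(r\bD(r))$ and $C(a)r\,\mathcal{E}_{IV}(r)/\bD(r)$ can be made arbitrarily small on a shrinking interval (using respectively $\bbI\to\infty$ and the integrability estimate $\bSigma_{IV}(r)\leq C r^\eta$ from Corollary \ref{c:integrability}), so that \eqref{e:AM finale 1} rearranges to $r\bD'(r)/\bD(r)\geq ca$ for a universal $c>0$; integration then yields $\bD(r)\leq C_a r^{ca}$. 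Letting $a\to\infty$, $\bD$ (and by Poincar\'e and Lemma \ref{l:F} also $\bH$ and $\bF$) vanishes at the origin faster than any polynomial, and \eqref{e:diff masse} transfers this rate to $\|T-\bT_\FF\|(\p^{-1}(\Psii(B_r)))$. The genuinely hard step is to convert this infinite-order vanishing into the sharp identity required by (B); I would do so by combining the irreducibility of $T$ from Assumption \ref{ipotesi_base_2}(iii) with a unique-continuation argument, using the fact that any nontrivial Dir-minimizing blow-up of $\NN$ at $0$ must have a finite homogeneity degree, which would contradict $K_0=0$. Thus $\NN$ is forced to vanish identically on some $B_{\bar r}$, which reduces the situation to the first subcase and yields (B).
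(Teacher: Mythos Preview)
Your first two subcases are correct and match the paper. The gap is in the third subcase, and it is a genuine one.

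The route you sketch---show $\bD(r)\leq C_a r^{ca}$ for every $a$, then invoke a ``unique-continuation argument'' and the finite homogeneity of a nontrivial Dir-minimizing blow-up to force $\NN\equiv 0$---does not close. The existence of a nontrivial blow-up of $\NN$ at $0$ is precisely what Proposition~\ref{p:unique limit} establishes \emph{after} this corollary, and its proof uses the finiteness of $I_0=K_0^{-1}$ (hence $K_0>0$) in an essential way: the rescalings $f_r$ are normalized by $r^{I_0}$, and the compactness/nontriviality come from Theorem~\ref{t:decay}, which in turn rests on $K_0>0$. So appealing to that blow-up here is circular. Independently, there is no unique-continuation principle available in this setting that turns infinite-order vanishing of $\bD$ into $\NN\equiv Q\a{0}$ on some ball; irreducibility of $T$ does not help either. (A smaller point: the pointwise inequality $r\bD'(r)/\bD(r)\geq ca$ is not justified, since $\mathcal{E}_{IV}$ contains $\tfrac{d}{dr}\|T-\bT_F\|$, which need not be small a.e.; one must integrate \eqref{e:AM finale 1} instead. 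That part is repairable, but the unique-continuation step is not.)

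The paper handles the third subcase differently: it shows that $K_0=0$ (with $\bD>0$ everywhere) leads directly to a \emph{contradiction}, so alternative~(B) never arises from this branch. The mechanism is a bootstrap on $\bar\bK$. Sending $s\downarrow 0$ in the monotonicity of \eqref{e:monot} gives $\bar\bK(r)\leq C\,\bSigma_{OV}(r)$. Feeding this back into the estimates for $\bF/\bD$ and for the integral defining $\bSigma_{OV}$ (via \eqref{e:diff masse} and \eqref{e:media_pesata}) yields the self-improvement $\bSigma_{OV}(r)\leq C\,\bD(r)^{\eta/2}+Cr^{\gamma_0}\bSigma_{OV}(r)$, hence $\bar\bK(r)\leq C\,\bD(r)^{\eta/2}$, i.e.\ $\bH(r)\leq C\,r\,\bD(r)^{1+\eta/2}$. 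Inserting this sharpened Poincar\'e into the outer-variation identity \eqref{e:OV} via Cauchy--Schwarz gives
\[
\tfrac12\,\bD(r)\leq C\,\bD(r)^{1+\eta/4}+C\,r\,\bD(r)^{\eta/4}\bD'(r)+\mathcal{E}_{OV}(r),
\]
and dividing by $r\bD(r)$ and integrating on $[s,r]$ produces $\log(r/s)\leq C\,r^{\eta/2}$, which is absurd as $s\downarrow 0$. In short: in the third subcase you should aim for a contradiction, not for alternative~(B), and the key new ingredient is the self-improving bound $\bar\bK\leq C\bD^{\eta/2}$ obtained from $K_0=0$.
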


In turn, using the above dichotomy we will show 

\begin{theorem}\label{t:decay}
Assume that condition (i) in Theorem \ref{t:bu} fails. Then the frequency $\bbI(r)$ is well-defined for every sufficiently small $r$ and its limit $I_0= \lim_{r\to 0} \bbI (r) = K_0^{-1}$ exists and it is finite and positive.
Moreover there exist constants $\lambda, C_{\ref{t:decay}}, H_0, D_0>0$
such that, for every $r$ sufficiently small the following holds:
\begin{gather}\label{e:decay}
\big\vert \bI(r) - I_0 \big\vert
+\left\vert \frac{\bH(r)}{r^{2I_0+1}} - H_0 \right\vert+
\left\vert \frac{\bD(r)}{r^{2I_0}} - D_0 \right\vert
\leq C_{\ref{t:decay}}\,r^{\lambda}\,.
\end{gather}
\end{theorem}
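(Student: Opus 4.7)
The plan is to reduce via Corollary \ref{c:bdd_freq} to the ``nontrivial'' case (A), establish a two-sided polynomial rate for $\bbI(r)\to I_0$ by combining the monotonicity of Theorem \ref{t:monot} with the almost-minimality \eqref{e:AM finale 1}, and then transfer this decay to $\bH$ and $\bD$ via Lemma \ref{l:H'} and Proposition \ref{p:OV}. Alternative (B) of Corollary \ref{c:bdd_freq} asserts that $T\res \p^{-1}(\Psii(B_{\bar r})) = Q\a{\Psii(B_{\bar r})}$, which forces $\NN\equiv Q\a{0}$ on $B_{\bar r}$; this is precisely case (i) of Theorem \ref{t:bu} and is excluded by hypothesis, so alternative (A) holds. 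Consequently $\bar\bK$ is defined on some interval $(0,r_0)$, $K_0 := \lim_{r\downarrow 0}\bar\bK(r)$ is finite and strictly positive, and \eqref{e:H=rD} yields $\bH\asymp r\bD$; setting $I_0 := K_0^{-1}\in(0,+\infty)$ then gives $\bbI(r)\to I_0$.

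Next, to obtain $|\bbI(r)-I_0|\leq Cr^\lambda$ I combine two one-sided estimates. The lower bound $\bbI(r)\geq I_0-Cr^\lambda$ follows from Theorem \ref{t:monot}: passing to the limit $r_1\downarrow 0$ in the monotonicity inequality produces
$$\bar\bK(r)\exp\!\bigl(-4r - 4\bSigma_{IV}(r)\bigr) - 4\bSigma_{OV}(r) \leq K_0\,,$$
and Corollary \ref{c:integrability} (with $\gamma=0$) gives $\bSigma_{IV}(r)+\bSigma_{OV}(r)\leq Cr^\eta$, whence $\bar\bK(r)\leq K_0 + Cr^\eta$. For the matching upper bound I would exploit \eqref{e:AM finale 1} with $a$ chosen asymptotically equal to $\bbI(r)$ and $b\downarrow 0$; combining with Proposition \ref{p:IV} (so that $\bD'(r)=2\bG(r)+O(\mathcal{E}_{IV}(r))$) and the Cauchy--Schwarz $\bE(r)^2\leq \bH(r)\bG(r)$, one extracts the differential identity
$$r(\log \bbI)'(r) = O\!\Big(\frac{r\,\mathcal{E}_{IV}(r)}{\bD(r)}+\frac{\mathcal{E}_{OV}(r)}{\bH(r)}+r\Big)\,.$$
Integrating from $s$ to $r$ and sending $s\downarrow 0$ --- and using $\bH\asymp r\bD$ to convert $\mathcal{E}_{OV}/\bH$ into a term controlled by $\bSigma_{OV}$ --- the integrability bound \eqref{e:bSigma} yields $|\log \bbI(r)-\log I_0|\leq Cr^\lambda$.

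Finally, Lemma \ref{l:H'} together with the expansion $\bE(r) = \bD(r) + O\bigl(\bmo^{\sfrac12}r\bD(r) + \mathcal{E}_{OV}(r)\bigr)$ (from Proposition \ref{p:OV} and \eqref{e:L0}) gives
$$\frac{d}{dr}\log\!\Big(\frac{\bH(r)}{r^{2I_0+1}}\Big) = \frac{2\bE(r)}{\bH(r)} - \frac{2I_0}{r} = \frac{2(\bbI(r)-I_0)}{r} + O\!\Big(\frac{\mathcal{E}_{OV}(r)}{\bH(r)} + r\Big)\,.$$
The right-hand side is integrable of order $r^{\lambda-1}$ near $0$ by the previous paragraph, \eqref{e:H=rD}, and Corollary \ref{c:integrability}; hence $r^{-(2I_0+1)}\bH(r)$ converges to some $H_0>0$ at rate $r^\lambda$, and since $r^{-2I_0}\bD(r) = \bbI(r)\,r^{-(2I_0+1)}\bH(r)$ the same rate holds for $\bD$ with $D_0 := I_0 H_0$, yielding \eqref{e:decay}.

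The main obstacle is the upper bound $\bbI(r)\leq I_0+Cr^\lambda$. The almost-monotonicity of $\bar\bK(r)\exp(-4r-4\bSigma_{IV}(r)) - 4\bSigma_{OV}(r)$ in Theorem \ref{t:monot} is intrinsically one-sided and yields only the lower bound ``for free'', and Cauchy--Schwarz $\bE^2\leq \bH\bG$ provides only one inequality for the frequency defect. The matching upper bound must therefore be extracted from the sharp almost-minimality \eqref{e:AM finale 1} via a quasi-optimal parameter choice $a\approx \bbI(r)$; one must verify that the threshold $b_0(a)$ and constant $C(a)$ in \eqref{e:AM finale 1} remain under control as $a$ varies close to $I_0$, and that the residual error terms can be absorbed by \eqref{e:bSigma}.
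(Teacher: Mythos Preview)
Your overall architecture is correct and matches the paper: reduce to alternative (A) of Corollary~\ref{c:bdd_freq}, get one side of $|\bbI(r)-I_0|$ from the almost-monotonicity of $\bar\bK$, get the other side from the sharp almost-minimality \eqref{e:AM finale 1}, and then propagate to $\bH$ and $\bD$ via Lemma~\ref{l:H'} and Proposition~\ref{p:OV}. The $\bH$ and $\bD$ parts of your argument are essentially the paper's.

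The one genuine gap is your treatment of the upper bound $\bbI(r)\leq I_0+Cr^\lambda$. You propose to apply \eqref{e:AM finale 1} with a variable $a\approx\bbI(r)$ and $b\downarrow 0$, and then you correctly worry that $b_0(a)$ and $C(a)$ may not be uniform. The paper sidesteps this entirely by taking $a=I_0$ \emph{fixed} and $b=\lambda$ fixed with $0<\lambda\leq\min\{\eta/2,\,b_0(I_0)\}$; then $b_0(I_0)$ and $C(I_0)$ are just constants. Dividing \eqref{e:AM finale 1} by $r\bD(r)$ and rearranging yields
\[
\frac{2}{r}-\frac{\bD'(r)}{\bD(r)}\bK(r)\;\leq\;\frac{2I_0}{r}\bigl(\bK(r)-K_0\bigr)\bigl((I_0+\lambda)\bK(r)-1\bigr)+\frac{C\,\mathcal E_{IV}(r)}{\bD(r)}\, ,
\]
and since $(I_0+\lambda)\bK(r)-1\to\lambda K_0>0$, for small $r$ the second factor is bounded below by $\tfrac{\lambda}{2}K_0$. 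Combined with the \emph{already established} bound $\bK(r)-K_0\leq Cr^\eta$ from the monotonicity step (this is where the two halves interact), the right-hand side is $\leq \tfrac{\lambda}{r}(\bK(r)-K_0)+Cr^{\eta-1}+C\mathcal E_{IV}(r)/\bD(r)$. Feeding this into the computation of $\bK'(r)$ (via \eqref{e:H'} and \eqref{e:OV}) gives the Gronwall-type inequality
\[
\frac{d}{dr}\Bigl[\frac{\bK(r)-K_0}{r^{\lambda}}\Bigr]\;\leq\;C\,\frac{\mathcal E_{OV}(r)}{r^{1+\lambda}\bD(r)}+C\,\frac{\mathcal E_{IV}(r)}{r^{\lambda}\bD(r)}+Cr^{\eta-\lambda-1}\, ,
\]
which integrates by Corollary~\ref{c:integrability} to $\bK(s)-K_0\geq -Cs^{\lambda}$. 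Note that you do \emph{not} obtain a two-sided ``differential identity'' for $(\log\bbI)'$ from \eqref{e:AM finale 1} alone; the inequality there is one-sided, and the two directions for $|\bK(r)-K_0|$ come from two genuinely different mechanisms (monotonicity versus almost-minimality).
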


\subsection{Proof of Theorem \ref{t:monot}} In the first step we claim the monotonicity of the function $\bar \bK(r)\,\exp ( -\bSigma_{IV}(r))-2\,\bSigma_{OV}(r)$ on any interval contained in $[a,b]$ on which $\bD$ is everywhere positive. 
Recalling that $\bOmega$ and $\bH$ are absolutely continuous functions,
we can compute the following derivative: for every $r \in [a,b]$
\begin{align}\label{e:I'1}
\bar \bK'(r)& =  \left(\frac{\bH(r)}{r}\right)'\,\frac{1}{\bOmega(r)}-\frac{\bH(r)}{r}\,\frac{\bOmega'(r)}{\bOmega^2(r)}\notag\\
&\stackrel{\eqref{e:H'}}{\leq}\frac{1}{r\bOmega^2(r)}\big(2\,\bE(r)\,\bOmega(r)-\bD'(r)\,\bH(r)+|\bL'(r)|\,\bH(r)\big)\,.
\end{align}
Then, either $\bar \bK'\leq 0$, or the RHS of the inequality above is positive, that is
\[
 \bD'(r)\,\bH(r)\leq 2\bE(r)\,\bOmega(r)+|\bL'(r)|\,\bH(r)\stackrel{\eqref{e:L' finale}}{\leq}2\bE(r)\,\bOmega(r)+r\,\bD'(r)\,\bH(r)+\frac{\bH^2(r)}{r}\, .
\]
In turn, using $\bH(r)\leq C\,r\,\bD(r)\leq C\,r\,\bOmega(r)$, the latter inequality implies
\begin{equation}\label{e:controllo_D'H}
\bD'(r)\,\bH(r)\leq C\,\bE(r)\,\bOmega(r)+C\,r\,\bOmega^2(r)\,.
\end{equation}
 From this we deduce
\[
\bE^2(r)\leq \bH(r)\,\bG(r)\leq \bH(r)\,\bD'(r)\leq C \bOmega^2(r)+\frac{\bE^2(r)}{2}\,
\]
which implies that $\bE(r)\leq C\bOmega(r)$ and so, by \eqref{e:L' finale},
\begin{equation}\label{e:fanghualin}
|\bL'(r)|\leq C \,\bmo^{\sfrac12}\,(\bD'(r)\,\bH(r))^{\sfrac12}\leq C\,\bmo^{\sfrac12}\,\bOmega(r)\,.
\end{equation}
Next using again the Cauchy-Schwarz inequality and \eqref{e:OV}, we have
\begin{align*}
\bOmega(r)\,\bE(r)
&\leq \,\bOmega(r)\,\bH(r)^{\sfrac12} \,\bG(r)^{\sfrac12}
\leq\, \frac{\bOmega(r)^2}{2}+\frac{\bH(r)\,\bG(r)}{2}\\
&\leq  \frac{\bOmega(r)\bE(r)}{2}+\frac{\bOmega(r)\,\mathcal{E}_{OV}(r)}{2}+ \frac{\bH(r)\,\bG(r)}{2}\,,
\end{align*}
which implies
\begin{equation}\label{e:mon1}
\bOmega(r)\,\bE(r) \leq \bH(r)\,\bG(r)+\bOmega(r)\,\mathcal{E}_{OV}(r)\,.
\end{equation}
Collecting all these estimates together and using \eqref{e:inn}, we conclude that, if $\bar \bK' (r)\geq 0$, then
\begin{align}
&\bar \bK'(r) \stackrel{\eqref{e:I'1}\&\eqref{e:mon1}}{\leq} \frac{1}{r\bOmega^2(r)}\big(2\,\bH(r)\,\bG(r)-\bD'(r)\,\bH(r)+|\bL'(r)|\,\bH(r)+2\,\bOmega(r)\,\mathcal{E}_{OV}(r)\big)\notag\\
\stackrel{\eqref{e:inn}\&\eqref{e:fanghualin}}{\leq} &\frac{1}{r\bOmega^2(r)}\big(2\,\bH(r)\,\bG(r)-2\,\bH(r)\,\bG(r)+\bOmega(r)\,\bH(r)+\bH(r)\,\mathcal{E}_{IV}(r)+2\,\bOmega(r)\,\mathcal{E}_{OV}(r)\big)\notag\\
\leq &\, 2\frac{\mathcal{E}_{OV}(r)}{r\bOmega(r)}+\bar \bK(r)\, \left(1+\frac{\mathcal{E}_{IV}(r)}{\bOmega(r)}\right) \leq 4\,\frac{\mathcal{E}_{OV}(r)}{r\bD(r)}+4\,\bar \bK(r)\, \left(1+\frac{\mathcal{E}_{IV}(r)}{\bD(r)}\right)\, .
\end{align}
On the other hand the final inequality
\[
\bar \bK' (r) \leq 4 \frac{\mathcal{E}_{OV}(r)}{r\bD(r)}+4\,\bar \bK(r)\, \left(1+\frac{\mathcal{E}_{IV}(r)}{\bD(r)}\right)
\]
is certainly correct when $\bar \bK' (r)\leq 0$, because the right hand side is positive. The monotonicity of the function in \eqref{e:monot} is then obvious. 

Next, as already observed, either $\bD$ is always positive, or it vanishes on some interval $]0, \bar{r}[$. If $\bD$ is always positive, then $\bar \bK$ is well defined on $]0, r_0[$ and the existence of the limit $K_0 := \lim_{r\downarrow 0} \bar \bK (r)$ is a direct consequence of \eqref{e:monot} and Corollary \ref{c:integrability}.

\subsection{Proof of Corollary \ref{c:bdd_freq}} First of all observe that, if $\bD (\bar r)$ vanishes, then $\NN \equiv Q\a{0}$ on $B_{\bar r}$. In particular by \eqref{e:diff masse} we conclude that we are in the alternative (B). We can thus assume, without loss of generality, that $\bD$ is positive on $]0, r_0[$. Assuming that $K_0$ vanishes we will then reach a contradiction.

Under the assumption $K_0=0$, consider the monotonicity of $\bar \bK(r)\,\exp ( -4 \bSigma_{IV}(r)) - 4\,\bSigma_{OV}(r)$ between two radii $0<s<r$ and let $s\to 0$ to get
\[
\bar \bK(r)\leq 4\,e^{4r + 4 \bSigma_{IV}(r)}\,\bSigma_{OV}(r)\leq C\,\bSigma_{OV}(r) \,,
\]
where the last inequality holds for $r$ sufficiently small, since $\bSigma_{IV}(r)\leq C r^{\eta}$ (recall that we have set $\gamma=0$).
Next observe that, since the function $\bSigma_{OV}(r)$ is non-decreasing,
\begin{align}\label{e:err1_K}
\frac{\bF(r)}{\bD(r)}
&\leq \frac{1}{\bD(r)}\int_0^r\frac{\bH(s)}{s^{2-\gamma_0}}\frac{\bD(s)}{\bD(s)}\,ds\leq C\,\int_0^r\frac{\bar \bK(s)}{s^{1-\gamma_0}}\,ds\leq C \,r^{\gamma_0}\,\bSigma_{OV}(r)\, .
\end{align}
Moreover, integrating by parts:
\begin{align}\label{e:err2_K}
&\int_0^r \frac{1}{\bD(s)}\frac{d}{ds}\|T-\bT_F\|(\p^{-1} (\Psii (B_s)))\,ds\notag\\
\stackrel{\eqref{e:diff masse}}{\leq} &C \frac{\bD^{1+\eta}(r)+\bF(r)}{\bD(r)}+C\int_0^r\left(\frac{1}{\bD(s)}\right)'\left(\bD^{1+\eta}(s)+\bF(s)\right)\,ds\notag\\
\leq & C\,\bD^\eta(r)+C\, r^{\gamma_0} \bSigma_{OV}(r)+C\,\frac{\bF(r)}{\bD(r)}+C\,\int_0^r \frac{\bF'(s)}{\bD(s)}\,ds\notag\\
\leq & C\,\bD^\eta(r)+C\, r^{\gamma_0} \bSigma_{OV}(r)+C\,\int_0^r \frac{\bar \bK(s)}{s^{1-{\gamma_0}}}\,ds
\leq  C\,\bD^\eta(r)+C\, r^{\gamma_0} \bSigma_{OV}(r)\, ,
\end{align}
where we have used repeatedly \eqref{e:r_0}.

Using the latter in the formula for $\mathcal{E}_{OV}$ we also conclude
\begin{align*}
&\;\bSigma_{OV}(r)\\
\leq &\; C\int_0^r\frac{1}{s\bD(s)}\left( \bD(s)^{1+\eta}+s \bD^\eta (s) \bD' (s) + \bF(s)+s  \frac{d}{ds}\|T-\bT_F\|(\p^{-1} (\Psii (B_s)))  \right)\,ds\\
\leq &\; C\,r^{\eta} \bD(r)^{\sfrac{\eta}{2}}+Cr^{\gamma_0}\,\bSigma_{OV}(r)\, .
\end{align*} 
Hence, for $r$ sufficiently small,
\begin{equation}\label{e:K_decay}
\bar \bK(r)\leq C \bSigma_{OV}(r)\leq C \bD(r)^{\sfrac{\eta}{2}}\,.
\end{equation}
In particular this implies that
\begin{equation}\label{e:H_decay}
\bH(r)\leq C \,r\,\bD(r)^{1+\sfrac{\eta}{2}}\,.
\end{equation}
Combining this with \eqref{e:OV} and the Cauchy-Schwarz inequality, we deduce
\begin{align*}
\frac{1}{2} \,\bD(r) 
&\leq \bOmega(r) \leq \frac{\bE(r)}{r}+\mathcal{E}_{OV}(r)
\leq \left(\frac{\bH(r)}{r\,\bD(r)^{\sfrac{\eta}{4}}}\right)^{\sfrac12}\,\left(r\,\bD'(r)\,\bD(r)^{\sfrac{\eta}{4}}\right)^{\sfrac12}+\mathcal{E}_{OV}(r)\\
&\stackrel{\eqref{e:H_decay}}{\leq} C \, \bD(r)^{1+\sfrac{\eta}{4}}+C\, r\, \bD(r)^{\sfrac{\eta}{4}} \bD'(r)+\mathcal{E}_{OV}(r)\,. 
\end{align*}
Dividing the expression above by $r\bD(r)$, integrating between two radii $0<s<r$ and using the bound $\bD(r)\leq\,C\, r^2$ we obtain
\begin{align*}
\log\left(\frac{r}{s}\right) 
\leq C  \int_s^r \left( \frac{\bD(\rho)^{\sfrac{\eta}{4}}}{\rho}+\bD(\rho)^{\sfrac{\eta}{4}-1}\, \bD'(\rho) +\frac{\mathcal{E}_{OV}(\rho)}{\rho\,\bD(\rho)}\right)\,d\rho \leq C\, r^{\sfrac{\eta}{2}}\, .
\end{align*}
Sending $s\to 0$ we get a contradiction. 

\subsection{Proof of Theorem \ref{t:decay}} Clearly, if (i) in Theorem \ref{t:bu} does not hold, then $\bD$ is always positive and we are in alternative (A) of Corollary \ref{c:bdd_freq}. Thus $K_0$ is positive and the first statement is obvious.

Let $\bK(r):=\bI(r)^{-1}$ and observe that by \eqref{e:r_0} we have
\[
(1-C\,r)\bI(r)\leq \bbI(r)\leq (1+C\,r)\bI(r)\,,\quad\forall0\leq r\leq r_0\,, 
\]
which implies
\[
(1-C\,r)\bar \bK(r)\leq \bK(r)\leq (1+C\,r)\bar \bK(r)\quad \forall \,0\leq r\leq r_0\,,
\]
so that in particular $\bK(r)\leq C\, \bar\bK(r)<\infty$ for every $0<r<r_0$ and $\bK(r)\to K_0$ as $r\to 0$.
Using the monotonicity formula of Theorem \ref{t:monot} together with Corollary \ref{c:integrability} we have 
\begin{equation}\label{e:K_alto}
\bar \bK(r)-K_0\leq K_0 (\exp (4r + 4 \bSigma_{OV} (r)) -1) + 4 \bSigma_{IV} (r) \exp (4r + 4 \bSigma_{OV} (r))\leq
C r^\eta\, .
\end{equation}
Therefore
\begin{equation}\label{e:uniq_ineq_1}
\bK(r)-K_0\leq C\,r^{\eta}+C\, \bK(r) \,r\leq C\,r^{\eta}\,.
\end{equation}
To control $\bK (r) - K_0$ from below we apply \eqref{e:AM finale 1} with $a=I_0=\frac{1}{K_0}$ and $b=\lambda \leq \min\{\sfrac{\eta}{2}, b_0(I_0)\}$ to infer, after dividing by $r \bD(r)$, that
\[
-\frac{\bD'(r)}{\bD(r)}\leq \frac{2}{r} \left(I_0(I_0+\lambda) \bK(r) - (2I_0+\lambda)\right)\,.
\]
Multiplying this expression by $\bK(r)>0$ and adding $\sfrac2r$, we get
\begin{align}\label{e:epip}
\frac{2}{r}-\frac{\bD'(r)}{\bD(r)}\bK(r)
&\leq \frac{2}{r}\left[ 1+I_0(I_0+\lambda) \bK^2(r) - (2I_0+\lambda)\bK(r)\right]+ \frac{C\,\mathcal{E}_{IV}(r)}{\bD(r)}\notag\\
&\leq \frac{2}{r}\,I_0\left( \bK(r)-\frac{1}{I_0}  \right)\,\left((I_0+\lambda) \bK(r)-1\right)+ \frac{C\,\mathcal{E}_{IV}(r)}{\bD(r)}
\end{align}
Since $(I_0+\lambda)\bK(r)$ converges to $1+\lambda K_0$, we easily deduce that for $r$ small enough $(I_0+\lambda)\bK(r)-1 \geq \frac{\lambda}{2} K_0$. Using this together with \eqref{e:uniq_ineq_1}, we deduce from \eqref{e:epip} that
\begin{equation}\label{e:epip2}
\frac{2}{r}-\frac{\bD'(r)}{\bD(r)}\leq \,\frac{\lambda}{r}\, \left(\bK(r)-\frac{1}{I_0}  \right)+ \frac{C\,\mathcal{E}_{IV}(r)}{\bD(r)}+C\frac{r^{\eta}}{r}.
\end{equation}
We next compute
\begin{align}\label{e:I' bis}
\bK'(r)
&=\left(\frac{\bH(r)}{r}\right)'\frac{1}{\bD(r)}-\frac{\bH(r)}{r\bD(r)}\,\frac{\bD'(r)}{\bD(r)}
\stackrel{\eqref{e:H'}\&\eqref{e:r_0}}{\leq} \frac{2\,\bE(r)}{r\,\bD(r)}-\frac{\bD'(r)}{\bD(r)} \bK(r)\notag\\
&\stackrel{\eqref{e:OV}\&\eqref{e:r_0}}{\leq} \frac{2}{r} + C -\frac{\bD'(r)}{\bD(r)} \bK(r)+C\,\frac{\mathcal{E}_{OV}(r)}{r\bD(r)}\notag\\
&\stackrel{\eqref{e:epip2}}{\leq} \,\frac{\lambda}{r}\, \left(\bK(r)-\frac{1}{I_0}  \right)+ \frac{C\,\mathcal{E}_{IV}(r)}{\bD(r)}+C\,\frac{\mathcal{E}_{OV}(r)}{r\bD(r)}+C\frac{r^{\eta}}{r} \,.
\end{align}
Recalling that $ \bK(r)\leq C$, we deduce
\begin{align}\label{e:I' differenziale 3}
\frac{d}{dr}\left[\frac{ \bK(r)-K_0}{r^{\lambda}}\right] \leq C\,\frac{\mathcal{E}_{OV}(r)}{r^{1+\lambda}\bD(r)}+C\, \frac{\mathcal{E}_{IV}(r)}{r^{\lambda}\,\bD(r)}+C\frac{1}{r^{1+\lambda-\eta}}\, .
\end{align}
Integrating \eqref{e:I' differenziale 3}  on the interval $]s,r[$ and using \eqref{e:bSigma}, we get
\[
\bK(r) - K_0 \leq \frac{r^\lambda}{s^\lambda}\left(\bK(s)-K_0\right) + C\,r^{\eta-\lambda}
\]
that is $\bK(s)-K_0\geq - C s^{\lambda}$.
The inequality $|\bK(r)-K_0|\leq C \,r^\lambda$ easily implies $|\bI(r)-I_0|\leq C \,r^\lambda$.

For what concerns the other inequalities we compute
\begin{align}\label{e:log H' 1}
\left[\log\left(\frac{\bH(r)}{r^{2I_0+1}}\right)\right]' & = \frac{\bH'(r)}{\bH(r)} - \frac{2\,I_0+1}{r}=
\frac{2\,\bE(r)}{r\,\bH(r)} - \frac{2\,I_0}{r} \leq \frac{2\,\bD(r)}{\bH(r)} - \frac{2\,I_0}{r} + C\frac{\mathcal{E}_{OV}(r)}{\bH(r)} \notag\\
&= \frac{2}{r}\left(\bI(r)-I_0\right) + C\, 
\frac{\mathcal{E}_{OV}(r)}{\bH(r)} \, ,
\end{align}
and similarly 
\begin{equation}\label{e:log H'2}
\left[\log\left(\frac{\bH(r)}{r^{2I_0+1}}\right)\right]' \geq \frac{2}{r}\left(\bI(r)-I_0\right) - C\, 
\frac{\mathcal{E}_{OV}(r)}{\bH(r)}\, .
\end{equation}
Using that $|\bI (r) - I_0|\leq C r^\lambda$, for $r$ small enough we have the bound $r \bD (r)\leq 2 I_0 \bH (r)$. 
Hence we can use \eqref{e:bSigma} in the integrals of \eqref{e:log H' 1} and \eqref{e:log H'2} to deduce the existence of the limit
\[
H_0:=\lim_{s\downarrow0}\frac{\bH(s)}{s^{2I_0+1}},
\quad\text{with}\quad
\left|\frac{\bH(r)}{r^{2I_0+1}}-H_0\right|\leq C\,r^\lambda.
\]
Moreover, from \eqref{e:log H' 1} we also infer that for $r$ sufficiently small
\[
H_0 \geq \frac{\bH(r)}{r^{2I_0+1}} 
e^{-C\,r^{\lambda}} >0.
\]
Finally the last assertion follows simply setting $D_0:= I_0 \cdot H_0$ and from 
\begin{align*}
\left\vert\frac{\bD(r)}{r^{2\,I_0}} - D_0\right\vert &= \left\vert\bI(r) \, \frac{\bH(r)}{r^{2\,I_0+1}} - I_0\, 
H_0\right\vert\notag\\
& \leq \left\vert\bI(r) - I_0\right\vert\, \frac{\bH(r)}{r^{2\,I_0+1}}
+ I_0\,\left\vert\frac{\bH(r)}{r^{2\,I_0+1}} - H_0\right\vert \leq C\,r^\lambda.\qedhere
\end{align*}

\section{Blow-up and proof of Theorem \ref{t:bu}}\label{s:bu}

As a consequence of the decay estimate in Theorem~\ref{t:decay}
we can show that suitable rescaling of the normal approximation $N$
converge to a unique limiting profile.
To this aim we consider for every $r \in (0,1)$ the functions $f_r : \de B_1 \to \I{Q_1}(\R^{2+n})$
given by 
\[
f_r(z,w) := \frac{\NN(i_r(z,w))}{r^{I_0}},
\]
where we recall that $i_r(z,w)=\left(rz, r^{\sfrac{1}{\bar Q}}w\right)$.
We recall also that $T_0\cM = \R^2 \times \{0\}$, and $T_0\Sigma = \R^{2} \times \R^{\bar n} \times \{0\}$. In the following, with a slight abuse of notation, we write $\R^{\bar n}$ for the subspace 
$\{0\} \times \R^{\bar n} \times \{0\}$.

The final step in the proof of Theorem \ref{t:bu} is then the following proposition.

\begin{proposition}\label{p:unique limit}
Assume alternative (i) in Theorem \ref{t:bu} fails and let $I_0$ and $\lambda$ be the positive numbers of Theorem \ref{t:decay}.
Then $I_0>1$ and there exists a function $f_0:\de B_1 \to \I{Q}(\R^{\bar n})$
such that
\begin{itemize}
\item[(i)] $\etaa \circ f_0 =0$ and $f_0 \not\equiv Q_1\a{0}$;
\item[(ii)] for every $r$ sufficiently small
\begin{equation}\label{e:unique limit}
\cG(f_r(z,w),f_0(z,w))\leq C\, r^{\sfrac{\lambda}{16}}\,\quad \forall\;(z,w)\in \de B_1\,;
\end{equation}
\item[(iii)] the $I_0$-homogeneous extension $g (z,w):=|z|^{I_0}f_0\left(\textstyle{\frac{z}{|z|},\frac{w}{|w|}}\right)$ is nontrivial and $\D$-minimizing.
\end{itemize}
In particular, by \textup{(iii)} $\im(g) \setminus \{0\} \subset\R^{2+n}$ is a real analytic submanifold.
\end{proposition}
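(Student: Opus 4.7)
The proof will follow the standard blow-up scheme of Almgren--Chang, split into compactness, identification of the limit, quantitative uniqueness, and the strict inequality $I_0 > 1$. I set $f_r(z,w) := r^{-I_0}\NN(i_r(z,w))$ and regard the family $\{f_r\}_{r>0}$ on $\partial B_1 \subset \gira_{\bar Q}$.

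\textbf{Step 1: Compactness.} The rescaled Lipschitz estimate \eqref{e:Lip_N} together with $\bD(r) \leq C r^{2I_0}$ and $\bH(r) \sim H_0 r^{2I_0+1}$ from Theorem~\ref{t:decay} give uniform Lipschitz and $L^2$ bounds for $f_r|_{\partial B_1}$, independent of $r$. Arzel\`a--Ascoli yields subsequential $C^0$ limits $f_r \to f_0$, with $f_0$ Lipschitz on $\partial B_1$. Nontriviality is automatic since $\lim r^{-(2I_0+1)}\bH(r)=H_0>0$ forces $\|f_0\|_{L^2(\partial B_1)}^2 = H_0$.

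\textbf{Step 2: Properties of the limit.} From \eqref{e:media_pesata} and the Poincar\'e inequality one gets $\int_{B_r}|z|^{\gamma_0-1}|\etaa\circ\NN| \leq C r^{2I_0+\eta}$, which after rescaling yields $\etaa\circ f_0 \equiv 0$. The orthogonality $\NN_i(z,w)\perp T_{\Psii(z,w)}\cM$ combined with the $C^1$ estimates for $\Psii$ in Theorem~\ref{t:cm}(i) forces the limit to take values in the fibre of the normal bundle at $0$; since $\NN$ lies in $\Sigma$ this fibre is $T_0\cM^\perp\cap T_0\Sigma \cong \{0\}\times\R^{\bar n}\times\{0\}$, proving the asserted structure $f_0 = \sum_i \a{(0,\bar f_{0,i},0)}$. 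Homogeneity: the outer variation \eqref{e:OV} combined with $|\bI(r)-I_0|\leq C r^{\lambda}$ and $|\bL(r)|\leq Cr\bD(r)$ forces, after rescaling and passing to the limit, the identity $\bE_g(\rho)=I_0 \bH_g(\rho)/\rho$ for the $I_0$-homogeneous extension $g$, which is precisely the equation of $I_0$-homogeneity. $\D$-minimality is obtained by testing the almost-minimizing inequality \eqref{e:amin} against competitors of the form $\LL(z,w) := r^{I_0}\LL_0(i_r^{-1}(z,w))$, with $\LL_0$ any competitor of $f_0$: all error terms $\textup{Err}_1, \textup{Err}_2$, the $\bmo^{1/2}$ factor, and $r^2\bD'$ scale with positive powers of $r$ after dividing by $r^{2I_0}$.

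\textbf{Step 3: Uniqueness and rate.} This is the technical core and the main obstacle. The strategy is to upgrade the qualitative convergence to quantitative convergence by integrating the radial derivative. From Lemma~\ref{l:H'} and Theorem~\ref{t:decay},
\[
\left[\log \tfrac{\bH(r)}{r^{2I_0+1}}\right]' = \tfrac{2}{r}(\bI(r) - I_0) + O\!\left(\tfrac{\mathcal{E}_{OV}(r)}{\bH(r)}\right),
\]
which, combined with $|\bI-I_0|\leq C r^\lambda$ and Corollary~\ref{c:integrability}, is summable against $dr/r$. Applied to the $L^2$-norm of $\partial_r (r^{-I_0}\NN)$ on concentric circles, and using the frequency identity $r\bE(r)=I_0\bH(r)+o(\bH(r))$, this yields a Cauchy estimate in $L^2(\partial B_1)$:
\[
\|f_r - f_s\|_{L^2(\partial B_1)} \leq C r^{\lambda/2} \qquad\text{for } s\leq r \text{ small}.
\]
Interpolating with the uniform Lipschitz bound gives the $C^0$ rate \eqref{e:unique limit} with exponent $\lambda/16$ (the precise exponent coming from interpolation). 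In particular the limit $f_0$ is unique.

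\textbf{Step 4: Strict inequality $I_0>1$ and conclusion.} By Almgren's theory applied to the $\D$-minimizer $g$, $I_0 \geq 1$. If $I_0 = 1$, then $g$ would decompose into simple affine pieces; combined with $\etaa\circ g = 0$, the uniform closeness \eqref{e:unique limit}, and the almost-minimality of $T$, this would force $T$ to coincide with $\a{\cM}$ (up to multiplicity $Q$) in a neighbourhood of $0$, i.e.\ $\NN\equiv Q\a{0}$ on some $B_s$, contradicting the failure of alternative (i). Finally, real analyticity of $\im(g)\setminus\{0\}$ is a direct consequence of the regularity theory for $\D$-minimizers with no branch points away from the vertex (cf.\ \cite{DS1}), since the $b$-separation and $I_0$-homogeneity preclude branching away from $0$.

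The principal obstacle is Step 3: the frequency is only \emph{almost} monotone, and converting the decay \eqref{e:decay} into a genuine Cauchy-in-$C^0$ bound with explicit rate requires careful tracking of the error terms $\mathcal{E}_{IV}, \mathcal{E}_{OV}$ through the integration in $r$, for which Corollary~\ref{c:integrability} is decisive.
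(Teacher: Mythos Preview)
Your overall architecture is right, but there is a genuine gap in Steps~1 and~3, and your Step~4 takes a harder route than necessary.

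\textbf{The Lipschitz/interpolation gap.} You assert that \eqref{e:Lip_N} gives uniform Lipschitz bounds for $f_r|_{\partial B_1}$. It does not. The chain rule gives $\Lip(f_r|_{\partial B_1}) \leq r^{1-I_0}\Lip(\NN|_{B_r})$, and \eqref{e:Lip_N} only yields $\Lip(\NN|_{B_r}) \leq C\bLambda(r)^{\eta_0} \leq C r^{2I_0\eta_0}$. Since $I_0>1$ and $\eta_0$ is small, the exponent $1-I_0+2I_0\eta_0$ is negative, so $\Lip(f_r)$ blows up like $r^{-(I_0-1)(1-o(1))}$ as $r\downarrow 0$. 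Consequently the Arzel\`a--Ascoli step in your compactness argument is not available, and more importantly the naive interpolation in your Step~3 fails: with $\|f_r-f_{r/2}\|_{L^2}^2\leq Cr^\lambda$ and $\Lip(f_r-f_{r/2})\leq Cr^{-\alpha}$ for some $\alpha\approx I_0-1$, the Gagliardo--Nirenberg bound $\|u\|_\infty^3\leq C\|u\|_{L^2}^2\Lip(u)$ gives $\|f_r-f_{r/2}\|_\infty^3\leq C r^{\lambda-\alpha}$, which need not decay (nothing forces $\lambda>\alpha$). The paper confronts exactly this difficulty: it records the blow-up $\Lip(h|_{B_1\setminus B_s})\leq Cs^{-I_0}$ in \eqref{e:lip_diff} and replaces interpolation by a Campanato-type iteration (Step~2 of the paper's proof), using the \emph{local energy} estimate \eqref{e:en_diff} on small balls $B_\rho(z,w)$ together with telescoping over dyadic averages. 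This is the key missing idea.

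\textbf{The bound $I_0>1$.} Your Step~4 is both overcomplicated and not fully justified (the claim that $I_0=1$ forces $\NN\equiv Q\a{0}$ is not argued). The paper's proof is a one-liner: by \eqref{e:Ndecay} one has $\bH(r)\leq C r^{3+\gamma_0}$, and Theorem~\ref{t:decay} gives $\bH(r)\sim H_0 r^{2I_0+1}$; comparing exponents yields $I_0\geq 1+\gamma_0/2>1$. No blow-up analysis is needed for this.

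Your treatment of $\etaa\circ f_0=0$, of the target $\R^{\bar n}$, and the outline of the $\D$-minimality argument are in the right spirit and close to the paper's Steps~3--4.
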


Theorem \ref{t:bu} follows immediately from Proposition \ref{p:unique limit} and Theorem \ref{t:decay}.

\begin{proof}[Proof of Theorem \ref{t:bu}]
Since we have identitified $\R^{\bar n}$ with $\{0\}\times \R^{\bar n}\times \{0\}$, it is obvious that the map $g$ has all the properties claimed in (ii), namely it is $\D$-minimizing, $\etaa\circ g \equiv 0$ and it is nontrivial. \eqref{e:uniform convergence} is a corollary of \eqref{e:unique limit} provided $a_0 \leq \frac{\lambda}{16}$. Next note that \eqref{e:Poincare0} has been shown in Theorem \ref{t:poincare}. As for \eqref{e:energia N} observe that, if $4\rho\leq r < 1$, then,  by Theorem \ref{t:decay},
\[
D_0 (r- 2\rho)^{2I_0} - C (r-2\rho)^{2I_0 + \lambda} \leq \bD (r - 2\rho) \leq \bD (r+2\rho) \leq D_0 (r+2\rho)^{2 I_0} + C (r+2\rho)^{2I_0 + \lambda}\, .
\]
Since $2 I_0 >2$, \eqref{e:energia N} follows easily from 
\[
\int_{B_{r+2\rho}\setminus B_{r-2\rho}} |D\NN|^2 = \bD (r+2\rho) - \bD (r-2\rho)\, ,
\]
provided $a_0 \leq \lambda$.
\end{proof}

The rest of this final section of the note is devoted to the proof Proposition \ref{p:unique limit}, which is split in several steps. Before starting with it, let us however observe that the conclusion $I_0 >1$ is an obvious consequence of the decay estimates of Theorem \ref{t:decay} and the fact that $\bD (r) \leq C r^{2+2\gamma_0}$. 

\subsection{Step 1: uniqueness of the limit $f_0$} For $r$ sufficienly small and $s \in [\frac{r}{2}, r]$, we
start estimating the following quantity:
\begin{align}\label{e:stima differenza 1}
\int_{\de B_1}\cG(f_r, f_s)^2  \leq 
(r-s) \int_{\de B_1}\int_s^r\left\vert \frac{d}{dt}f_t(z,w)\right\vert^2\,dt\, .
\end{align}
Using the differentiability properties of Lipschitz multiple valued functions
and the $1$-dimensional theory in \cite[Section~1.1.2]{DS1} (note that $t\mapsto \NN(i_t(z,w))$ is a Lipschitz map),
we easily infer that
\begin{align*}
\left\vert \frac{d}{dt}f_t(z,w)\right\vert^2 & =
\sum_{j=1}^{Q}
\left\vert
\frac{D\NN\,_j(i_t(z,w))\cdot z}{t^{I_0}} - I_0\,\frac{\NN\,_j(i_t(z,w))}{t^{I_0+1}}
\right\vert^2\notag\\
& = \frac{|z|^2|\de_{\hat r}\NN|^2(i_t(z,w))}{t^{2I_0}} - 
2\,I_0\,\frac{|z|}{t^{2I_0+1}}\,\sum_{j=1}^{Q}\langle\de_{\hat r}\NN\,_j, \NN\,_j \rangle(i_t(z,w)) +
\frac{|\NN|^2(i_t(z,w))}{t^{2I_0+2}}.
\end{align*}
Therefore, by the change of variable $(z',w') = i_t(z,w)$ in \eqref{e:stima differenza 1} we infer that
\begin{align*}
\int_{\de B_1}&\cG(f_r, f_s)^2 \leq
\frac{r}{2}\int_{\sfrac{r}{2}}^r 
\left(\frac{\bG(t)}{t^{2I_0+1}} - 2\,I_0 \frac{\bE(t)}{t^{2I_0+2}} + I_0^2\,\frac{\bH(t)}{t^{2I_0+3}}\right)\,dt\notag\\
&\leq \frac{r}{2} \int_{\sfrac{r}{2}}^r 
\left(\frac{\bD'(t)}{2t^{2I_0+1}} - 2\,I_0 \frac{\bD(t)}{t^{2I_0+2}} + I_0^2\,\frac{\bH(t)}{t^{2I_0+3}}
+C\,\frac{\mathcal{E}_{IV}(t)}{t^{2I_0+1}} + C\,\frac{\mathcal{E}_{OV}(t)}{t^{2I_0+2}} 
\right)\,dt\notag\\
&= \frac{r}{2} \int_{\sfrac{r}{2}}^r
\left[\frac{1}{2t}\left(\frac{\bD(t)}{t^{2I_0}}\right)' + I_0 \frac{\bH(t)}{t^{2I_0+3}}\,\left(I_0-\bI(t)\right)
+C\,\frac{\mathcal{E}_{IV}(t)}{t^{2I_0+1}} + C\,\frac{\mathcal{E}_{OV}(t)}{t^{2I_0+2}}
\right]\,dt.
\end{align*}
Using Theorem~\ref{t:decay}, we can then conclude that
\begin{align}\label{e:stima differenza 3}
\int_{\de B_1}\cG(f_r, f_s)^2\,
&\leq C\,\left\vert\frac{\bD(r)}{r^{2I_0}}
- \frac{\bD\left(\frac{r}{2}\right)}{\left(\frac{r}{2}\right)^{2I_0}}\right\vert
+C\,\int_{\sfrac{r}{2}}^r\left[\frac{|I_0-\bI(t)|}{t}
+C\,\frac{\mathcal{E}_{IV}(t)}{\bD(t)} + C\,\frac{\mathcal{E}_{OV}(t)}{t\,\bD(t)} 
\right]\,dt\notag\\
&\leq C\, r^{\lambda}.
\end{align}
By an elementary dyadic argument analogous to that of \cite[Theorem 5.3]{DS1}, we then infer
the existence of $f_0:\de B_1 \to \I{Q}(\R^{2+n})$
such that, for $r$ sufficiently small,
\begin{equation}\label{e:unique limit 2}
\|\cG(f_r, f_0)\|_{L^2(\de B_1)}^2\leq C\,  r^{\lambda}.
\end{equation}

\subsection{Step 2: uniform convergence} Set next $h(z,w):=\cG\left(\frac{\NN(z,w)}{|z|^{I_0}},\frac{\NN(i_{1/2}(z,w))}{|\sfrac z2|^{I_0}}\right)$. It follows from \eqref{e:stima differenza 3} that for $r$ sufficiently small
\begin{equation}\label{e:diff_height}
\int_{B_r} h^2\leq \int_0^r\,\int_{\de B_1} \cG(f_t,f_{\sfrac t2})^2\,t\,dt\stackrel{\eqref{e:stima differenza 3}}{\leq} C \,r^{2+\lambda}\,,
\end{equation}
and from \eqref{e:Ndecay} and \eqref{e:Lip_N}
\begin{equation}\label{e:lip_diff}
\Lip(h|_{B_1\setminus B_s})\leq C\, s^{-I_0}.
\end{equation}
Moreover, for every $\rho < \sfrac{|z|}{4}$ we claim the estimate
\begin{gather}\label{e:en_diff}
\int_{B_{\rho}(z,w)}|Dh|^2\leq C\, \rho + C\,|z|^{\lambda}\, .
\end{gather}
Indeed $|Dh| \leq C\, \left|D\left(\frac{\NN}{|z|^{I_0}}\right)\right|$
and by Theorem~\ref{t:decay}
\begin{align*}
\int_{B_{\rho}(z,w)}\left|D\left(\frac{\NN}{|z|^{I_0}}\right)\right|^2
& \leq 2\, \int_{|z|-\rho}^{|z|+\rho} \int_{\de B_t} \left(\frac{|D\NN|^2}{t^{2I_0}}+I_0^2 \, \frac{|\NN|^2}{t^{2I_0+2}} \right)\,dt\\
&\leq 2\int_{|z|-\rho}^{|z|+\rho} \left(\left(\frac{\bD(t)}{t^{2I_0}}\right)'+2\,I_0\frac{\bD(t)}{t^{2I_0+1}}+I_0^2\, \frac{\bH(t)}{t^{2I_0+2}}\right)\,dt\\
&\leq C\, \left(|z|+\rho \right)^\lambda + C \log\left( \frac{|z|+\rho}{|z|-\rho} \right)\leq C \,|z|^\lambda + C\,\frac{\rho}{|z|}.
\end{align*}
In particular, applying \eqref{e:diff_height}, \eqref{e:lip_diff} and \eqref{e:en_diff} with $\rho = |z|^{1+\sfrac{\lambda}{4}}$,
we infer that for every point $p = (z,w)\in \gira_{\bar{Q}}$ with $|z|$ sufficiently small
\begin{align}
h(p) & \leq \Bigg\vert h(p) - \mint_{B_{\frac{|z|^{1+\sfrac{\lambda}{4}}}{2^k}} (p)} h\Bigg\vert
+ \sum_{i=0}^{k-1}\Bigg\vert \mint_{B_{\frac{|z|^{1+\sfrac{\lambda}{4}}}{2^i}} (p)} h- \mint_{B_{\frac{|z|^{1+\sfrac{\lambda}{4}}}{2^{i+1}}} (p)} h\Bigg\vert + \mint_{B_{|z|^{1+\sfrac{\lambda}{4}}}(p)} h\notag\\
&\leq \Lip(h|_{B_1 (p) \setminus B_{\sfrac{|z|}{2}} (p)}) \frac{|z|^{1+\sfrac{\lambda}{4}}}{2^k} + C\sum_{i=0}^{k-1}\frac{|z|^{1+\sfrac{\lambda}{4}}}{2^i} \mint_{B_{\frac{|z|^{1+\sfrac{\lambda}{4}}}{2^i}} (p)} |Dh| + \mint_{B_{|z|^{1+\sfrac{\lambda}{4}}} (p)} h\notag\\
&\stackrel{\eqref{e:lip_diff}}{\leq}
C\, |z|^{1+\sfrac{\lambda}{4}} + C\,\sum_{i=0}^{k-1}\,\left(\int_{B_{|z|^{1+\sfrac{\lambda}{4}}}} |Dh|^2\right)^{\frac12}
+ \frac{C}{|z|^{1+\sfrac{\lambda}{4}}}\left(\int_{B_{2|z|}} |h|^2\right)^{\frac12}\, ,
\end{align}
where we have used the standard Poincar\'e inequality
\[
\left\vert \mint_{B_r} f - \mint_{B_{\frac{r}{2}}} f\right\vert \leq C\, r\,\mint_{B_r}|Df|\,\quad f\in W^{1,2}.
\]
Now choose $k \in \N$ such that
$\frac{|z|^{1+\sfrac{\lambda}{4}}}{2^k} < |z|^{1+\sfrac{\lambda}{4}+I_0} \leq \frac{|z|^{1+\sfrac{\lambda}{4}}}{2^{k-1}}$ (in particular $k\leq |\log|z||$) and use \eqref{e:diff_height} together with \eqref{e:en_diff} to bound
\begin{equation}
h (z,w) \leq  C\,|z|^{1+\sfrac{\lambda}{4}} + C\,|\log |z||\,|z|^{\sfrac{\lambda}{8}} + C\,|z|^{\sfrac{\lambda}{4}}
\leq C\,|z|^{\sfrac{\lambda}{16}}\,,
\end{equation}
This gives that, for a sufficiently small $r$, 
\[
\max_{\partial B_1} \cG (f_r, f_{r/2}) \leq C r^{\sfrac{\lambda}{16}}\, . 
\]
Thus
\[
\max_{\partial B_1} \cG (f_r, f_0) \leq \sum_{k=0}^\infty \cG (f_{r2^{-k}}, f_{r2^{-k-1}}) \leq C r^{\sfrac{\lambda}{16}}\, .
\]

\subsection{Step 3: nontriviality of the limit and other properties} To show that $f_0 \neq Q\a{0}$ it is enough to observe that,
by Theorem \ref{t:decay},
\[
\int_{\de B_1} |f_0|^2=\lim_{r\to 0} \int_{\de B_1} |f_r|^2 =\lim_{r\to 0}\frac{\bH(r)}{r^{2I_0+1}}=H_0> 0.
\]
In order to show that $\etaa\circ f_0 \equiv 0$, we notice that by a simple
slicing argument combined with \eqref{e:media_pesata}
 there exists a sequence of radii $r_k \in [2^{-k-1}, 2^{-k}]$
such that
\begin{align}
\int_{\de B_{r_k}} |\etaa \circ \NN| &\leq 2^{k+1} \int_{B_{2^{-k}}\setminus B_{2^{-k-1}}}
|\etaa \circ \NN| \leq C\, r_k^{\gamma_0}\int_{B_{2^{-k}}}|z|^{\gamma_0-1}|\etaa \circ \NN|\notag\\
&\leq C\, r_k^{\gamma_0+2\eta}\bD(2r_k) \leq C\, r_k^{\gamma_0+2\eta + 2I_0},
\end{align}
from which
\begin{align*}
\int_{\de B_1} |\etaa\circ f_0|
&=\lim_{r_k\to0}\int_{\de B_1} |\etaa\circ f_{r_k}|=
\lim_{r_k\to 0}r_k^{-I_0-1}\int_{\de B_r}
|\etaa\circ \NN|\\
& \leq C\,\lim_{r_k\to 0} r_k^{\gamma_0+2\eta+I_0-1} =0.
\end{align*}
Next we show that $f_0$ takes values in $\R^{\bar n}$.
We start by showing that $f_0$ must take values in $T_0\Sigma=\R^{2+\bar{n}}\times \{0\}$. 
Indeed, if we set $f_r (z,w) := \bar\NN (i_r (z,w))$, using \eqref{e:P1} and $|\NN|(i_r(z,w)) \leq C\, r^{1+\sfrac{\gamma_0}{2}}$ we conclude
\[
\int_{\de B_1} \cG(f_r,\bar f_r)^2\leq \frac{Cr^2}{r^{2I_0+1}}\,\int_{\de B_r} |\NN|^2\leq C r^2 \,,
\]
which implies that $f_0(z,w)\in \I{Q}(T_0\Sigma)$.

Next observe that $f_r (z,w) = \sum_i \a{\NN\,_i (i_r (z,w))}$ has the property that each $\NN\,_i (i_r (z,w))$ is orthogonal to $T_{\Psii (i_r (z,w))} \cM$. In particular, if $|z|=1$ and $r\downarrow 0$, the tangent planes $T_{\Psii (i_r (z,w))} \cM$ converge to $\R^2\times \{0\}$: it follows, by the uniform convergence of $f_r$ to $f_0$, that $f_0 (z,w) = \sum_i \a{(f_0)_i (z,w)}$ for some $(f_0)_i (z,w)$ which are orthogonal to $\R^2\times \{0\}$. We thus conclude that each $(f_0)_i (z,w)$ belongs to $\{0\}\times \R^{\bar n}\times \{0\}$.

\subsection{Step 4: Minimality of $g$} In order to complete the proof of Proposition \ref{p:unique limit} we need to show that $g$ is $\D$-minimizing. Given the homogeneity of $g$ in the radial direction, it suffices to show that there is no $W^{1,2}$ multifunction $h: B_1 \to \Iq (\R^{\bar{n}})$ which has the same trace of $g$ on $\partial B_1$ and less energy on $B_1$. Assume thus by contradiction that there is an $h\in W^{1,2} (B_1, \Iq (\R^{\bar n}))$ such that $h|_{\partial B_1}$ and 
\begin{equation}
\int |Dh|^2 \leq \int |Dg|^2 - \delta
\end{equation}
for some positive $\delta>0$.
Recall the definition of $W^{1,2}$ according to Remark \ref{r:W12}: using the map $\bW$ in there and the functions $h\circ \bW$ and $g\circ \bW$ we can use the theory of \cite{DS1} and assume that $h\circ \bW$ is a $\D$-minimizer on the euclidean disk $D_1 \subset \R^2$.
Observe also that, since $\etaa\circ g \equiv 0$, we must have $\etaa\circ h \equiv 0$ as well. Indeed since $h\circ \bW=g\circ \bW$ on $\de D_1$, we have $\etaa\circ h\circ \bW =\etaa\circ g\circ \bW=0$ on the boundary and considering that
\[
\int_{D_1} \sum_i |D(h_i \circ \bW -\etaa\circ h\circ \bW)|^2\leq \int_{D_1}|D (h\circ \bW)|^2-Q\int_{D_1}|D (\etaa\circ h\circ \bW)|^2\, ,
\]
the minimality of $h\circ \bW$ forces the Dirichlet energy of $\etaa\circ h \circ \bW$ to vanish identically.

Using \eqref{e:media_pesata}, the decay $\bD(r)\leq C\,r^{2I_0}$ and a Fubini-type argument we can find a sequence of radii $s_j\to 0$ such that
\begin{equation}\label{e:bounds}
\int_{\de B_1}|Df_{0}|^2\leq \limsup_j \int_{\de B_1}|Df_{s_j}|^2\leq \limsup_j \frac{\bD'(s_j)}{s_j^{2I_0-1}}\leq C\,.
\end{equation}

We now wish to ``smooth'' $h$, i.e. to approximate it with a sequence of Lipschitz maps $h_\eps$ such that $\etaa\circ h_\eps \equiv 0$,
\begin{gather}
\int_{B_1} |Dh_\eps|^2-|Dh|^2\leq \eps^2 \label{e:est_B1}\\
\int_{\de B_1}\cG(f_0,h_{\eps})^2+\Bigg|\int_{\partial B_1} |Df_0|^2 - |Dh_{\eps}|^2\Bigg|\leq \eps^2 \label{e:est_dB1}\,.
\end{gather}
We would like to appeal to \cite[Lemma 3.5]{DS3}, but there is the slight technical complication that $\gira$ is not regular. We postpone this technical step and continue with the argument assuming the existence of the approximations $h_\eps$.

Next we would like to apply \cite[Lemma 3.6]{DS3} to $h_\eps$ and $\p_{T_0\Sigma }(f_{s_j})=:\bar f_{s_j}$, 
to get a family of competitor functions $(\hat f_{s_j})\subset W^{1,2}(B_1,\I{Q}(\R^{2+\bar n}))$, 
such that $\hat f_{s_j}|_{\de B_1}= \bar f_{s_j} |_{\de B_1})$ and
\begin{gather}
\int_{B_1} |D\hat f_{s_j}|^2\leq \int_{B_1} |Dh_\eps|^2+\eps \int_{\de B_1} \left(|D_\tau h_\eps|^2 +|D_\tau \bar f_{s_j}|^2\right)+\frac{C}{\eps} \int_{\de B_1} \cG(h_\eps,\bar f_{s_j})^2\label{e:est_en_B1}\,,\\
\Lip(\hat f_{s_j}) \leq C\left(\Lip(h_\eps)+\Lip(\bar f_{s_j})+\frac{1}{\eps}\sup_{\de B_1}\cG(\bar f_{s_j},h_\eps)\right)\label{e:est_Lip_B1}\\
\etaa\circ \hat{f}_{s_j} = \etaa\circ \bar{f}_{s_j}
\label{e:est_med_B1_2}\,.                                                  
\end{gather}  
Again, this is not straightforward because \cite[Lemma 3.6]{DS3} is stated for euclidean domains. We postpone this second technical
problem and continue with our argument assuming the existence of $\hat{f}_{s_j}$. 

We are now ready to define our competitor function. We set $\bar \LL_{s_j} (z,w):=s_j^{I_0} \hat f_{s_j} (i_{\sfrac{1}{s_j}}(z,w))$ and, observing that $\bar \LL_{s_j}$ takes value in $\Iq (T_0 \Sigma)$, we use \eqref{e:relazione} to define a corresponding $\LL_{s_j}$, which clearly is a competitor 
$\NN$ in $B_{s_j}$ according to Definition \ref{d:competitors}. Moreover 
\[
\Lip (\LL_{s_j})\leq C \, s_j^{I_0+1} \Lip(\hat f_{s_j}|_{B_1})\stackrel{\eqref{e:lip_diff}}{\leq} C\, {s_j}^{\eta}\,. 
\] 
Therefore we can apply Proposition \ref{p:amin} with $\bar \LL=\bar \LL_{s_j}$. In particular, taking into account Theorem \ref{t:decay} and \eqref{e:bounds}, we conclude that
\[
\int_{B_{s_j}} |D\bar \NN|^2 \leq (1+ Cs_j) \int_{B_{s_j}} |D\bar\LL_{s_j}|^2 + C \bmo^{\sfrac{1}{2}} \int_{B_{s_j}} |z|^{\gamma_0-1} |\etaa\circ \LL_{s_j}| + C s_j^{2I_0+\eta}\, .
\]
Next, recall the inequality \eqref{e:dettagliuccio}:
\[
\int_{B_{s_j}} |z|^{\gamma_0-1} |\etaa\circ \LL_{s_j}| \leq C \int_{B_{s_j}} |z|^{\gamma_0-1} |\etaa\circ \bar \LL_{s_j}|
+ C \int_{B_{s_j}} |z|^{\gamma_0-1} |\bar \LL_{s_j}|^2\, .
\]
By \eqref{e:est_med_B1_2} the first term in the right hand side equals indeed
\[
C \int_{B_{s_j}} |z|^{\gamma_0-1} |\etaa\circ \bar \NN| \leq C s_j^\eta \bD (s_j) \leq C s_j^{2I_0+\eta}\, .
\]
For the second term we use the Poincar\'e inequality
\begin{equation}\label{e:ancora_poinc}
\int_{B_{s_j}} |z|^{\gamma_0-1} |\bar \LL_{s_j}|^2 \leq C s_j^{1+\gamma_0}  \int_{B_{s_j}} |D\bar \LL_{s_j}|^2 
+ C s_j^{\gamma_0} \int_{\partial B_{s_j}} |\bar \LL_{s_j}|^2\, ,
\end{equation}
whose proof will be given in Lemma \ref{l:poinc}.

Using that 
\[
\int_{\partial B_{s_j}} |\bar \LL_{s_j}|^2 = \int_{\partial B_{s_j}} |\bar \NN|^2 =\bH (s_j) \leq C s_j^{2I_0+1}\, ,
\]
we easily conclude that
\begin{equation}\label{e:ci_siamo_quasi}
\int_{B_{s_j}} |D\bar \NN|^2 \leq (1+ Cs_j) \int_{B_{s_j}} |D\bar\LL_{s_j}|^2 + C s_j^{2I_0+\eta}\, .
\end{equation}
Changing variables and dividing by $s_j^{2I_0}$ we infer that
\begin{equation}\label{e:quasi_2}
\int_{B_1} |D \bar{f}_{s_j}| \leq \int_{B_1} |D \hat{f}_{s_j}|^2 + C s_j^\eta\, . 
\end{equation}
Using \eqref{e:est_B1}, \eqref{e:est_dB1} and \eqref{e:est_en_B1}, we conclude
\begin{align*}
\int_{B_1} |D \bar{f}_{s_j}|^2 \leq & \int_{B_1} |Dh|^2 + C s_j^\eta + C \varepsilon + \frac{C}{\varepsilon} \int_{\partial B_1} \cG (f_0, \bar f_{s_j})^2\\
\leq & \int_{B_1} |Dg|^2 -\delta + C s_j^\eta + C \varepsilon + \frac{C}{\varepsilon} \int_{\partial B_1} \cG (f_0, \bar f_{s_j})^2\, ,
\end{align*}
where the constant $C$ is independent of $\varepsilon$. In particular, if we fix $\varepsilon$ sufficiently small and we then let $s_j\downarrow 0$, by the uniform convergence of $f_{s_j}$ to $f_0$ on $\partial B_1$ we conclude 
\[
\limsup_{j\to\infty} \int_{B_1} |D\bar f_{s_j}|^2 \leq \int_{B_1} |Dg|^2 - \frac{\delta}{2}\, .
\]
Since however $f_{s_j} \to g$ in $B_1$, the latter inequality contradicts the semicontinuity of the Dirichlet energy.

\subsection{Step 5: Technical leftovers} First of all we show the existence of the map $h_\eps$ as in \eqref{e:est_B1} and \eqref{e:est_dB1}. We consider $h\circ \bW$, which is defined on the closed unit disk $\bar{D}_1\subset \R^2$. 
We then can apply \cite[Lemma 3.5]{DS3} to the latter map and generate approximations $\hat h_\varepsilon$ which satisfy
the bounds \eqref{e:est_B1} and \eqref{e:est_dB1} with $D_1$ in place of $B_1$ and $h\circ \bW$ in place of $h$. The maps
$h_\varepsilon:= \hat h_\eps \circ \bW$ would then satisfy the desired estimates because of the conformality of $\bW^{-1}$ (which keeps the Dirichlet energy invariant) and its regularity in $B_1\setminus \{0\}$ (which results into the loss of a constant factor in \eqref{e:est_dB1}). However the resulting map would not be Lipschitz because of the singularity of $\bW^{-1}$ in the origin. To overcome this difficulty it suffices to perturb slightly $\hat h_\varepsilon$ so that it is constant in a small neighborhood of the origin. As for the condition $\etaa\circ h_\eps\equiv 0$, this can easily be achieved subtracting the average to whichever extension satisfies \eqref{e:est_B1} and \eqref{e:est_dB1}.

Secondly we show the existence of $\hat{f}_{s_j}$. First of all we observe that the condition \eqref{e:est_med_B1_2} can be easily achieved after we prove the existence of a map which satisfies the other two conditions: as above it suffices to subtract the average
of this map and add back $\etaa\circ \bar{f}_{s_j}$. At this point we observe that it suffices, as above, to compose with the map $\bW$,
apply \cite[Lemma 2.14]{DS1} and \cite[Lemma 3.6]{DS2} and compose the resulting map with $\bW^{-1}$: indeed the latter would coincide with $h_\varepsilon \circ \bW$ on $D_{1-\eps}$ and on the complement $\bW^{-1}$ is regular.

\appendix

\section{Some useful lemmas.} \label{a:proiezione}

The first lemma is a simple version of the Poincar\'e inequality for $W^{1,2}$ functions.

\begin{lemma}\label{l:poinc} There exists a universal constant $C>0$ such that the following two inequalities hold
for every $f\in W^{1,2}(B_r, \Iq)$ with $B_r\subset \gira_{Q}$:
\begin{align}
\int_{B_r} |f|^2\leq C r^2 \,\int_{B_r} |Df|^2+C r\,\int_{\de B_r}|f|^2 \label{e:poinc1}\\
\int_{B_r} |z|^{\gamma_0-1} |f|^2 \leq C\,r^{1+\gamma_0} \int_{B_r} |D f|^2 + C\,r^{\gamma_0} \int_{\partial B_1} |f|^2\, .\label{e:poinc2}
\end{align}
\end{lemma}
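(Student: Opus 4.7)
The plan is to exploit the polar structure of $\gira_Q$: outside the origin, $\gira_{Q}$ is a flat Riemannian manifold isometric, in ``polar coordinates'' $(\rho,\theta)$ with $\rho\in (0,r)$ and $\theta\in [0,2\pi Q)$, to the cone $d\rho^{2}+\rho^{2}d\theta^{2}$, and $|z|=\rho$ is just the geodesic distance from $0$. The area element is $\rho\,d\rho\,d\theta$ and the arclength element on $\partial B_r$ is $r\,d\theta$, so in particular
\[
\int_{\de B_r}|f|^2 = r\int_{0}^{2\pi Q}|f(r,\theta)|^2\,d\theta,
\]
and $\int_{B_r}|Df|^2 \geq \int_{0}^{2\pi Q}\int_0^r |\partial_\rho f|^2(s,\theta)\,s\,ds\,d\theta$ by decomposing $|Df|^2=|\partial_\rho f|^2+\rho^{-2}|\partial_\theta f|^2$ as in \cite{DS1}.

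The key one--dimensional input is the fundamental theorem of calculus for $W^{1,2}$ multivalued functions (see \cite[Section 1.1.2]{DS1}): for a.e.\ $\theta\in [0,2\pi Q)$ the slice $s\mapsto f(s,\theta)$ lies in $W^{1,2}((0,r),\I_Q)$ and
\[
\cG\bigl(f(\rho,\theta),f(r,\theta)\bigr) \leq \int_\rho^r |\partial_s f|(s,\theta)\,ds,
\]
so by Cauchy--Schwarz
\begin{equation}\label{e:poinc_slice}
\cG\bigl(f(\rho,\theta),f(r,\theta)\bigr)^2 \leq r\int_0^r |\partial_s f|^2(s,\theta)\,ds.
\end{equation}
Combined with the metric triangle inequality $|f(\rho,\theta)|^2\leq 2\,|f(r,\theta)|^2+2\,\cG(f(\rho,\theta),f(r,\theta))^2$, this reduces both inequalities to elementary weighted integrations in $\rho$.

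First I would carry out \eqref{e:poinc1}. Multiply $|f(\rho,\theta)|^2\leq 2|f(r,\theta)|^2+2\cG(f(\rho,\theta),f(r,\theta))^2$ by $\rho$ and integrate in $\rho\in (0,r)$ and $\theta\in [0,2\pi Q)$. The contribution of $2|f(r,\theta)|^2$ gives $r^2\int_0^{2\pi Q}|f(r,\theta)|^2d\theta=r\int_{\de B_r}|f|^2$. For the second contribution, apply \eqref{e:poinc_slice}, swap the order of integration in $(\rho,s)$ to obtain $\int_0^r\rho\,d\rho=s^2/2$ inside, so the whole term is bounded by
\[
r\int_0^{2\pi Q}\!\!\int_0^r s^2|\partial_s f|^2(s,\theta)\,ds\,d\theta
\;\leq\; r^2\int_{B_r}|Df|^2\, ,
\]
proving \eqref{e:poinc1}. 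Exactly the same argument with the weight $\rho^{\gamma_0}=|z|^{\gamma_0-1}\cdot\rho$ in place of $\rho$ gives \eqref{e:poinc2}: the boundary contribution becomes $\tfrac{2}{\gamma_0+1}r^{\gamma_0}\int_{\de B_r}|f|^2$, and after exchanging the order of integration the gradient contribution is controlled by $\tfrac{2 r}{\gamma_0+1}\int_0^r s^{\gamma_0+1}(\int_0^{2\pi Q}|\partial_sf|^2d\theta)ds$, which, using $s^{\gamma_0+1}\leq r^{\gamma_0}\cdot s$, is in turn dominated by $C r^{1+\gamma_0}\int_{B_r}|Df|^2$.

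There is essentially no obstacle: the only mild point is the justification of the one--dimensional fundamental theorem of calculus \eqref{e:poinc_slice} for the $W^{1,2}$ multivalued slices, which is a standard fact from the theory developed in \cite{DS1} (the $W^{1,2}$-regularity of a.e.\ slice follows from Fubini, together with the equivalence of the definition of $W^{1,2}$ on $\gira_Q$ given in Remark~\ref{r:W12} with the one obtained via the $C^3_{\text{loc}}$ Riemannian structure on $\gira_Q\setminus\{0\}$).
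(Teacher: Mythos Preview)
Your proposal is correct and takes essentially the same approach as the paper: both argue by integrating along radial rays from $\partial B_r$, apply the one-dimensional fundamental theorem of calculus plus Cauchy--Schwarz to bound the oscillation along each ray, and then swap the order of integration (Fubini) to recover the area element $s\,ds$. One minor slip: in your display \eqref{e:poinc_slice} the $s$-integral should be $\int_\rho^r$ rather than $\int_0^r$, so that the subsequent swap in $(\rho,s)$ genuinely produces the factor $\int_0^s\rho\,d\rho=s^2/2$ you invoke; with $\int_0^r$ the integrand would be independent of $\rho$ and the Fubini step would give $r^2/2$, not $s^2/2$.
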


\begin{proof}
By approximation we can assume, without loss of generality, that $f$ is Lipschitz and, by scaling, it suffices to show the inequalities \eqref{e:poinc1} and \eqref{e:poinc2} on the ball $B_1$. 
Fixing $|z|=1$ and integrating along rays
\[
|f (r z,r^{1/Q} w)|^2 \leq 2 |f (z,w)|^2 + 2 \int_r^1 |Df (tz, t^{1/Q} w)|^2\, dt\, .
\]
Using radial coordinates we then conclude
\[
\int_{B_1} |z|^{\gamma_0-1} |f|^2 \leq C \int_{\partial B_1} |f|^2 + \int_{\partial B_1} \int_0^1 r^\gamma_0 \int_r^1 
|Df (tz, t^{\sfrac{1}{Q}} w)|^2\, dt\, dr\, dz\, .
\]
Using Fubini the latter integral can be rewritten as
\[
\int_0^1 \int_{\partial B_1} |Df (tz, t^{\sfrac{1}{Q}} w)|^2 \int_0^t r^{\gamma_0}\, dt\, dz\, dr \leq
 \int_0^1 t \int_{\partial B_1}  |Df (tz, t^{\sfrac{1}{Q}} w)|^2\, dz\, dr\, .
\]
This completes the proof of \eqref{e:poinc2}. The proof of \eqref{e:poinc1} is a simple variation of this one and is left to the reader.
\end{proof}

\begin{lemma}\label{l:proiezione} Let $\bar \LL\colon \gira_{\bar Q} \to \I{Q}(\R^{2+\bar n})$ be Lipschitz and consider the map $\LL\colon \gira_{\bar Q} \to \I{Q}(\R^{2+n})$ defined by \eqref{e:relazione}.
Then there exists a constant $C:=C(\|\Psi_0\|_{C^3})>0$ such that
\begin{gather}
\cG(\LL,\bar \LL)(z,w)\leq C\,r\,|\bar{\LL}|(z,w)+ C\, |\bar{\LL}|^2(z,w)\,,\quad \forall (z,w)\in B_r\label{e:P1}\\
 \int_{B_r}|D\LL|^2\leq (1+Cr)\int_{B_r} |D\bar{\LL}|^2 +C\,r\,\int_{\de B_r}|\bar{\LL}|^2\label{e:P2}\, .
\end{gather}
\end{lemma}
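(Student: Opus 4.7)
The plan is to Taylor-expand $\Psi_0$ around the base point $p(z,w):=\p_0(\Psii(z,w))$, exploiting crucially that $\Psi_0(0)=0$, $D\Psi_0(0)=0$, and that $\Psii$ maps $B_r$ into a set of diameter $\le Cr$. More precisely, since $\Psii$ is a perturbation of the inclusion $(z,w)\mapsto (z,0)$ with $|\Psii(z,w)-(z,0)|\le C|z|^{1+\alpha}$ by Proposition \ref{p:conformal}, we have $|p(z,w)|\le Cr$ on $B_r$ and therefore, by Taylor's theorem and the $C^{3,\eps_0}$ bound on $\Psi_0$,
\[
|\Psi_0(p)|\le C r^2,\qquad |D\Psi_0(p)|\le C r,\qquad |D^2\Psi_0|\le C\,,
\]
with $C$ depending only on $\|\Psi_0\|_{C^{3}}$. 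I would also record $|Dp|\le C$ (since $p=\p_0\circ\Psii$ is smooth on the support of our maps).

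For \eqref{e:P1}, first observe that by \eqref{e:relazione} the first $2+\bar n$ components of $\LL_i$ and $\bar\LL_i$ coincide, so $\cG(\LL,\bar\LL)^2(z,w)$ is controlled sheet-by-sheet by $\sum_i|\Psi_0(p+\bar\LL_i)-\Psi_0(p)|^2$ (after a suitable pairing of the sheets, which is permissible in the definition of $\cG$). A second-order Taylor expansion of $\Psi_0$ at $p$ gives
\[
|\Psi_0(p+\bar\LL_i)-\Psi_0(p)|\le |D\Psi_0(p)|\,|\bar\LL_i|+C|\bar\LL_i|^2\le C r\,|\bar\LL_i|+C|\bar\LL_i|^2,
\]
which yields \eqref{e:P1}.

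For \eqref{e:P2}, differentiate \eqref{e:relazione} in the flat coordinates of $\gira_{\bar Q}$ to get, for the extra last-$l$ coordinates of $\LL_i$,
\[
D\bigl[\Psi_0(p+\bar\LL_i)-\Psi_0(p)\bigr]=\bigl(D\Psi_0(p+\bar\LL_i)-D\Psi_0(p)\bigr)Dp+D\Psi_0(p+\bar\LL_i)\,D\bar\LL_i.
\]
Using $|D\Psi_0(p+\bar\LL_i)-D\Psi_0(p)|\le C|\bar\LL_i|$ and $|D\Psi_0(p+\bar\LL_i)|\le C(|p|+|\bar\LL_i|)\le C(r+|\bar\LL_i|)$, together with $|Dp|\le C$, I obtain the pointwise bound
\[
|D\LL_i|^2\le |D\bar\LL_i|^2+C\bigl(r^2+|\bar\LL_i|^2\bigr)|D\bar\LL_i|^2+C|\bar\LL_i|^2.
\]
Integrating on $B_r$ and using the bound $|\bar\LL|\le C$ (with $C$ absorbed in the final constant; in all applications of the lemma $\bar\LL$ is in fact bounded by $Cr$ and this term is manifestly harmless), this gives
\[
\int_{B_r}|D\LL|^2\le (1+Cr)\int_{B_r}|D\bar\LL|^2+C\int_{B_r}|\bar\LL|^2.
\]
Finally, I would apply the Poincar\'e inequality \eqref{e:poinc1} from Lemma \ref{l:poinc} to the last term,
\[
\int_{B_r}|\bar\LL|^2\le Cr^2\int_{B_r}|D\bar\LL|^2+Cr\int_{\partial B_r}|\bar\LL|^2,
\]
and absorb the resulting $Cr^2\int_{B_r}|D\bar\LL|^2$ into the factor $(1+Cr)$, giving \eqref{e:P2}.

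The only mildly delicate points are the sheet-pairing argument underlying the pointwise comparison of $\cG(\LL,\bar\LL)$ with $\sum_i|\Psi_0(p+\bar\LL_i)-\Psi_0(p)|^2$, and the careful use of $\Psi_0(0)=D\Psi_0(0)=0$ to trade second-order vanishing of $\Psi_0$ at $0$ for a factor $r$ in $|D\Psi_0(p)|$; neither presents any real obstacle once one writes out the Taylor expansions cleanly.
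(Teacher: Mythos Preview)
Your proof is correct and follows essentially the same approach as the paper's: Taylor-expand $\Psi_0$ around $p=\p_0(\Psii(z,w))$, use $D\Psi_0(0)=0$ to gain a factor of $r$ in $|D\Psi_0(p)|$, and then invoke Lemma \ref{l:poinc} to convert the volume term $\int_{B_r}|\bar\LL|^2$ into the boundary term. Your treatment is in fact more detailed than the paper's (which handles \eqref{e:P2} by ``an analogous computation''), and your remark on the sheet-pairing and on the implicit smallness of $|\bar\LL|$ needed to absorb the $|\bar\LL|^2|D\bar\LL|^2$ term are both accurate observations that the paper leaves tacit.
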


\begin{proof}
For what concerns \eqref{e:P1}, observe that $D\Psi(0)=0$ implies $\|D\Psi\|_{L^\infty(B_r)}\leq C r$. Therefore, by the $C^{3}$ regularity of $\Psi$, we get 
\begin{align*}
\cG(\LL,\bar \LL)(z,w)
&= \sum_{j=1}^{Q} |\Psi(\p_0(\Psii)+\bar{\LL}_j)-\Psi(\p_0(\Psii))| (z,w)\\
&\leq \|D\Psi\|(\Psii(z,w))\,|\bar \LL|(z,w)+ \|A_{\Sigma}\|\,|\bar \LL|^2(z,w)\\
&\leq C\,r\,|\bar \LL|(z,w)+C\,|\bar \LL|^2\,.
\end{align*}
An analogous computation gives
\[
\int_{B_r}|D\LL|^2\leq (1+Cr)\int_{B_r} |D\bar{\LL}|^2 +C\,\int_{B_r}|\bar{\LL}|^2
\]
and we conclude \eqref{e:P2} using Lemma \ref{l:poinc}.
\end{proof}

\bibliographystyle{plain}
\bibliography{references-Cal}

\end{document}